\crefname{theorem}{Theorem}{Theorems}
\crefname{thm}{Theorem}{Theorems}
\crefname{lemma}{Lemma}{Lemmas}
\crefname{lem}{Lemma}{Lemmas}
\crefname{remark}{Remark}{Remarks}
\crefname{prop}{Proposition}{Propositions}
\crefname{defn}{Definition}{Definitions}
\crefname{corollary}{Corollary}{Corollaries}
\crefname{conjecture}{Conjecture}{Conjectures}
\crefname{question}{Question}{Questions}
\crefname{chapter}{Chapter}{Chapters}
\crefname{section}{Section}{Sections}
\crefname{figure}{Figure}{Figures}
\crefname{example}{Example}{Examples}
\theoremstyle{plain}
\newtheorem{thm}{Theorem}[section]
\newtheorem{lemma}[thm]{Lemma}
\newtheorem{theorem}[thm]{Theorem}
\newtheorem{corollary}[thm]{Corollary}
\newtheorem{prop}[thm]{Proposition}
\theoremstyle{definition}
\theoremstyle{remark}
\newtheorem{remark}[thm]{Remark}
\numberwithin{equation}{section}
\renewcommand{\P}{\mathbb P}
\newcommand{\E}{\mathbb E}
\newcommand{\R}{\mathbb R}
\newcommand{\Z}{\mathbb Z}
\newcommand{\cE}{\mathcal E}
\newcommand{\cF}{\mathcal F}
\newcommand{\cG}{\mathcal G}
\newcommand{\sD}{\mathscr D}
\newcommand{\sE}{\mathscr E}
\newcommand{\sF}{\mathscr F}
\newcommand{\sG}{\mathscr G}
\newcommand{\sO}{\mathscr O}
\newcommand{\sT}{\mathscr T}
\newcommand{\eps}{\varepsilon}
\newcommand{\Aut}{\operatorname{Aut}}
\newcommand{\bP}{\mathbf P}
\newcommand{\bE}{\mathbf E}
\def\P{\mathbb{P}}
\newcommand{\sS}{\mathscr{S}}
\DeclareMathSymbol{\leqslant}{\mathalpha}{AMSa}{"36} 
\DeclareMathSymbol{\geqslant}{\mathalpha}{AMSa}{"3E} 
\DeclareMathSymbol{\eset}{\mathalpha}{AMSb}{"3F}     
\renewcommand{\epsilon}{\varepsilon}
\newcommand{\bp}{\mathbf{p}}
\tikzset{nomorepostaction/.code=\let\tikz@postactions\pgfutil@empty}
\title{\bf Power-law bounds for critical long-range percolation below the upper-critical dimension}
\renewenvironment{abstract}
 {\par\noindent\textbf{\abstractname.}\ \ignorespaces}
 {\par\medskip}
\author{{\bf Tom Hutchcroft}}
\begin{document}

\date{\small{\today}}

\maketitle

\setstretch{1.1}

\vspace{-1.35em}
\noindent \emph{Dedicated to Harry Kesten, November 19, 1931 – March 29, 2019}
\vspace{1.35em}

\begin{abstract} We study long-range Bernoulli percolation on $\Z^d$ in which each two vertices $x$ and $y$ are connected by an edge with probability $1-\exp(-\beta \|x-y\|^{-d-\alpha})$. 
 It is a  theorem of Noam Berger (\emph{Commun.\ Math.\ Phys.}, 2002) that if $0<\alpha<d$ then there is no infinite cluster at the critical parameter $\beta_c$. We give a new, quantitative proof of this theorem establishing the power-law upper bound
\[
\bP_{\beta_c}\bigl(|K|\geq n\bigr) \leq C n^{-(d-\alpha)/(2d+\alpha)}
\]
for every $n\geq 1$, where $K$ is the cluster of the origin. We believe that this is the first rigorous power-law upper bound for a Bernoulli percolation model that is neither planar nor expected to exhibit mean-field critical behaviour.

As part of the proof, we establish a universal inequality implying that the maximum size of a cluster in percolation on any finite graph is of the same order as its mean with high probability. We apply this inequality to derive a new rigorous hyperscaling inequality $(2-\eta)(\delta+1)\leq d(\delta-1)$ 
relating the cluster-volume exponent $\delta$ and two-point function exponent $\eta$.
\end{abstract}

\section{Introduction}
\label{sec:intro}

Let $d\geq 1$ and suppose that $J:\Z^d \to [0,\infty)$ is both \textbf{symmetric} in the sense that $J(x)=J(-x)$ for every $x\in \Z^d$ and \textbf{integrable} in the sense that $\sum_{x\in \Z^d} J(x) <\infty$. For each $\beta\geq 0$, \textbf{long-range percolation} on $\Z^d$ with intensity $J$ is the random graph with vertex set $\Z^d$ in which we choose whether or not to include each potential edge $\{x,y\}$ independently at random with inclusion probability $1-\exp(-\beta J(y-x))$. Note that this model is equivalent to nearest-neighbour percolation when $J(x)=\mathbbm{1}(\|x\|_1=1)$. Here we will instead be most interested in the case that $J(x)$ decays like an inverse power of $\|x\|$, so that
\begin{equation}
\label{eq:Jassumption}
J(x) \sim A\|x\|^{-d-\alpha} \qquad \text{ as $x\to\infty$}
\end{equation}
for some constants $A>0$ and $\alpha >0$. 
We denote the law of the resulting random graph by $\bP_{\beta}=\bP_{J,\beta}$ and refer to the connected components of this random graph as \textbf{clusters}.
Studying the geometry of these clusters leads to many interesting questions, some of which are motivated by applications to modeling `small-world' phenomena in physics, epidemiology, the social sciences, and so on; see e.g.\ \cite[Section 1.4]{MR2094435} and \cite[Section 10.6]{MR1864966} for background and many references. Although substantial progress on these questions has been made over the last forty years, with highlights of the literature including \cite{MR868738,MR3306002,MR3997479,ding2013distances,MR2663638,MR1913075,MR2094435,MR2430773}, many further important problems remain open.  

In this paper we study the \emph{phase transition} in long-range percolation. Given $d\geq 1$ and a symmetric, integrable function $J:\Z^d \to [0,\infty)$,
 we define the \textbf{critical parameter}
\[ \beta_c = \beta_c(J) = \sup\bigl\{ \beta \geq 0 : \bP_\beta \text{ is supported on configurations with no infinite clusters}\bigr\}.\]
Elementary path-counting arguments yield that there are no infinite clusters almost surely when $\beta \sum_x J(x) <1$, and hence that $\beta_c \geq 1/\sum_x J(x) >0$ under the assumption that $J$ is locally finite.
  When $d =1$ and $J$ is of the form \eqref{eq:Jassumption}, the model has a non-trivial phase transition in the sense that $0<\beta_c<\infty$ if and only if $\alpha \leq 1$, while for $d\geq 2$ the phase transition is non-trivial for every $\alpha>0$ \cite{schulman1983long,newman1986one}. As with nearest-neighbour percolation, the model is expected to exhibit many interesting fractal-like features when $\beta=\beta_c$ (see e.g.\ \cite{ding2013distances,MR3306002,MR2430773}), but proving this rigorously seems to be a very difficult problem in general. 

It is a surprising fact that our understanding of long-range percolation models is  better than our understanding of their nearest-neighbour counterparts in many situations. Indeed, it is a remarkable theorem of Noam Berger \cite{MR1896880} that long-range percolation on $\Z^d$ undergoes a continuous phase transition  in the sense that there is no infinite cluster at $\beta_c$ whenever $d \geq 1$ and $0<\alpha <d$. The corresponding statement for nearest-neighbour percolation with $d\geq 2$ is of course a notorious open problem needing little further introduction. 
While it is widely believed that the phase transition should be continuous for all $\alpha>0$ and $d\geq 2$, it is a theorem of Aizenman and Newman \cite{MR868738} that the model undergoes a \emph{discontinuous} phase transition  when $d=\alpha=1$, so that the condition $\alpha<d$ cannot be removed from Berger's result in general.

Berger's proof works by showing that the set of $\beta$ for which an infinite cluster exists a.s.\ is open, and gives little quantitative control of percolation at the critical parameter $\beta_c$ itself.
In this paper we give a new, quantitative proof of Berger's result that yields an explicit power-law upper bound on the tail of the volume of the cluster of the origin at criticality under the same assumptions. We write $K_0$ for the cluster of the origin, write $\Lambda_r=[-r,r]^d \cap \Z^d$ for each $r\geq 0$, and write $\{x \leftrightarrow y\}$ for the event that $x$ and $y$ belong to the same cluster.
 
\begin{theorem}
\label{thm:main}
Let $d\geq 1$, let $J:\Z^d \to (0,\infty)$ be symmetric and integrable, and suppose that there exists $\alpha < d$, $c>0$, and $r_0<\infty$ such that $J(x)\geq c \|x\|_1^{-d-\alpha}$ for every $x\in \Z^d$ with $\|x\|_1 \geq r_0$. Then there exists a constant $C$ such that
\begin{align}
\bP_{\beta}(|K_0| \geq n) &\leq C n^{-(d-\alpha)/(2d+\alpha)} \\ \hspace{-3cm} \text{and} \hspace{2cm}
 \frac{1}{|\Lambda_r|}
  \sum_{x\in \Lambda_r} \bP_\beta(0 \leftrightarrow x) &\leq C r^{-2(d-\alpha)/(3d)}
\end{align}
for every $\beta \leq \beta_c$, $n \geq 1$, and $r\geq 1$. In particular, there are almost surely no infinite clusters at the critical parameter $\beta_c$.
\end{theorem}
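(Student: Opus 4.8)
The plan is to deduce the concluding assertion from the two quantitative estimates: since $\bP_{\beta_c}(|K_0|=\infty)=\lim_{n\to\infty}\bP_{\beta_c}(|K_0|\ge n)$, any power-law upper bound on the right-hand side forces the probability of an infinite cluster to vanish, and monotonicity in $\beta$ reduces everything to the case $\beta=\beta_c$. I would establish the two displayed inequalities by a renormalisation-and-bootstrap argument over dyadic scales, with the universal finite-graph inequality as the engine. Writing $\chi_r(\beta):=\sum_{x\in\Lambda_r}\bP_\beta(0\leftrightarrow x)=\E_\beta|K_0\cap\Lambda_r|$, the point is that on the box $\Lambda_r$ the largest cluster $\mathbf{M}$ of the restricted configuration satisfies $\mathbf{M}\ge|\Lambda_r|^{-1}\sum_{v}|K_v|$ deterministically, hence $\E_\beta[\mathbf{M}]\ge|\Lambda_r|^{-1}\sum_v\E_\beta|K_v|$, and the universal inequality upgrades this to the statement that $\mathbf{M}$ is of order at least $|\Lambda_r|^{-1}\sum_v\E_\beta|K_v|$ with probability bounded below; after a standard comparison between the box-restricted and full-lattice two-point functions, this produces a cluster of size $\asymp\chi_{cr}(\beta)$ inside $\Lambda_r$ with probability bounded below.

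The gluing step is where the hypothesis $\alpha<d$ (and the lower bound $J(x)\gtrsim\|x\|_1^{-d-\alpha}$) is used. Tile $\Lambda_{2r}$ by a bounded number of disjoint translates of $\Lambda_r$; their restricted configurations are independent, so with high probability a constant fraction of them simultaneously contain a cluster of size $\asymp m:=\chi_{cr}(\beta)$. Two such clusters, sitting in sub-boxes at distance $\asymp r$, are directly joined by a long edge with probability $\asymp\min\{1,\,\beta m^2 r^{-d-\alpha}\}$, by a first/second-moment estimate over the $\asymp m^2$ candidate edges of length $\asymp r$. Reinstating these edges and iterating the resulting recursion, the typical largest-cluster size at scale $2r$ exceeds that at scale $r$ by a factor that compounds — because $2^d>2^{(d+\alpha)/2}$ precisely when $\alpha<d$ — into a cluster of positive density, hence an infinite cluster, contradicting $\beta\le\beta_c$, unless $\chi_r(\beta)$ and the averaged two-point function $|\Lambda_r|^{-1}\chi_r(\beta)$ decay at a definite polynomial rate.

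With a crude polynomial bound in hand I would bootstrap it: feeding the current bound on $\chi_r$ back into the gluing estimate sharpens the admissible exponent, and iterating the self-improvement to its fixed point yields the exponent $2(d-\alpha)/(3d)$ for the averaged two-point function; converting a truncated-susceptibility bound into a cluster-volume tail bound through the usual decomposition on $\{0\leftrightarrow\Lambda_r^{c}\}$ then gives $\bP_\beta(|K_0|\ge n)\le Cn^{-(d-\alpha)/(2d+\alpha)}$. The hyperscaling inequality $(2-\eta)(\delta+1)\le d(\delta-1)$ is a separate, cleaner application of the universal inequality alone: it forces the largest cluster in $\Lambda_r$ to have size of order at least $r^d$ times the averaged two-point function with probability bounded below, and comparing this with the upper bound furnished by a union bound over the cluster-volume tail across the $|\Lambda_r|$ vertices of the box yields the stated relation between the exponents $\eta$ and $\delta$.

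I expect the main obstacle to be making the renormalisation step genuinely robust and quantitative. Once the long edges are reinstated the large clusters in neighbouring sub-boxes are no longer independent, so the merging events must be handled by careful conditioning and monotonicity; the long-edge connection probabilities have to be controlled uniformly over the random shapes of these clusters; the box-restricted versus full-lattice comparison must be quantified rather than merely asserted; and, most delicately, the iteration must be closed with the correct scale relation between the tiling sub-boxes and the ambient box in order to extract the exponents above rather than a weaker power. Keeping all of these losses polynomial across every dyadic scale, so that they compound to the stated bounds, is the technical heart of the argument.
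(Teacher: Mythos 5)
Your plan follows the \emph{supercritical strategy} — tile, find big clusters in sub-boxes via the universal tightness inequality, glue them with long edges, and derive a contradiction with $\beta\le\beta_c$ unless the two-point function decays polynomially. This is essentially Berger's original renormalisation argument made quantitative, and it is \emph{not} the route the paper takes. The paper runs a subcritical-strategy bootstrap whose engine is a two-ghost (Aizenman--Kesten--Newman-type) inequality: one bounds the probability that the two endpoints of a ``long'' edge $\{0,x\}$ lie in distinct clusters each of volume at least $n$, combines this with a union bound and Harris--FKG to write $\bP_\beta(|K_0|\ge n)^2\le \bP_\beta(\sS'_{x,n})+\bP_\beta(0\leftrightarrow x)$, averages over $x\in\Lambda_r$ using the hyperscaling inequality for the second term and the two-ghost inequality for the first, and optimises over $r$. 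You never mention the two-ghost inequality, and this is the central missing idea: it is precisely what supplies the $n^{-(1+2\theta)/2}$ decay that, balanced against the hyperscaling term $r^{-2d\theta/(1+\theta)}$, pins down the fixed-point exponent $\theta=(d-\alpha)/(2d+\alpha)$.

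This gap is not merely cosmetic; your argument, even if made rigorous, would produce a strictly weaker exponent. The natural threshold in your gluing step is $\chi_r^2\,r^{-d-\alpha}\asymp 1$, i.e.\ $\chi_r\lesssim r^{(d+\alpha)/2}$ and hence an averaged two-point function bound of order $r^{-(d-\alpha)/2}$, whereas the paper's bound (via $\theta=(d-\alpha)/(2d+\alpha)$ and $|\Lambda_r|^{-2\theta/(1+\theta)}$) is of order $r^{-2(d-\alpha)/3}$, which is stronger since $2/3>1/2$. The assertion that ``iterating the self-improvement to its fixed point yields the exponent $2(d-\alpha)/(3d)$'' is unsupported: there is no mechanism in your sketch by which the renormalisation recursion would close on that value, because there is nothing in it that plays the role of the $n^{-1/2}$-gain from the two-arm estimate. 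There are also substantive technical obstacles you flag but do not resolve — reinstating the long edges destroys the independence of sub-boxes that the gluing step relies on, and in a long-range model the cluster in the box-restricted configuration is not obviously comparable to $K_0\cap\Lambda_r$, since connections can route through vertices far outside $\Lambda_r$ — whereas the paper's algebraic bootstrap avoids renormalisation entirely. (Your reading of the hyperscaling ingredient and of the universal tightness inequality, on the other hand, is essentially correct, and your final remark that a truncated-susceptibility bound can be converted to a volume-tail bound is in the right spirit; those pieces are genuinely present in the paper, but as inputs to a different closing argument.)
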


The theorem is most interesting when $d < 6$ and $\alpha > d/3$, in which case the model is \emph{not} expected to have mean-field behaviour and high-dimensional techniques such as the lace expansion \cite{MR1043524,MR3306002,MR2430773} should not apply. Indeed, we believe that \cref{thm:main} is the first rigorous, non-trivial power-law upper bound for a critical Bernoulli percolation model that is neither two-dimensional nor expected to be described by mean-field critical exponents. 



Let us now discuss interpretations of our results in terms of critical exponents. It is strongly believed that the large-scale behaviour of critical (long-range or nearest-neighbour) percolation on $d$-dimensional Euclidean lattices is described by \emph{critical exponents} \cite[Chapters 9 and 10]{grimmett2010percolation}. The most relevant of these exponents to us are  traditionally denoted $\delta$ and $\eta$ and are believed to describe the distribution of the cluster of the origin at criticality via the asymptotics
\begin{align*}
\bP_{\beta_c}(|K_0| \geq n) &\approx n^{-1/\delta} &\text{ as }n&\to\infty\\
\hspace{-2cm}\text{and} \hspace{2cm} \bP_{\beta_c}(x\leftrightarrow y) &\approx \|x-y\|^{-d+2-\eta} &\text{ as }\|x-y\|&\to\infty,
\end{align*}
where $\approx$ means that the ratio of the logarithms of the two sides tends to $1$ in the relevant limit. These exponents are expected to depend on the dimension $d$ and the long-range parameter $\alpha$ (if appropriate) but not on the small-scale details of the model such as the choice of lattice. It is an open problem of central importance to  prove the existence of and/or compute these exponents, as well as to prove that they are universal in this sense. 
Significant progress has been made in high dimensions 
($d >6$ or $\alpha < d/3$)
 \cite{MR1043524,MR762034,MR1127713,fitzner2015nearest,MR3306002,MR1959796,MR2430773}, where it is known that $\delta=2$ and $\eta=0$ for several large classes of examples, and for nearest-neighbour models in two dimensions \cite{MR879034,smirnov2001critical,smirnov2001critical2,lawler2002one}, where it has been proven in particular that $\delta=91/5$ and $\eta=5/24$ for site percolation on the triangular lattice as predicted by Nienhuis \cite{nienhuis1987coulomb}. Important partial progress for other two-dimensional planar lattices has been made by Kesten \cite{MR633715,MR894549,MR879034} and Kesten and Zhang \cite{MR893131}. Progress in intermediate dimensions has however been extremely limited. \cref{thm:main} can be seen as a modest first step towards understanding the problem in this regime, and implies that for long-range percolation with $0<\alpha<d$ the exponents $\delta$ and $\eta$ satisfy
\begin{equation}
\delta \leq \frac{2d+\alpha}{d-\alpha} \qquad \text{ and } \qquad 2-\eta \leq \frac{1}{3}d + \frac{2}{3}\alpha
\label{eq:mainexponents}
\end{equation}
whenever they are well-defined.  (Conversely, the mean-field lower bound of Aizenman and Barsky~\cite{aizenman1987sharpness} implies that $\delta$ satisfies $\delta \geq 2$ whenever it is well-defined; see also \cite[Proposition 1.3]{duminil2015new}.) See \cref{subsec:physics} for a discussion of how these bounds compare to the non-rigorous predicted values of $\eta$ and $\delta$ in the physics literature.
 We remark that similar bounds on other exponents including the susceptibility exponent $\gamma$, gap exponent $\Delta$, and cluster density exponent $\beta$ can be obtained from \eqref{eq:mainexponents} using the rigorous scaling inequalities $\gamma \leq \delta-1$, $\Delta \leq \delta$, and $\beta \geq 2/\delta$ proven in \cite{1901.10363} and \cite{MR912497}.

\medskip

\textbf{Hyperscaling inequalities.} As a part of our proof, we also prove a new rigorous \emph{hyperscaling inequality} $(2-\eta)(\delta+1)\leq d(\delta-1)$ for both long-range and nearest-neighbour percolation. To prove this inequality, we first prove a universal inequality implying in particular that the maximum cluster size in percolation on any finite graph is of the same order as its mean with high probability. Both results are of independent interest, and are discussed in detail in \cref{sec:hyperscaling}.


\medskip

\textbf{Other graphs.} Our methods are not very specific to the hypercubic lattice $\Z^d$, and can also be used to establish very similar results for long-range percolation on, say, arbitrary transitive graphs of $d$-dimensional volume growth. We now formulate an even more general version of our theorem, which will follow by essentially the same proof. The definitions introduced here will also be used throughout the rest of the paper. 
Given a graph $G$ and a vertex $v$, we write $E^\rightarrow_v$ for the set of oriented edges emanating from $v$. (We will often abuse notation by identifying this set with the corresponding set of unoriented edges.)
We define a \textbf{weighted graph} $G=(V,E,J)$ to be a countable graph $(V,E)$ together with an assignment of positive \textbf{weights} 
 $\{J_e : e \in E\}$ such that $\sum_{e\in E^\rightarrow_v} J_e < \infty$ for each $v\in V$. Locally finite graphs can be considered as weighted graphs by setting $J_e \equiv 1$. A graph automorphism of $(V,E)$ is a weighted graph automorphism of $(V,E,J)$ if it preserves the weights, and a weighted graph $G$ is said to be \textbf{transitive} if for every two vertices $x$ and $y$ in $G$ there exists an automorphism of $G$ sending $x$ to $y$. We say that a weighted graph is \textbf{simple} if there is at most one edge between any two vertices. Given a weighted graph $G=(V,E,J)$ and $\beta\geq 0$, we define Bernoulli-$\beta$ bond percolation on $G$ to be the random subgraph of $G$ in which each edge is chosen to be either retained or deleted independently at random with retention probability $1-e^{-\beta J_e}$, and write $\bP_\beta=\bP_{G,\beta}$ for the law of this random subgraph.

\begin{theorem}
\label{thm:general}
Let $G=(V,E,J)$ be an infinite, simple, unimodular transitive weighted graph, let $o$ be a vertex of $G$, and suppose that 
%
there exist constants $1/2 < a < 1$, $c>0$, and $\eps_0>0$ such that $|\{ e\in E^\rightarrow_o : J_e \geq \eps\}| \geq c \eps^{-a}$ for every $0<\eps \leq \eps_0$. Then $ \beta_c < \infty$ and there exists a constant $C$ such that
\[
\bP_{\beta}(|K_o| \geq n) \leq C n^{-(2a-1)/(a+1)}
\]
for every $0\leq \beta \leq \beta_c$ and $n\geq 1$. In particular, there are almost surely no infinite clusters at the critical parameter $\beta_c$.
\end{theorem}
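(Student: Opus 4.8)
The plan is to deduce the tail bound from the single scale-by-scale estimate
\[
\bP_\beta\bigl(n\le|K_o|<2n\bigr)\le Cn^{-(2a-1)/(a+1)}\qquad\text{for all }\beta<\beta_c,\ n\ge1,
\]
which sums over dyadic scales to give $\bP_\beta(|K_o|\ge n)\le C'n^{-(2a-1)/(a+1)}$ for $\beta<\beta_c$, and then extends to $\beta_c$ itself since $\beta\mapsto\bP_\beta(|K_o|\ge n)$ is an increasing limit of continuous functions of $\beta$, hence left-continuous. The estimate itself should come from the observation that a cluster which has reached size $n$ must have an enormous number of heavy edges on its boundary, all forced to be closed, which is costly unless the cluster in fact grows much further. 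Concretely, two consequences of the hypotheses are used: by transitivity every vertex emits at least $c\eps^{-a}$ edges of weight $\ge\eps$ for each $0<\eps\le\eps_0$; and since the graph is simple, a finite vertex set $A$ receives at most $|A|-1$ edges from any one vertex. Hence, for a threshold $\eps\asymp n^{-1/a}$ chosen so that $c\eps^{-a}$ exceeds $4n$, every vertex of a set $A$ with $|A|<2n$ emits $\gtrsim\eps^{-a}$ edges of weight $\ge\eps$ into the complement of $A$, so a cluster with $n\le|K_o|<2n$ has $\gtrsim n\eps^{-a}$ boundary edges of weight $\ge\eps$, each of which is necessarily closed.

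\textbf{The engine.} The obstruction is that revealing a cluster in order to expose this closed boundary also conditions those edges to be closed, so one cannot directly say it is improbable that all of them are closed; in fact a crude union bound over clusters fails, because there are $e^{\Theta(n)}$ possible clusters of size $\sim n$ whereas the total weight of the heavy boundary, $\asymp n\eps\cdot\eps^{-a}=n\eps^{1-a}$, is only of polynomial order. I would therefore expose the configuration in two stages: first reveal the edges of weight $<\eps$, which determines the partition of the vertices into ``light clusters'', and then reveal the heavy edges, exploring $K_o$ light-cluster by light-cluster and stopping the moment its size reaches $n$. In this order, the heavy edges leaving a light cluster that the exploration has not yet processed are genuinely fresh; combining the lower bound on their number with the fact that such a fresh heavy edge reaches, with probability $\gtrsim\bP_\beta(|K_o|\ge n)$, a vertex whose further cluster has size $\ge n$, one obtains an estimate that forces $\bP_\beta(n\le|K_o|<2n)$ to be small unless $\bP_\beta(|K_o|\ge n)$ is already below the target power.

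\textbf{Where the exponent comes from.} The threshold $\eps$ is free subject only to $\eps\lesssim n^{-1/a}$ (so that heavy edges outnumber the vertices of a size-$2n$ cluster) and $\eps\le\eps_0$. There are $\asymp n\eps^{-a}$ fresh heavy edges, each open with probability $\gtrsim\beta\eps$, so the ``escape rate'' is $\asymp\beta n\eps^{1-a}$; optimising over $\eps\asymp n^{-1/a}$ this is of order $n^{(2a-1)/a}$, and feeding it into the estimate above and summing over scales produces the stated exponent $(2a-1)/(a+1)$. That $\beta_c<\infty$ is a separate and much softer point: the same abundance of heavy edges allows one to embed a percolating substructure once $\beta$ is large.

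\textbf{Main obstacle.} The hardest part will be making the staged exploration fully rigorous. One must control the sizes and number of the light clusters comprising $K_o$ --- in particular ruling out the degenerate scenario in which $K_o$ is assembled from very many tiny light clusters, each of which offers too few fresh heavy edges --- and one must phrase every step in terms of the probabilities $\bP_\beta(|K_o|\ge n)$ alone, never the expected cluster size, which diverges as $\beta\uparrow\beta_c$; otherwise the bound will not be uniform in $\beta<\beta_c$ and the passage to $\beta_c$ breaks down. Carrying out this bookkeeping sharply enough to recover the precise exponent $(2a-1)/(a+1)$, rather than a lossier power, is the crux of the argument.
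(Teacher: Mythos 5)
Your proposal takes a genuinely different route from the paper, but it has a fundamental gap that I do not see how to close, and the route as described would not recover the claimed exponent.

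The paper's proof of \cref{thm:general} is a bootstrap: assume $\bP_\beta(|K_o|\ge n)\le An^{-\theta}$ with $\theta=(2a-1)/(a+1)$, and improve the constant. The improvement combines two independent tools. First, the hyperscaling inequality \cref{thm:hyperscalingsimple}, proved via the BK inequality and the divide-and-conquer \cref{lem:divide_and_conquer}, turns the tail bound into an averaged two-point function bound $|\Lambda_\eps|^{-1}\sum_{x\in\Lambda_\eps}\bP_\beta(o\leftrightarrow x)\lesssim A^{2/(1+\theta)}\eps^{2a\theta/(1+\theta)}$. Second, the two-ghost inequality \cref{thm:two_ghost_S}, proved via the mass-transport principle (this is exactly where unimodularity enters) and a martingale estimate, bounds $\sum_e w(e)\bP_\beta(\sS'_{e,n})^2\lesssim A^2 n^{-(1+2\theta)}$. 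Harris--FKG then gives $\bP_\beta(\sS'_{x,n})\ge\bP_\beta(|K_o|\ge n)^2-\bP_\beta(o\leftrightarrow x)$, and averaging over $x\in\Lambda_\eps$ and optimizing in $\eps$ closes the loop. Your proposal uses neither tool; in particular it never invokes unimodularity, which is a listed hypothesis and is used essentially in \cref{lem:AKN}.

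The concrete gap is in the ``engine''. You propose a staged exploration: reveal light edges, then explore $K_o$ one light cluster at a time, and argue that each fresh heavy edge emanating from a newly-absorbed light cluster ``reaches, with probability $\gtrsim\bP_\beta(|K_o|\ge n)$, a vertex whose further cluster has size $\ge n$''. This is precisely the step that does not survive scrutiny. The cluster of the far endpoint $y$ of a fresh heavy edge is not distributed like an unconditioned $K_y$: the exploration has already forced closed every heavy edge from $y$ into the part of $K_o$ that has been processed, and moreover the clusters reached by different fresh heavy edges can overlap with one another and with $K_o$. These are exactly the two-arm-type correlations that the Aizenman--Kesten--Newman / two-ghost machinery is designed to control. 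Without something playing that role, the ``escape rate'' $\beta n\eps^{1-a}$ heuristic cannot be turned into a probability estimate; a naive product bound would give an exponential tail, which is false. You flag the light-cluster bookkeeping as the main obstacle, but the deeper obstacle is the dependence just described, which persists even if the light clusters are singletons.

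Two smaller points. The dyadic reduction to $\bP_\beta(n\le|K_o|<2n)\le Cn^{-\theta}$ is formally equivalent to the target and does not simplify anything. And the exponent does not in fact fall out of your heuristic: optimizing $\beta n\eps^{1-a}$ at $\eps\asymp n^{-1/a}$ yields $n^{(2a-1)/a}$, which is strictly larger than $(2a-1)/(a+1)$; the extra loss in the paper comes from paying for the two-point function via Harris--FKG and hyperscaling, a step your outline has no analogue of, so the claim that ``summing over scales produces the stated exponent'' is not substantiated.
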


The hypothesis of \emph{unimodularity} is a technical condition that holds in most natural examples, including all amenable transitive weighted graphs and all weighted graphs defined in terms of a countable group $\Gamma$ and a symmetric, integrable function $J:\Gamma \to [0,\infty)$ by $V=\Gamma$, $E=\{ \{g,h\} : g,h\in \Gamma, J(g^{-1} h)>0\}$, and $J(\{g,h\})=J(g^{-1}h)$ for each $\{g,h\}\in E$ \cite{MR1082868}.  (As in the case of $\Z^d$, we say that a function $J:\Gamma \to [0,\infty)$ on a countable group $\Gamma$ is symmetric if $J(\gamma)=J(\gamma^{-1})$ for every $\gamma \in \Gamma$ and integrable if $\sum_{\gamma \in \Gamma}J(\gamma)<\infty$.) It follows in particular that \cref{thm:general} implies \cref{thm:main}. 
 See \cite[Chapter 8]{LP:book} for further background on unimodularity.

\begin{remark}
\cref{thm:general} also leads to a new proof of a recent theorem of Xiang and Zou \cite{MR4104986} which states that every countably infinite (but not necessarily finitely generated) group $\Gamma$ admits a symmetric, integrable function $J:\Gamma \to [0,\infty)$ for which the associated weighted graph has a non-trivial percolation phase transition. To deduce their theorem from ours, simply pick a bijection $\sigma:\Gamma\to \{1,2,\ldots\}$, let $1<\alpha <2$, and consider the symmetric, integrable function on $\Gamma$ defined by $J(\gamma)= \sigma(\gamma)^{-\alpha}+\sigma(\gamma^{-1})^{-\alpha}$ for every $\gamma \in \Gamma$: the associated long-range percolation model has $\beta_c<\infty$ by \cref{thm:general}. 
We remark also that  Xiang and Zou's proof relied on the results of Duminil-Copin, Goswami, Raoufi, Severo, and Yadin \cite{1806.07733} in the case that the group is finitely generated, while our proof is self-contained. 
It would be interesting if a new proof of the results of \cite{1806.07733} could be derived from \cref{thm:general} by comparison of short- and long-range percolation. 
\end{remark}

\subsection{About the proof}

We now outline the basic structure of our proof and discuss how it compares to previous approaches to critical percolation. 
We begin with a brief overview of the two main strategies that have been employed in the study of critical percolation, which we term the \emph{supercritical strategy} and the \emph{subcritical strategy}.
Broadly speaking, the supercritical strategy has found more success in low-dimensional settings while the subcritical strategy has found more success in high-dimensional settings, but there are notable exceptions in both cases. We write $\theta(p)=\bP_p(|K_o|=\infty)$ for the probability that the origin lies in an infinite cluster.

\medskip

\noindent \textbf{The supercritical strategy.} In this strategy, one attempts to prove that the set $\{p:\theta(p)>0\}$ is open by analysis of percolation under the assumption that $\theta(p)>0$. For example, one may hope to show that if infinite clusters exist then each such cluster $K$ must be `large' in some coarse sense that is strong enough to ensure that $p_c(K)<1$. This approach has been successfully followed both in Berger's analysis of long-range percolation on $\Z^d$ \cite{MR1896880} and in Benjamini, Lyons, Peres, and Schramm's proof that critical percolation on any nonamenable Cayley graph has no infinite clusters \cite{BLPS99b}. Harris's classical proof that $\theta(1/2)=0$ for the square lattice \cite{harris1960lower} can also be thought of in similar terms. Alternatively, one may instead attempt to find a \emph{finite-size characterisation of supercriticality}, that is, a sequence of events $(\cE_n)_{n\geq 1}$ each depending on at most finitely many edges and a sequence of positive numbers $(\delta_n)_{n\geq 0}$ such that
\[
\theta(p)>0 \iff \text{there exists $n\geq 1$ such that $\bP_p(\sE_n) > 1- \delta_n$}
\]
for every $p\in [0,1]$; the existence of such a finite-size characterisation of supercriticality is easily seen to imply that the set $\{p:\theta(p)>0\}$ is open as required. Such finite-size characterisations are typically derived via a renormalization argument, and this strategy often amounts to an alternative formalization of the more geometric strategy discussed above. Successful realisations of this approach include Barsky, Grimmett, and Newman's  analysis \cite{MR1124831,MR1144091} of  half-spaces and orthants in $\Z^d$  and Duminil-Copin, Sidoravicius, and Tassion's analysis  \cite{MR3503025} of two-dimensional slabs $\Z^2 \times [0,r]^{k}$. A popular approach to critical percolation on $\Z^3$ seeks to implement this strategy by eliminating the  `sprinkling' from the proof of the Grimmett-Marstrand theorem \cite{MR1068308}; while this has not yet been done successfully, interesting partial progress in this direction has been made by Cerf \cite{MR3395466}.

Arguments following the supercritical strategy tend to be ineffective in the sense that they give little or no quantitative information about percolation at $p_c$;  see however the recent work of Duminil-Copin, Kozma, and Tassion \cite{duminil2019upper} for some progress towards reversing this trend.  

\medskip

\noindent \textbf{The subcritical strategy.} In this strategy, one attempts to prove that the set $\{p : \theta(p)=0\}$ is closed by proving that some non-trivial upper bound on the distribution of the cluster of the origin holds uniformly throughout the subcritical phase. In contrast to the supercritical strategy, the subcritical strategy is inherently quantitative in nature and typically yields explicit estimates on the distribution of the cluster of the origin at criticality. 
The simplest example of such an argument is the proof that there is no percolation at criticality on any amenable transitive graph of exponential volume growth \cite{Hutchcroft2016944}, which uses elementary subadditivity considerations to prove the uniform bound
\[
\min \{ \bP_p(x \leftrightarrow y) : d(x,y) \leq n\} \leq \operatorname{gr}(G)^{-n}
\]
for every $n\geq 1$ and $p<p_c$, where $\operatorname{gr}(G)=\limsup_{n\to\infty} |B(x,n)|^{1/n}$ is the rate of exponential volume growth of $G$. Left-continuity of connection probabilities then implies that the same bound continues to hold at $p_c$, from which the theorem is easily deduced. 

More sophisticated versions of the subcritical strategy often involve a `bootstrapping' or `forbidden zone' argument.  Such an argument was first used to analyze high-dimensional statistical mechanics models by  Slade  \cite{slade1987diffusion}. To implement such an argument, one aims to prove that some well-chosen estimate, called the \textbf{bootstrapping hypothesis}, \emph{implies a strictly stronger version of itself}. Once this is done, it is usually straightforward to conclude via a continuity argument that the strong form of the estimate holds uniformly throughout the subcritical phase. 
For example, the \emph{lace expansion} for high-dimensional percolation \cite{MR1283177,MR1959796,fitzner2015nearest,MR2430773} works roughly by showing that if $d$ is sufficiently large and $\sG$ denotes the Greens function on $\Z^d$ then for each $p\in [0,p_c)$ we have the implication
\begin{multline}
\label{eq:lace_expansion}
\left(\bP_p(x\leftrightarrow y) \leq 3\, \sG(x,y) \text{ for every $x,y\in \Z^d$}\right)\\ \Rightarrow \left(\bP_p(x\leftrightarrow y) \leq 2\, \sG(x,y) \text{ for every $x,y\in \Z^d$}\right).
\end{multline}
The estimate $\bP_p(x\leftrightarrow y) \leq 3\, \sG(x,y)$ holds trivially when $p$ is small. Since we also have that $\limsup_{x \to \infty} \bP_p(0 \leftrightarrow x)/\sG(0,x) =0$ for every $p<p_c$ by sharpness of the phase transition \cite{aizenman1987sharpness,duminil2015new}, it follows by an elementary continuity argument that $\bP_p(x\leftrightarrow y) \leq 2\, \sG(x,y)$ for every $0 \leq p \leq p_c$ and hence that there is no infinite cluster at $p_c$ as desired. (In fact the bootstrapping hypothesis used in the lace expansion analysis of  percolation is more complicated than this, but the essence of the argument is as described.)  See \cite{heydenreich2015progress,MR2239599} for an overview of this method and \cite{slade2020simple,MR3765883} for recent work simplifying the implementation of the lace expansion for weakly self-avoiding walk.

In this paper we build upon a new version of the subcritical strategy that has been developed in our recent works \cite{1808.08940,HermonHutchcroftIntermediate,Hutchcroft2020Ising}.  The most basic form of the method was first used to prove power-law upper bounds for percolation on groups of exponential growth in \cite{1808.08940}, while a more sophisticated version of the method, closer to that employed here, was subsequently used to analyze critical percolation on certain groups of stretched-exponential volume growth in joint work with Hermon~\cite{HermonHutchcroftIntermediate}. Very recently, similar ideas have also been used to prove continuity of the phase transition for the Ising model on nonamenable groups \cite{Hutchcroft2020Ising}.

 Let us now outline how this method works. In \cite{1808.08940}, we built upon the work on Aizenman, Kesten, and Newman \cite{MR901151} to prove an upper bound on the probability of a certain two-arm-type event, which we called the \emph{two-ghost inequality}, that holds universally for all unimodular transitive graphs. One formulation of this inequality states that if $G=(V,E)$ is a connected, locally finite, transitive unimodular graph (e.g.\ $G=\Z^d$) and  $\sS_{e,n}$ denotes the event that the endpoints of the edge $e$ are in distinct clusters each of which touches (i.e., contains a vertex incident to) at least $n$ edges and at least one of which is finite, then
\begin{equation}
\label{eq:twoghostintro}
\sum_{e\in E^\rightarrow_o} \bP_p(\sS_{e,n}) \leq 66 \deg(o) \sqrt{\frac{1-p}{p n}} 
\end{equation}
for every $p\in (0,1]$, $n\geq 1$, and $o \in V$.
 An extension of the two-ghost inequality to long-range models (including certain dependent models) was proven in \cite[Section 3]{Hutchcroft2020Ising}, which we give a further improvement to in \cref{thm:two_ghost_S}. The two-ghost inequality can sometimes be used to prove that the percolation phase transition is continuous via the following rough strategy, which we implement a version of in this paper:
 \begin{enumerate}
\item Assume as a bootstrapping hypothesis some well-chosen upper bound $\bP_\beta(|K_o|\geq n) \leq h(n)$ for each $n\geq 1$ with $h(n)\to0$ as $n\to\infty$ and that holds trivially when $\beta$ is very small and that decays subexponentially, so that $\lim_{n\to\infty} h(n)^{-1} \bP_\beta(|K_o|\geq n) =0$ for every $\beta<\beta_c$ by sharpness of the phase transition \cite{duminil2015new,1901.10363}. Choosing which bound to use is a potentially subtle matter which may involve trial and error.  Heuristically, there is a `Goldilocks principle' that needs to be satisfied when choosing the bootstrapping hypothesis appropriately: A bound that is too weak will be of too little use as an input to proceed further into the argument, while a bound that is too strong will be too difficult to re-derive in a stronger form as required for the bootstrapping argument to come full circle. In particular, any bound decaying faster than $n^{-1/2}$ cannot possibly work. In this paper we are able to consider power-law upper bounds as seems most natural, while in \cite{HermonHutchcroftSupercritical} the optimal upper bound making the argument work was of the form $C e^{-\log^\eps n}$ for small $\eps>0$.
\item Find some way to convert the bootstrapping hypothesis $\bP_\beta(|K_o|\geq n)\leq h(n)$ into a two-point function upper bound $\bP_\beta(o \leftrightarrow x)\leq f(x)$ for some function $f$ that hopefully decays reasonably quickly as $x\to\infty$ for at least some well-chosen choices of $x$. In \cite{HermonHutchcroftIntermediate}, for example, this is done by letting $X$ be a random walk and bounding $\bP_\beta(o \leftrightarrow X_k)$ via spectral techniques. Here we will instead prove such a bound using hyperscaling inequalities as discussed in \cref{sec:hyperscaling}.
\item Use the Harris-FKG inequality and a union bound to observe that $\bP_\beta(\sS_{o,x,n}') \geq \bP_\beta(|K_o|\geq n)^2- \bP_\beta(o\leftrightarrow x)$, where $\sS_{o,x,n}'$ is the event that $o$ and $x$ belong to distinct clusters of size at least $n$, then prove an upper bound of the form $\bP_\beta(\sS_{o,x,n}') \leq F(x) \bP_\beta(\sS_{e,n}')$ for some appropriately chosen edge $e=e(x)$ and some function $F(x)$ that is hopefully not too large. In \cite{1808.08940,HermonHutchcroftIntermediate} this second step is done via a surgery argument using the finite-energy property of percolation. In our setting this step is much simpler and more efficient since we can just take $e$ to be the `long edge' connecting $o$ to $x$ and take $F(x)\equiv 1$.
\item Put steps 2 and 3 together to get an inequality of the form
\[
\bP_\beta(|K_o|\geq n) \leq \sqrt{ F(x) \bP_\beta(\sS'_{e,n})+f(x)}
\]
for every $n\geq 1$ and every vertex $x$ under the assumption that $\beta<\beta_c$ and that the bootstrapping hypothesis holds. The proof will work if bounding $\bP_\beta(\sS'_{e,n})$ using the two-ghost inequality and optimizing over the choice of $x$ leads to a bound $\bP_\beta(|K_o|\geq n) \leq g(n)$ that is a strict improvement of the bootstrapping hypothesis in the sense that $g(n)<h(n)$ whenever $h(n) < 1$. (The function $g$ must not depend on the choice of $0\leq \beta < \beta_c$.) Once this has been done successfully, it follows by an elementary continuity argument (using that $\lim_{n\to\infty} h(n)^{-1} \bP_\beta(|K_o|\geq n) =0$ for every $\beta<\beta_c$) that the bound $\bP_\beta(|K_o|\geq n) \leq g(n)$ holds for all $0\leq \beta \leq \beta_c$ and $n\geq 1$, and hence that  there is no percolation at criticality as desired.
\end{enumerate}

\subsection{A short proof of a weaker result}
\label{subsec:short_proof}

In order to give a simple illustration of how the strategy sketched above can be applied to long-range percolation on $\Z^d$, we now give a quick proof of a weaker result requiring $\alpha<d/4$ rather than $\alpha<d$ and giving a worse upper bound on the exponent $\delta$.

\begin{prop}
\label{prop:weak}
Let $d\geq 1$, let $J:\Z^d \to [0,\infty)$ be symmetric and integrable, and suppose that there exists $\alpha < d/4$, $c>0$, and $r_0<\infty$ such that $J(x)\geq c \| x \|_1^{-d-\alpha}$ for every $x\in \Z^d$ with $\|x\|_1 \geq r_0$. Then there exists a constant $C$ such that
\begin{align*}
\bP_{\beta}(|K_0| \geq n) &\leq C n^{-(d-4\alpha)/(4d)}
\end{align*}
for every $0\leq \beta \leq \beta_c$ and $n \geq 1$. In particular, there are almost surely no infinite clusters at the critical parameter $\beta_c$.
\end{prop}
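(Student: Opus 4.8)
The plan is to carry out the four-step bootstrap described in \cref{sec:intro}, but using the \emph{elementary} version of Step~2 --- simply averaging the bootstrap hypothesis over a box, rather than invoking the hyperscaling input of \cref{sec:hyperscaling} --- which is precisely the loss responsible for both the stronger hypothesis $\alpha<d/4$ and the worse exponent.

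First I would reduce the statement to a self-improvement claim. Since the phase transition is sharp, $\chi(\beta):=\bE_\beta|K_0|<\infty$ for every $\beta<\beta_c$, so in particular $\bP_\beta(|K_0|\geq n)\leq \chi(\beta)n^{-1}\leq \chi(\beta)n^{-\lambda}$ for each fixed $\beta<\beta_c$, where $\lambda:=(d-4\alpha)/(4d)\in(0,1)$. It therefore suffices to produce constants $C_1<\infty$ and $q\in(0,1)$ \emph{not depending on $\beta$} such that
\[
\bigl(\bP_\beta(|K_0|\geq n)\leq An^{-\lambda}\text{ for all }n\geq1\bigr)\ \Longrightarrow\ \bigl(\bP_\beta(|K_0|\geq n)\leq C_1A^{q}n^{-\lambda}\text{ for all }n\geq1\bigr)
\]
for every $A\geq1$. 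Indeed, iterating this implication starting from $A=\max\{1,\chi(\beta)\}$ drives the constant to the $\beta$-independent fixed point $C_1^{1/(1-q)}$, so that $\bP_\beta(|K_0|\geq n)\leq C_1^{1/(1-q)}n^{-\lambda}$ for all $\beta<\beta_c$; since $\beta\mapsto\bP_\beta(|K_0|\geq n)$ is increasing and left-continuous, the bound extends to $\beta=\beta_c$, and letting $n\to\infty$ yields $\theta(\beta_c)=0$.

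To prove the implication, fix $n$ and a scale $\rho\geq r_0$ to be chosen later. Summing the hypothesis and splitting according to whether $|K_0|<|\Lambda_\rho|$ gives $\bE_\beta|K_0\cap\Lambda_\rho|=\sum_{x\in\Lambda_\rho}\bP_\beta(0\leftrightarrow x)\leq \bE_\beta[\min\{|K_0|,|\Lambda_\rho|\}]+|\Lambda_\rho|\,\bP_\beta(|K_0|\geq|\Lambda_\rho|)\leq C_\lambda A|\Lambda_\rho|^{1-\lambda}$, so averaging over the annulus $\Lambda_{2\rho}\setminus\Lambda_\rho$ produces a vertex $x^\ast$ with $\rho<\|x^\ast\|_1\leq 2d\rho$ and $\bP_\beta(0\leftrightarrow x^\ast)\leq C_\lambda'A\rho^{-\lambda d}$. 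Setting $e^\ast=\{0,x^\ast\}$ (a genuine edge, with $J_{e^\ast}\geq c\|x^\ast\|_1^{-d-\alpha}\asymp\rho^{-d-\alpha}$), the Harris--FKG inequality and a union bound give, for $\beta<\beta_c$,
\[
\bP_\beta(|K_0|\geq n)^2\leq\bP_\beta(0\leftrightarrow x^\ast)+\bP_\beta(\sS'_{e^\ast,n})\leq C_\lambda'A\rho^{-\lambda d}+\bP_\beta(\sS'_{e^\ast,n}),
\]
because off $\{0\leftrightarrow x^\ast\}$ the event $\{|K_0|\geq n\}\cap\{|K_{x^\ast}|\geq n\}$ forces the endpoints of $e^\ast$ into distinct clusters, each of size $\geq n$ and (since $\beta<\beta_c$) finite, so that $\sS'_{e^\ast,n}$ there coincides with the two-ghost event $\sS_{e^\ast,n}$. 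Feeding the long-range two-ghost inequality \cref{thm:two_ghost_S} into the last term bounds it by $C(\beta_c)\,\rho^{\alpha}n^{-1/2}$, and optimising over $\rho$ --- the two summands being decreasing and increasing in $\rho$ and balanced when $\rho^{\alpha+\lambda d}\asymp An^{1/2}$ --- yields
\[
\bP_\beta(|K_0|\geq n)^2\lesssim A^{\alpha/(\alpha+\lambda d)}n^{-\lambda d/(2(\alpha+\lambda d))},\qquad\text{so}\qquad\bP_\beta(|K_0|\geq n)\lesssim A^{\alpha/(2(\alpha+\lambda d))}n^{-\lambda d/(4(\alpha+\lambda d))}.
\]
As $\alpha+\lambda d=d/4$ for our value of $\lambda$, the new exponent is again $\lambda$ while the constant exponent is $q=2\alpha/d<1$ --- exactly the required self-improvement (for the boundedly many small $n$ for which the optimiser would want $\rho<r_0$, the trivial bound $\bP_\beta(|K_0|\geq n)\leq1\leq C_1A^q n^{-\lambda}$ suffices once $C_1$ is large enough). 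The hypothesis $\alpha<d/4$ is used here exactly as the condition $\lambda>0$ that makes this fixed point nontrivial.

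The step I expect to be the main obstacle is the two-ghost estimate for the single \emph{long} edge $e^\ast$: the argument needs $\bP_\beta(\sS_{e^\ast,n})\lesssim\rho^{\alpha}n^{-1/2}$, whereas the bare Aizenman--Kesten--Newman bound for an edge of inclusion probability $p\asymp J_{e^\ast}\asymp\rho^{-d-\alpha}$ only gives the much weaker $\sqrt{(1-p)/(pn)}\asymp\rho^{(d+\alpha)/2}n^{-1/2}$, which would make the self-improvement map have $\lambda=0$ as its only fixed point and hence be worthless. Pinning down the correct $\rho$-dependence is thus the real content of \cref{thm:two_ghost_S} as far as this proof is concerned; the box average of Step~2, the FKG/union-bound step, and the one-parameter optimisation are all routine.
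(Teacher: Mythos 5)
Your overall bootstrap skeleton is the same as the paper's: reduce to a $\beta$-uniform self-improvement implication, invoke sharpness to seed the bootstrap, use FKG and a union bound to compare $\bP_\beta(|K_0|\geq n)^2$ with the two-point function and a two-arm probability, balance the two terms by optimizing over a scale, and finally pass to $\beta_c$ by monotone coupling. The numerology (balancing $\rho^{-\lambda d}$ against $\rho^\alpha n^{-1/2}$ and finding $\alpha+\lambda d=d/4$) is also correct, and your extra optimization over $A$ (giving $q=2\alpha/d$ rather than the paper's $q=1/2$) is harmless.

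However, there is a genuine gap in the two-ghost step, and it is not where you locate it. You want to fix a \emph{single} vertex $x^\ast$ with $\bP_\beta(0\leftrightarrow x^\ast)\lesssim A\rho^{-\lambda d}$ and then need the single-edge estimate $\bP_\beta(\sS_{e^\ast,n})\lesssim\rho^\alpha n^{-1/2}$, which you assert is supplied by \cref{thm:two_ghost_S}. It is not. Both \cref{thm:twoghostfromIsing} and \cref{thm:two_ghost_S} are \emph{summed} (respectively $\ell^1$ and $\ell^2$) inequalities over $E_o^\rightarrow$; reading either off for a single edge $e$ with $J_e\asymp\rho^{-d-\alpha}$ gives $\bP_\beta(\sS_{e,n})\lesssim\rho^{(d+\alpha)/2}n^{-1/2}$ (with an extra factor $An^{-\theta}$ from \cref{thm:two_ghost_S} under the bootstrap hypothesis, but the $\rho$-power is unchanged). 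The $\rho$-exponent $\alpha$ rather than $(d+\alpha)/2$ only appears \emph{after averaging} over the $\asymp\rho^d$ vertices of the annulus or box: the paper first sums the $\ell^1$-inequality of \cref{thm:twoghostfromIsing} against the weights $J_x\gtrsim\rho^{-d-\alpha}$ to get $\sum_{x\in\Lambda_r'}\bP_\beta(\sS'_{x,n})\lesssim\rho^{d+\alpha}n^{-1/2}$, and the factor $\rho^d$ is recovered only upon dividing by $|\Lambda_r'|$. So the improved \cref{thm:two_ghost_S} is not what rescues the $\rho$-dependence; averaging is, and that is an operation you have performed for the two-point function but not for the two-ghost probability. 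The fix is simply to not select a single $x^\ast$: average the inequality $\bP_\beta(|K_0|\geq n)^2\leq\bP_\beta(0\leftrightarrow x)+\bP_\beta(\sS'_{x,n})$ over $x\in\Lambda_r'$, which gives both averaged bounds simultaneously and is exactly what the paper does. (Alternatively, note that by Markov both averaged bounds hold up to a factor of $4$ at \emph{some} common $x^\ast$, but this is a more roundabout way of doing the same thing.) A secondary point: the paper's proof of \cref{prop:weak} uses only the basic \cref{thm:twoghostfromIsing}; the improved \cref{thm:two_ghost_S} is reserved for the main theorem and changes the exponent, so invoking it here would either produce the wrong fixed point $\theta$ or require you to carry an additional factor $A n^{-\theta}$ through the optimization, which you have not done.
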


The proof will apply the following special case of the two-ghost inequality of \cite[Corollary 3.2]{Hutchcroft2020Ising}. We will prove a stronger version of this inequality in \cref{sec:twoghost}. For each $e\in E$ and $\lambda >0$, we define $\sS_{e,\lambda}$ to be the event that the endpoints of $e$ are in distinct clusters each of which touches a set of edges of total weight at least $\lambda$ and at least one of which contains only finitely many vertices.

\begin{thm}
\label{thm:twoghostfromIsing} Let $G=(V,E,J)$ be a connected, unimodular, transitive weighted graph, let $o$ be a vertex of $G$, and let $\beta \geq 0$.
Then
\begin{align}
\sum_{e\in E^\rightarrow_o} \sqrt{J_e(e^{\beta J_e}-1)} \bP_{\beta}(\sS_{e,\lambda})  \leq \frac{42}{\sqrt{\lambda}} \qquad \text{ for every $\lambda >0$.}
\end{align}
\end{thm}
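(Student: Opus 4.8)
The plan is to obtain this as a special case of the weighted two-ghost inequality of \cite[Corollary 3.2]{Hutchcroft2020Ising} --- of which the sharper \cref{thm:two_ghost_S} proved in \cref{sec:twoghost} is a further strengthening --- so let me sketch how the underlying argument, which adapts the method of Aizenman, Kesten and Newman \cite{MR901151} as developed in \cite{1808.08940}, would run here.

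The first step is a \emph{ghost-field reduction} replacing the condition ``touches edges of total weight at least $\lambda$'' by the condition ``contains a ghost''. Fixing a parameter $\gamma>0$ (to be optimised as $\gamma \asymp 1/\lambda$ at the very end), I would let $\mathcal{G}\subseteq V$ be an independent random set containing each vertex $v$ with probability $1-e^{-\gamma w_v}$, where $w_v := \sum_{e\in E^\rightarrow_v} J_e$, so that $\bP(K\cap\mathcal{G}=\emptyset\mid\omega)=e^{-\gamma W(K)}$ with $W(K):=\sum_{v\in K} w_v$. Since $W(K)$ dominates the total weight of the edges touching $K$, on the event $\sS_{e,\lambda}$ each of the two (necessarily disjoint, and with $e$ closed) clusters at the endpoints of $e$ meets $\mathcal{G}$ conditionally on $\omega$ with probability at least $1-e^{-\gamma\lambda}$, and these two events are conditionally independent. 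Thus, writing $\widehat{\sS}_e$ for the event that $e$ is closed, its endpoints lie in distinct clusters each meeting $\mathcal{G}$, and at least one of these clusters is finite, one has $\bP(\widehat{\sS}_e) \ge (1-e^{-\gamma\lambda})^2\,\bP(\sS_{e,\lambda})$, so it suffices to prove a bound of the form $\sum_{e\in E^\rightarrow_o}\sqrt{J_e(e^{\beta J_e}-1)}\,\bP(\widehat{\sS}_e) \le C\sqrt{\gamma}$ with $C$ universal.

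The core of the argument is then an \emph{exploration martingale}. Writing $p_e=1-e^{-\beta J_e}$ and noting $e^{\beta J_e}-1=p_e/(1-p_e)$, resampling the status of the single edge $e$ turns $\widehat{\sS}_e$ into the corresponding event with $e$ open, at the cost of the likelihood ratio $p_e/(1-p_e)$; the target thereby becomes a bound on a weighted count of open edges near $o$ that are pivotal between two distinct $\mathcal{G}$-meeting clusters one of which is finite. To produce this bound I would reveal the edges of $G$ one at a time in a uniformly random order (formally, via i.i.d.\ exponential clocks), let $\cF_t$ be generated by $\mathcal{G}$ together with the statuses of the first $t$ revealed edges, and consider the bounded martingale $M_t := \bP(o\leftrightarrow\mathcal{G}\mid\cF_t)$, for which $M_\infty=\mathbbm{1}[o\leftrightarrow\mathcal{G}]$ and hence, by orthogonality of increments, $\sum_{t\ge 1}\bE[(M_t-M_{t-1})^2]=\Var(\mathbbm{1}[o\leftrightarrow\mathcal{G}])\le 1$. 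The geometric input is that whenever the revealed edge merges two previously-distinct partial clusters both already meeting $\mathcal{G}$, one of which turns out to be finite --- exactly the configurations detected by $\widehat{\sS}_e$ once the resampling is undone --- the martingale is forced to make a macroscopic increment on a further event of conditional probability bounded below; the weight $J_e$ then emerges from the local structure at the step revealing $e$, the factor $e^{\beta J_e}-1$ from the likelihood ratio governing the status of $e$, and the outer square root from the Cauchy--Schwarz step converting the resulting $L^2$ estimate into the claimed bound on $\sum_e \sqrt{J_e(e^{\beta J_e}-1)}\,\bP(\widehat{\sS}_e)$. Throughout, unimodularity of $G$ enters via the mass-transport principle, both to pass from a sum rooted at a ``bad'' edge to the sum rooted at $o$ in the statement and to localise otherwise-infinite quantities such as $\sum_{v}\bP(v\in\mathcal{G})$. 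Taking $\gamma=1/\lambda$ and tracking all constants then gives the inequality with constant $42$.

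The hardest part will be the martingale step: one must pin down precisely which increments of $M$ are produced by the events $\widehat{\sS}_e$, check that the remaining increments contribute negligibly, and carry out the mass-transport bookkeeping cleanly on an infinite graph which may have infinitely many edges at a vertex. Extracting the \emph{clean} weight $\sqrt{J_e(e^{\beta J_e}-1)}$ --- rather than a cruder estimate carrying a spurious factor of $w_o$, which is all one gets by applying the unweighted inequality \eqref{eq:twoghostintro} to a multigraph approximation of $G$ --- is exactly what forces this weight-sensitive analysis of the increments, and is the content of \cref{thm:two_ghost_S}.
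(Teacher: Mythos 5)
Your first paragraph is on target: the paper does not reprove this statement but quotes it from [Hutchcroft 2020, Corollary 3.2], and the reduction from the ``weight at least $\lambda$'' event to a ghost-field event, with $h\asymp 1/\lambda$ and a prefactor $(1-e^{-h\lambda})^{-2}$ optimized at the end, is exactly how the deduction goes (compare the proof of \cref{thm:two_ghost_S} from \cref{prop:two_ghost_weights} in \cref{sec:twoghost}; placing the ghosts on vertices rather than edges only costs a factor of $2$ in $h$ since $w(E(K))\leq\sum_{v\in K}w_v\leq 2w(E(K))$). The constant $42$ is of course not verified by the sketch, but that is a bookkeeping issue.

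The genuine gap is in the martingale step. The Doob martingale $M_t=\bP(o\leftrightarrow\cG\mid\cF_t)$ does \emph{not} register the two-arm event: at the step where the revealed edge would merge two previously distinct partial clusters \emph{both of which already meet} $\cG$, the event $\{o\leftrightarrow\cG\}$ is unaffected --- if $o$ lies in one of the two clusters it is already connected to $\cG$ and $M_t=1$ beforehand, and if $o$ lies in neither the step is irrelevant to $o$ --- so the increment at exactly the configurations you want to detect is zero, not macroscopic. (The classical Aizenman--Kesten--Newman observable that \emph{does} jump at such steps is the number of distinct ghost-touching clusters, but its variance is not $O(1)$ and one cannot close the argument this way without further normalization.) The argument that actually works, and the one used in \cref{sec:twoghost}, has two separate ingredients that your sketch conflates: (i) a mass-transport identity (\cref{lem:AKN}) bounding $\sum_{e\in E^\rightarrow_o}\sqrt{w(e)}\,\bE[\mathbbm{1}(\sT_e)\sqrt{\bp_e/(1-\bp_e)}]$ by $2\,\bE\bigl[|h_{\bp,w}(K_o)|\,w(E(K_o))^{-1}\mathbbm{1}(|K_o|<\infty,\ E(K_o)\cap\cG\neq\emptyset)\bigr]$, which is where unimodularity and the finiteness of one of the two clusters enter and where the factor $\sqrt{p_e/(1-p_e)}$ is built in; and (ii) a bound on that expectation via the \emph{exploration fluctuation} martingale $Z_n$, whose increments are $\sqrt{w(E_i)}\bigl(\sqrt{\bp_{E_i}/(1-\bp_{E_i})}\mathbbm{1}(\text{closed})-\sqrt{(1-\bp_{E_i})/\bp_{E_i}}\mathbbm{1}(\text{open})\bigr)$ and whose total quadratic variation is $w(E(K_o))$, combined with Doob's $L^2$ inequality on dyadic scales of $Q_T$ (\cref{lem:martingale_stuff2}); this, rather than a Cauchy--Schwarz over edges, is what produces the $\sqrt{h}\asymp\lambda^{-1/2}$. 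As written, step two of your proposal would not go through.
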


\begin{proof}[Proof of \cref{prop:weak}]
By rescaling if necessary, we may assume without loss of generality that $\sum_{x \in \Z^d} J(x) =1$, so that $\beta_c \geq 1$. Let $\theta=(d-4\alpha)/4d<1/4$.
We claim that there exists a constant $C\geq 1$ such that the following implication holds for each $1/2 \leq \beta < \beta_c$ and $1 \leq A < \infty$:
\begin{equation}
\text{$\Bigl(\bP_\beta(|K_0|\geq n) \leq A n^{-\theta}$ for every $n\geq 1\Bigr)$} \\\Rightarrow  \text{$\Bigl(\bP_\beta(|K_0|\geq n) \leq C A^{1/2} n^{-\theta}$ for every $n\geq 1\Bigr)$}.
\label{eq:simple_bootstrap_claim}
\end{equation}

Indeed, fix one such $1/2 \leq \beta <\beta_c$ and suppose that $1 \leq A <\infty$ is such that $\bP_\beta(|K_0|\geq n) \leq A n^{-\theta}$ for every $n\geq 1$. All the constants appearing in the remainder of the proof will be allowed to depend on $d,\alpha,$, $c$, and $r_0$, but not on the choice of $1\leq A < \infty$ or $1/2 \leq \beta<\beta_c$.
 For each $x\in \Z^d$ and $n\geq 1$, let $\sS_{x,n}'$ be the event that $0$ and $x$ belong to distinct clusters each of which contain at least $n$ vertices. Both clusters are automatically finite since $\beta < \beta_c$.
It follows from \cref{thm:twoghostfromIsing} that there exists a constant $C_1$ such that
\[
\sum_{x \in \Z^d} J(x)^{1/2}(e^{\beta J(x)}-1)^{1/2} \bP_\beta(\sS_{x,n}') \leq C_1 n^{-1/2}
\]
for every $n \geq 1$. For each $r\geq r_0$, define $\Lambda_r'=\Lambda_r \setminus \Lambda_{r_0-1}$; we will prove bounds depending on both $n$ and $r$ before optimizing over the choice of $r$ later in the proof. Using the inequality $e^x-1 \geq x$ and the assumption that $J(x) \geq c \|x\|_1^{-d-\alpha}$ for every $x \in \Z^d \setminus \Lambda_{r_0-1}$, it follows that there exists a constant $C_2$ such that
\begin{multline}
\label{eq:twoghost_simpleproof}
\sum_{x\in \Lambda_r'} \bP_\beta(\sS_{x,n}') \leq \max\left\{ J(x)^{-1/2}(e^{\beta J(x)}-1)^{-1/2} : x\in \Lambda_r'\right\} \sum_{x \in \Z^d} J(x)^{1/2}(e^{\beta J(x)}-1)^{1/2} \bP_\beta(\sS_{x,n}') \\
\leq \max\left\{ \beta^{-1/2} J(x)^{-1} : x\in \Lambda_r'\right\}  C_1 n^{-1/2}
\leq C_2 r^{d+\alpha} n^{-1/2}
\end{multline}
for every $n\geq 1$ and $r\geq r_0$.
 On the other hand, we have trivially that there exists a constant $C_3$ such that
\begin{multline}
\label{eq:primitive0}
\sum_{x \in \Lambda_r'}\bP_\beta(0 \leftrightarrow x) = \bE_\beta |K_0 \cap \Lambda_r'| \leq \bE_\beta \left[|K_0| \wedge |\Lambda_r'| \right] = \sum_{n=1}^{|\Lambda_r'|} \bP_\beta(|K_0| \geq n) \\\leq A\sum_{n=1}^{|\Lambda_r'|} n^{-\theta} \leq  C_3 A r^{d(1-\theta)}
\end{multline}
for every $r\geq r_0$. 
We now apply these two bounds to obtain a new bound on $\bP_\beta(|K_0|\geq n)$. We have by a union bound and the Harris-FKG inequality that
\[
\bP_\beta(\sS_{x,n}') \geq \bP_\beta(|K_0|\geq n,|K_x|\geq n)-\bP_\beta(0 \leftrightarrow x) \geq \bP_\beta(|K_0|\geq n)^2-\bP_\beta(0 \leftrightarrow x)
\]
for each $x\in \Z^d$ and $n\geq 1$. Rearranging yields $\bP_\beta(|K_0|\geq n)^2 \leq \bP_\beta(0 \leftrightarrow x) + \bP_\beta(\sS_{x,n}') $ for every  $x\in \Z^d$ and $n\geq 1$, and it follows by  averaging over $x\in \Lambda_r'$ that there exists a constant $C_4$ such that
\begin{equation*}
\bP_\beta(|K_0|\geq n)^2 \leq \frac{1}{|\Lambda_r'|}\sum_{x\in \Lambda_r'} \bP_\beta(0 \leftrightarrow x) + \frac{1}{|\Lambda_r'|}\sum_{x\in \Lambda_r'}\bP_\beta(\sS_{x,n}')
\leq C_4 A r^{-d\theta} + C_4 r^{\alpha} n^{-1/2}
\end{equation*}
for every $r\geq r_0$ and $n\geq 1$, where we applied \eqref{eq:primitive0} and \eqref{eq:twoghost_simpleproof} in the second inequality.
Taking 
$r=r_0 \vee \left\lceil n^{(1-4\theta)/(2\alpha)}\right \rceil$ yields that there exists a constant $C_5$ such that
\begin{align}
\bP_\beta(|K_0|\geq n)^2 
&\leq C_5 A n^{-d \theta(1-4\theta)/(2\alpha)} +  C_5 n^{-2\theta} = C_5 (A+1) n^{-2\theta} \leq 2 C_5 A n^{-2\theta}
\label{eq:simple_bootstrap_final}
\end{align}
for every $n \geq 1$, where we used that $\theta=(d-4\alpha)/4d$ in the central equality. (We arrived at this value of $\theta$ by getting to this stage of the calculation with $\theta$ indeterminate and solving for the value of $\theta$ that made the two powers of $n$ equal.) The inequality \eqref{eq:simple_bootstrap_final} implies the claimed implication \eqref{eq:simple_bootstrap_claim} by taking square roots on both sides.

We now apply the bootstrapping implication \eqref{eq:simple_bootstrap_claim} to complete the proof of the proposition.
For each $1/2 \leq \beta < \beta_c$, we have by sharpness of the phase transition \cite{aizenman1987sharpness,duminil2015new} that $|K_0|$ has finite mean (indeed, it has an exponential tail), and in particular that there exists $1 \leq A < \infty$ such that $\bP_\beta(|K_0|\geq n) \leq A n^{-\theta}$ for every $n\geq 1$. For each $1/2\leq \beta < \beta_c$ we may therefore define
\[
A_\beta = \min\bigl\{1 \leq A < \infty : \bP_\beta(|K_0|\geq n) \leq A n^{-\theta} \text{ for every $n\geq 1$}\bigr\} < \infty.
\]
Since the set we are minimizing over is closed, we have that $\bP_\beta(|K_0|\geq n) \leq A_\beta n^{-\theta}$ for every $n\geq 1$ and $1/2 \leq \beta < \beta_c$.
Moreover, \eqref{eq:simple_bootstrap_claim} implies that there exists a constant $C$ such that 
$A_\beta \leq C A_\beta^{1/2}$
for every $0 \leq \beta < \beta_c$, and since $A_\beta$ is finite for every $1/2\leq \beta <\beta_c$ we may safely rearrange this inequality to obtain that $A_\beta \leq C^2$ for every $1/2\leq \beta <\beta_c$. Thus, we have proven that
$\bP_\beta(|K_0|\geq n) \leq C^{2} n^{-\theta}$
for every $0 \leq \beta < \beta_c$. Considering the standard monotone coupling of $\bP_\beta$ and $\bP_{\beta'}$ for $\beta \leq \beta'$ and taking limits, it follows that the same estimate holds for all $0\leq \beta\leq\beta_c$ and $n\geq 1$ as claimed.
\end{proof}

In order to prove \cref{thm:main}, we will improve the above proof in two ways: In \cref{sec:hyperscaling} we develop a better method to convert volume-tail bounds into two-point function bounds than the primitive method used in \eqref{eq:primitive0}, while in \cref{sec:twoghost} we prove an improved form of the two-ghost inequality that gives better bounds on $\bP_\beta(\sS_{e,n})$ in the case that $e$ is a typical `long' edge. (Each improvement can be used in isolation to prove a result of intermediate strength requiring $\alpha<d/2$.)

\subsection{Comparison to physics predictions}
\label{subsec:physics}

\pgfplotsset{width=0.4\textwidth}
\pgfplotsset{compat=newest}

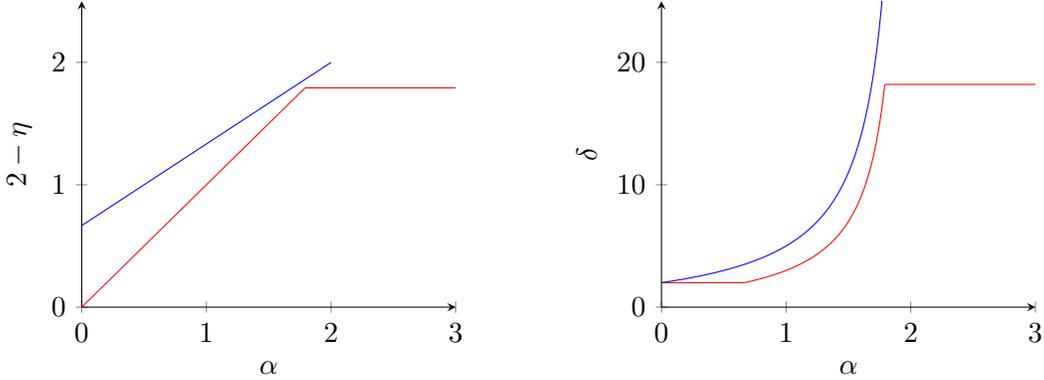
\begin{figure}[t]
\centering
\begin{tikzpicture}
\begin{axis}[
    axis lines = left,
    xlabel = $\alpha$,
    ylabel = {$2-\eta$},
    xmin=0,xmax=3,
    ymax=2.5
]
\addplot [
    domain=0:43/24, 
    samples=100, 
    color=red,
]
{x};
\addplot [
    domain=43/24:3, 
    samples=100, 
    color=red,
]
{43/24};
\addplot [
    domain=0:2, 
    samples=100, 
    color=blue,
    ]
    {(2/3)+(2*x)/3};
\end{axis}
\end{tikzpicture}
    \hspace{1cm}
    \begin{tikzpicture}
\begin{axis}[
    axis lines = left,
    xlabel = $\alpha$,
    ylabel = {$\delta$},
    xmin=0, xmax=3,
    ymin=0, ymax=25
]
\addplot [
    domain=0:2/3, 
    samples=100, 
    color=red,
]
{2};
\addplot [
    domain=2/3:43/24, 
    samples=100, 
    color=red,
]
{(2+x)/(2-x)};
\addplot [
    domain=43/24:3, 
    samples=100, 
    color=red,
]
{91/5};
\addplot [
    domain=0:1.8, 
    samples=100, 
    color=blue,
    ]
    {(4+x)/(2-x)};
\end{axis}
\end{tikzpicture}
    \caption{Our upper bounds (blue) vs.\ the conjectured true values (red) of $2-\eta$ and $\delta$ when $d=2$. }
    \label{fig:2d}
    \vspace{-1em}
\end{figure}

We now give a brief heuristic discussion of how our exponent bounds compare to the values predicted in the physics literature. 
Building upon the work of Sak \cite{sak1973recursion} on long-range Ising models (see also e.g.\ \cite{behan2017scaling,luijten1997interaction}), physicists including Brezin, Parisi, and  Ricci-Tersenghi \cite{brezin2014crossover}  have argued  
 that
  if $\eta(d,\alpha)$ and $\delta(d,\alpha)$ denote the values of the exponents $\eta$ and $\delta$ for long-range percolation in dimension $d$ with long-range parameter $\alpha$ and $\eta_\mathrm{SR}(d)$ and $\delta_\mathrm{SR}(d)$ denote the corresponding nearest-neighbour exponents then
\begin{equation}
\label{eq:eta_prediction}
2-\eta(d,\alpha) = \begin{cases} \alpha & \alpha \leq 2-\eta_\mathrm{SR}(d)\\
2-\eta_\mathrm{SR}(d) & \alpha>2-\eta_\mathrm{SR}(d),
\end{cases}
\end{equation}
with logarithmic corrections to scaling at the `crossover' value $\alpha^*(d) = 2-\eta_\mathrm{SR}(d)$. 
In particular, the exponent $\eta(d,\alpha)$ is predicted to `stick' to its mean-field value of $2-\alpha$ in the interval $(d/3,\alpha^*]$, even though other exponents such as $\delta$ are \emph{not} expected to take their mean-field values in this interval.
Numerical work supporting these predictions has recently been carried out in \cite{gori2017one}.
See \cite{MR3306002,MR4032873,MR2430773} for rigorous proofs in certain high-dimensional cases and \cite{MR3772040,MR3723429} for related rigorous results for the long-range spin $O(n)$ model. Assuming further that $\delta(d,\alpha)$ takes its mean-field value of $2$ when $\alpha \leq d/3$ and that the hyperscaling relation $(2-\eta)(\delta+1)=d(\delta-1)$ is satisfied when $\alpha \geq d/3$ yields that
\vspace{-0.25em}
\begin{equation}
\label{eq:delta_prediction}
\delta(d,\alpha) = 
\begin{cases}
2 & \hspace{1.855em}0< \alpha \leq d/3\\
(d+\alpha)/(d-\alpha) & \hspace{0.84em}d/3 \leq \alpha \leq \alpha^*(d)\\
\delta_\mathrm{SR}(d) & \alpha^*(d) \leq \alpha <\infty.
\end{cases}
\end{equation}
 As discussed above, it is strongly expected and known in some cases that $\eta_\mathrm{SR}(2)=5/24$ and that $\eta_\mathrm{SR}(d)=0$ when $d\geq 6$. On the other hand, it is believed that $\eta_\mathrm{SR}$ takes small negative values for $d\in \{3,4,5\}$: Both numerical estimates \cite{lorenz1998precise,xun2020precise,tiggemann2001simulation,wang2013bond} and non-rigorous renormalization group methods \cite{gracey2015four} give values ranging between $-0.1$ and $-0.01$ in all three cases. (See the Wikipedia page \url{https://en.wikipedia.org/wiki/Percolation_critical_exponents} for a summary.) As such, it is believed that $\alpha^*(d)<d$ for every $d\geq 2$ and hence that that the models treated by \cref{thm:main} should include examples in the same universality class as nearest-neighbour Bernoulli bond percolation on each lattice of dimension $d\geq 2$. (Proving such a universality claim would, however, require a vastly better understanding of these models than that provided by \cref{thm:main}.) The bounds we obtain on the exponents for these models   are of reasonable order, with our upper bounds on $\delta(d,\alpha)$ always within a factor of $2$ of the predicted true values when $\alpha \leq \alpha^*(d) =2-\eta_\mathrm{SR}(d)$. See \cref{fig:2d,fig:3d} for side-by-side comparisons in two and three dimensions.

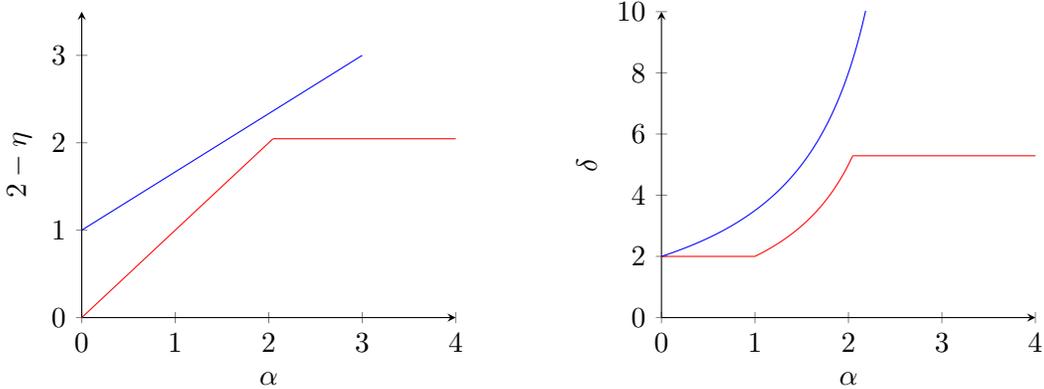
\begin{figure}
\centering
\begin{tikzpicture}
\begin{axis}[
    axis lines = left,
    xlabel = $\alpha$,
    ylabel = {$2-\eta$},
    ymax=3.5
]
\addplot [
    domain=0:2.0457, 
    samples=100, 
    color=red,
]
{x};
\addplot [
    domain=2.0457:4, 
    samples=100, 
    color=red,
]
{2.0457};
\addplot [
    domain=0:3, 
    samples=100, 
    color=blue,
    ]
    {1+(2*x)/3};
\end{axis}
\end{tikzpicture}
    \hspace{1cm}
        \begin{tikzpicture}
\begin{axis}[
    axis lines = left,
    xlabel = $\alpha$,
    ylabel = {$\delta$},
    xmin=0, xmax=4,
    ymin=0, ymax=10
]
\addplot [
    domain=0:1, 
    samples=100, 
    color=red,
]
{2};
\addplot [
    domain=1:2.0457, 
    samples=100, 
    color=red,
]
{(3+x)/(3-x)};
\addplot [
    domain=2.0457:4, 
    samples=100, 
    color=red,
]
{5.2886};
\addplot [
    domain=0:2.5, 
    samples=100, 
    color=blue,
    ]
    {(6+x)/(3-x)};
\end{axis}
\end{tikzpicture}
    \caption{Our upper bounds (blue) vs.\ the conjectured true values (red) of $2-\eta$ and $\delta$ when $d=3$. 
Here we use the numerical values $\alpha^*(3)=2-\eta_\mathrm{SR}(3)\approx 2.0457$ and $\delta_\mathrm{SR}(3)\approx 5.2886$ obtained by applying the scaling and hyperscaling relations to the numerical estimates on the exponents $\nu$ and $\beta/\nu$ obtained by Wang et al.\ in \cite{wang2013bond}. When $\alpha=2.0457\approx \alpha^*(3)$ our upper bound on $\delta$ is about $8.43$.}
\label{fig:3d}
\end{figure}

\section{Hyperscaling inequalities and the maximum cluster size in a box}
\label{sec:hyperscaling}

The proof of \cref{prop:weak} made use of the fact that if Bernoulli bond percolation on some weighted graph $G=(V,E,J)$ satisfies a bound of the form $\sup_{v\in V}\bP_\beta(|K_v|\geq n) \leq A n^{-\theta}$ for some $0\leq \theta <1$ and $A<\infty$ then we have that
\begin{equation}
\label{eq:primitive}
\sum_{v\in \Lambda}\bP_\beta(u \leftrightarrow v) = \bE_\beta |K_u \cap \Lambda| \leq \bE_\beta \left[|K_u| \wedge |\Lambda| \right] \leq A\sum_{n=1}^{|\Lambda|} n^{-\theta} \leq  C(\theta)A |\Lambda|^{1-\theta}
\end{equation}
for every $\Lambda \subseteq V$ and $u\in V$, where $C(\theta)=O(1/(1-\theta))$ depends only on $\theta$. Tasaki \cite{MR926203} observed that this inequality, which holds for arbitrary random graph models on $\Z^d$, can be thought of as giving a primitive \emph{hyperscaling inequality} $(2-\eta)\delta \leq d(\delta-1)$. In this section, we prove an inequality implying the stronger hyperscaling inequality $(2-\eta)(\delta+1) \leq d(\delta-1)$. Note that while the arguments in the rest of the paper can all be applied to certain \emph{dependent} percolation models including the random-cluster model with only a little extra work, the arguments in this section rely on the BK inequality in an essential way and are therefore very specific to Bernoulli percolation.

Let us now briefly review what is known about scaling and hyperscaling relations for Bernoulli percolation. In addition to the critical exponents $\delta$ and $\eta$ that we have already introduced, it is also believed that there exist exponents $\gamma, \Delta,$ $\rho$, and $\beta$ such that 
\begin{align*}
  \bE_{\beta_c-\eps}\left[|K_0|^k \right] &\approx \eps^{-\gamma - \Delta(k-1)} && \text{ as $\eps \downarrow 0$ for each $k\geq 1$}\\
\bP_{\beta_c}(0 \leftrightarrow \partial [-r,r]^d) &\approx r^{-1/\rho} && \text{ as $r \uparrow\infty$, and}\\
\bP_{\beta_c+\eps}(|K_0|=\infty) &\approx \eps^\beta && \text{ as $\eps \downarrow 0$.}
\end{align*}
As before, $\approx$ means that the ratio of the logarithms of the two sides tends to $1$ in the relevant limit. A further critical exponent $\nu$ is expected to describe the \emph{correlation length} $\xi(\beta)$ through the asymptotics $\xi(\beta_c-\eps) \approx \eps^{-\nu}$ as $\eps \downarrow 0$. Intuitively the correlation length is the scale on which off-critical behaviour begins to manifest itself, see \cite[Section 6.2]{grimmett2010percolation} for a precise definition in the nearest-neighbour context.  Heuristic scaling theory predicts that these exponents always satisfy the \emph{scaling relations}
\begin{equation}
\label{eq:scaling}
\gamma = \beta(\delta-1), \qquad \beta \delta = \Delta, \qquad \text{ and } \qquad \gamma = \nu (2-\eta).
\end{equation}
 Below the upper critical dimension, two additional relations between these exponents known as the \emph{hyperscaling relations} are expected to hold, namely
\begin{equation}
\label{eq:hyperscaling}
d \rho = \delta + 1 \qquad \text{ and } \qquad  d \nu = \beta(\delta+1).
\end{equation}
Note that the hyperscaling relations involve the dimension $d$ while the scaling relations do not. 
It is a heuristic originally due to Coniglio \cite{coniglio1985shapes} that the hyperscaling relations should hold if there are typically $O(1)$ `large' critical clusters on any given scale. This condition is believed to hold below the upper critical dimension but not above; see \cite{MR1716769,MR1431856} for detailed discussions. See \cite[Section 9.1]{grimmett2010percolation} for an overview of the heuristic arguments in support of the scaling and hyperscaling relations.

For nearest-neighbour percolation on two-dimensional planar lattices, the scaling relations \eqref{eq:scaling} and hyperscaling relations \eqref{eq:hyperscaling} were proven to hold by Kesten \cite{MR879034} under the assumption that the exponents $\delta$ and $\nu$ are both well-defined. (Kesten's results were of central importance to the subsequent computation of the critical exponents for site percolation on the triangular lattice following Smirnov's proof of conformal invariance \cite{smirnov2001critical2,smirnov2001critical,lawler2002one}.)   See also \cite{MR3940769} for related results on two-dimensional Voronoi percolation.   Meanwhile, in high dimensions, it is now known that the exponents $\beta,\gamma,\delta,\Delta,\eta,\rho,$ and $\nu$ all take their mean-field values in nearest-neighbour percolation with $d\gg 6$, from which it follows that the scaling relations \eqref{eq:scaling} are satisfied but that the hyperscaling relations \eqref{eq:hyperscaling} are violated;
see \cite{heydenreich2015progress} for an overview and \cite{MR1043524,MR1959796,MR2748397,MR3306002,MR762034,MR1127713} for highlights of the high-dimensional literature.

It remains completely open to prove that the scaling and hyperscaling relations hold in dimensions $2<d\leq 6$, even if one assumes that all the relevant exponents are well-defined. The most significant progress is due to Borgs, Chayes, Kesten, and Spencer \cite{MR1716769,MR1868996}, who proved in particular that the scaling and hyperscaling relations both hold in low-dimensional lattices for which $\rho$ is well-defined under the (as yet unproven) assumption that the number of clusters crossing the box $[0,r]\times [0,3r]^{d-1}$ in the easy direction is tight as $r\to\infty$. 
Their proof also yields that the hyperscaling \emph{inequalities}  
\[
d\rho \geq \delta+1 \qquad \text{ and } \qquad d-2+\eta \geq 2/\rho 
\]
hold on any graph for which these exponents are well-defined. Many further works have established various other inequalities between critical exponents; see the work of Tasaki \cite{MR918404,MR926203} for hyperscaling inequalities and the recent work \cite{1901.10363} and references therein for an overview of scaling inequalities. 

The main goal of this section is to prove the following theorem, which improves significantly upon the naive bound of \eqref{eq:primitive}.

\begin{thm}
\label{thm:hyperscalingsimple}
There exists a universal constant $C$ such that the following holds. 
 Let $G=(V,E,J)$ be a weighted graph, let $\beta \geq 0$, and suppose that there exist constants $A<\infty$ and $0 \leq \theta \leq 1/2$ such that $\bP_\beta( |K_u| \geq \lambda) \leq A \lambda^{-\theta}$ for every $u\in V$ and $\lambda>0$. Then
\[
\frac{1}{|\Lambda|}\sum_{v \in \Lambda} \bP_\beta(u\leftrightarrow v) 
\leq 
C A^{2 /(1+\theta)} |\Lambda|^{-2\theta/(1+\theta)}
\]
for every $u\in V$ and every finite set $\Lambda \subseteq V$.
\end{thm}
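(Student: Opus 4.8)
The plan is to reduce to finite $G$, dispose of the trivial regime where $A$ is large, and then combine the hypothesised tail bound with the \textsf{BK} inequality to show that, above a suitable threshold, $|K_u\cap\Lambda|$ has a geometric tail with a small prefactor; the mean of $|K_u\cap\Lambda|$ is then governed by its small-scale contribution, which the hypothesis controls directly. For the reduction: since every connection event uses only finitely many edges, $\bP_\beta(u\leftrightarrow v)=\lim_{W\uparrow V}\bP_{G[W],\beta}(u\leftrightarrow v)$ along finite induced subgraphs $W$, and the hypothesis passes to each $G[W]$ with the same $A$ and $\theta$ because cluster sizes only decrease when edges are removed; as $\Lambda$ is finite this reduces the statement to finite $G$, where all clusters are finite. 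Write $N=|\Lambda|$. We may assume $1\le A<c_0N^{\theta}$ for a small absolute constant $c_0$, since otherwise $CA^{2/(1+\theta)}N^{-2\theta/(1+\theta)}\ge 1\ge \frac1N\sum_{v\in\Lambda}\bP_\beta(u\leftrightarrow v)$ for suitable $C$. Let $\mathbf M:=\max_K|K\cap\Lambda|$ over clusters $K$, set $q_j:=\bP_\beta(\mathbf M\ge j)$, and let $Z_j$ be the number of clusters $K$ with $|K\cap\Lambda|\ge j$. Using $j\mathbbm{1}(|K\cap\Lambda|\ge j)\le |K\cap\Lambda|$ and summing over clusters gives $q_j\le \bE_\beta Z_j\le \tfrac1j\sum_{v\in\Lambda}\bP_\beta(|K_v\cap\Lambda|\ge j)\le \tfrac Nj\min\{1,Aj^{-\theta}\}$, so (using $A<c_0N^\theta$) there is a threshold $t\asymp (NA)^{1/(1+\theta)}$ with $A^{1/\theta}\le t$ and $q_{\lceil t/3\rceil}\le \tfrac12$.

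The crucial input is a combinatorial/\textsf{BK} estimate: a cluster whose intersection with $\Lambda$ is large can be broken into many edge-disjoint connected sub-pieces each still meeting $\Lambda$ substantially. Precisely, if $K$ is a cluster with $|K\cap\Lambda|\ge j$, then a spanning tree of $K$ can be split (by a routine greedy/\textsf{DFS} argument) into edge-disjoint connected subgraphs each meeting $\Lambda$ in between $\lceil j/3\rceil$ and $j$ vertices, and there are at least $|K\cap\Lambda|/j$ of them; when $u\in K$ we may further arrange that one of the pieces, $H_0$, contains $u$. Each piece is an open connected subgraph meeting $\Lambda$ in $\ge\lceil j/3\rceil$ vertices and hence witnesses the increasing event $\{\mathbf M\ge \lceil j/3\rceil\}$, while $H_0$ witnesses $\{|K_u\cap\Lambda|\ge\lceil j/3\rceil\}$; as the pieces are edge-disjoint, the \textsf{BK} inequality yields, for all integers $j,\ell\ge 1$,
\[
\bP_\beta\!\bigl(|K_u\cap\Lambda|\ge \ell j\bigr)\;\le\; \bP_\beta\!\bigl(|K_u\cap\Lambda|\ge \lceil j/3\rceil\bigr)\cdot\bigl(q_{\lceil j/3\rceil}\bigr)^{\ell-1}.
\]
(The same decomposition without the $H_0$ constraint gives $q_{\ell j}\le (q_{\lceil j/3\rceil})^{\ell}$, displaying the fast decay of $q$, though only the displayed inequality is used below.)

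Applying the displayed inequality with $j=t$: since $q_{\lceil t/3\rceil}\le\tfrac12$ and, by hypothesis, $\bP_\beta(|K_u\cap\Lambda|\ge \lceil t/3\rceil)\le A\lceil t/3\rceil^{-\theta}=:\eps$, we get $\bP_\beta(|K_u\cap\Lambda|\ge \ell t)\le \eps\,2^{-(\ell-1)}$ for all $\ell\ge1$, whence $\bE_\beta\bigl[|K_u\cap\Lambda|\,\mathbbm{1}(|K_u\cap\Lambda|\ge t)\bigr]\le \sum_{\ell\ge1}(\ell+1)t\,\bP_\beta(|K_u\cap\Lambda|\ge \ell t)\lesssim \eps t$. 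For the complementary part, $\bE_\beta[|K_u\cap\Lambda|\wedge t]=\sum_{j=1}^{t}\bP_\beta(|K_u\cap\Lambda|\ge j)\le \sum_{j=1}^{t}\min\{1,Aj^{-\theta}\}\lesssim At^{1-\theta}$, using $A^{1/\theta}\le t$. Since $t\asymp (NA)^{1/(1+\theta)}$ one has $\eps\asymp At^{-\theta}$ and hence $At^{1-\theta}\asymp \eps t\asymp A^{2/(1+\theta)}N^{(1-\theta)/(1+\theta)}$, so adding the two contributions,
\[
\sum_{v\in\Lambda}\bP_\beta(u\leftrightarrow v)=\bE_\beta|K_u\cap\Lambda|\lesssim A^{2/(1+\theta)}N^{(1-\theta)/(1+\theta)},
\]
which is the assertion after dividing by $N=|\Lambda|$.

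The step I expect to demand the most care is the \textsf{BK} input: one must make the tree-splitting precise — producing genuinely edge-disjoint connected pieces of controlled $\Lambda$-size, with one piece pinned to contain $u$ — and then apply the iterated \textsf{BK} inequality to the associated increasing events, tracking the numerical constants so that the final exponents come out as $A^{2/(1+\theta)}N^{-2\theta/(1+\theta)}$ rather than the weaker bound $\asymp (NA)^{1/(1+\theta)}/N$ that cruder arguments (e.g.\ those bounding $\bE_\beta|K_u\cap\Lambda|$ by $\bE_\beta\mathbf M$) produce. Everything else is routine first-moment bookkeeping and geometric summation.
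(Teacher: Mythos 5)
Your proof is correct in outline and rests on the same combinatorial heart as the paper's: splitting a spanning tree of a large cluster into many edge-disjoint connected pieces whose intersections with $\Lambda$ are of comparable size, then feeding these disjoint witnesses into the iterated BK inequality to obtain a geometric tail above a threshold. (Your tree-splitting claim differs cosmetically from the paper's \cref{lem:divide_and_conquer}, which organizes the pieces by powers of $3$ rather than an absolute target $j$, but it is provable by the same binary-splitting argument; you rightly flag this as the place where care is needed, and you should also note that the claim degenerates for very small $j$, e.g.\ $j=1$, though this does not affect the final bound.)

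Where you genuinely diverge from the paper is in how the threshold is controlled. The paper proves the intermediate Theorems \ref{thm:maximum_tail} and \ref{thm:universaltightness} (universal tightness of $|K_\mathrm{max}(\Lambda)|$), feeds them into \cref{cor:obvious}, and then pins down the median $M_\beta(\Lambda)$ by a second-moment/double-counting comparison: the upper bound $\sum_{u,v\in\Lambda}\bP_\beta(u\leftrightarrow v)\lesssim AM^{1-\theta}|\Lambda|$ of \eqref{eq:max_to_expectation} is played against the lower bound $\sum_{u,v\in\Lambda}\bP_\beta(u\leftrightarrow v)\geq \bE_\beta[|K_\mathrm{max}(\Lambda)|^2]\gtrsim M^2$ of \eqref{eq:Max2ndmoment} to yield $M^{1+\theta}\lesssim A|\Lambda|$. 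You instead control the same threshold by a first-moment (Markov/union) bound: from $jZ_j\leq\sum_{v\in\Lambda}\mathbbm{1}(|K_v\cap\Lambda|\geq j)$ you get $q_j\leq\bE_\beta Z_j\leq \frac{N}{j}\min\{1,Aj^{-\theta}\}$, which directly shows that $q$ drops below $1/2$ at $t\asymp(NA)^{1/(1+\theta)}$. This is a slicker and more self-contained route to the specific bound of \cref{thm:hyperscalingsimple}, since it bypasses the lower bound \eqref{eq:Max2ndmoment} and indeed the whole formulation via $M_\beta(\Lambda)$. What the paper's route buys in exchange is the free-standing universal tightness theorem (valid with no power-law hypothesis), which is stated as a result of independent interest, and the two-sided control on $M_\beta(\Lambda)$ recorded in \cref{thm:hyperscaling}. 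A small further remark: the paper's \cref{thm:hyperscaling} allows the full range $0\leq\theta<1$, whereas your argument as written restricts to $\theta\leq 1/2$ (matching the stated \cref{thm:hyperscalingsimple}); extending your bookkeeping to $\theta$ near $1$ is possible but requires a little more care with $\sum_{j\leq t}Aj^{-\theta}$.
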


 In the context of $\Z^d$, it follows from \cref{thm:hyperscalingsimple} that if the exponents $\eta$ and $\delta$ are both well-defined then they satisfy the hyperscaling inequality
\begin{equation}
\label{eq:hyperscaling_eta_delta}
(2 -\eta)(\delta+1) \leq d(\delta-1).
\end{equation}
Indeed, if $\eta$ and $\delta$ are both well-defined then either $\eta \geq 2$, in which case \eqref{eq:hyperscaling_eta_delta} is trivial, or we can apply \cref{thm:hyperscalingsimple} with $\theta=1/\delta-\eps$ for $\eps>0$ arbitrary (noting that $\delta \geq 2$ when it is well defined \cite{aizenman1987sharpness,duminil2015new}) to compute that
\[
r^{-d+2-\eta} \approx r^{-d} \sum_{x \in \Lambda_r} \|x\|^{-d+2-\eta} \approx r^{-d} \sum_{x,y \in \Lambda_r} \bP_\beta(0\leftrightarrow x) \lesssim   r^{-2d/(\delta+1)} \qquad \text{ as $r\to \infty$,}
\]
 where we write $\lesssim$ to mean that the ratio of the logarithms of the left and right hand sides has limit supremum less than $1$. This inequality may be  rearranged to prove the inequality \eqref{eq:hyperscaling_eta_delta} in the case $\eta < 2$. We remark that the inequality \eqref{eq:hyperscaling_eta_delta}  is expected to be an equality below the upper critical dimension, as would follow from the validity of the scaling and hyperscaling relations.

In order to prove \cref{thm:hyperscalingsimple}, we first prove in \cref{subsec:universaltightness} a universal inequality implying in particular that the maximum size of the intersection of a cluster with a finite set is exponentially unlikely to be much larger than its median value, \cref{thm:universaltightness}. 
This inequality is proven by a combinatorial argument using the BK inequality. 
We then deduce \cref{thm:hyperscalingsimple}  from this inequality in \cref{subsec:hyperscalingproof} by a fairly straightforward calculation. 

\subsection{Universal tightness of the maximum cluster size in a finite region}
\label{subsec:universaltightness}

Let $G=(V,E,J)$ be a countable weighted graph, and consider Bernoulli bond percolation on $G$ with parameter $\beta \geq 0$. For each finite subset $\Lambda$ of $V$, we define 
\[
|K_\mathrm{max}(\Lambda)|=\max\{|K_v \cap \Lambda| : v\in V\}=\max\{|K_v \cap \Lambda| : v\in \Lambda\}.
\]
(This is a slight abuse of notation: there may be more than one cluster achieving this maximum, so that $K_\mathrm{max}(\Lambda)$ need not be well-defined as a set in general.) In this section we prove a general inequality, applying universally to all $G$, $\beta$, and $\Lambda$, implying that $|K_\mathrm{max}(\Lambda)|$ is of the same order as its `typical value' $M_\beta(\Lambda):=\min \{n \geq 0 : \bP_\beta(|K_\mathrm{max}(\Lambda)| \geq n)\leq e^{-1} \}$ with high probability. In particular, one simple consequence of this theorem is that $e^{-1} M_\beta(\Lambda) \leq \bE_\beta |K_\mathrm{max} (\Lambda)| \leq 10 M_\beta(\Lambda)$, so that the mean and typical value of $|K_\mathrm{max}(\Lambda)|$ are always of the same order. 
 We expect that the inequalities we prove in this section will have many further applications in the future.

\begin{theorem}[Universal tightness of the maximum cluster size] 
\label{thm:universaltightness}
Let $G=(V,E,J)$ be a countable weighted graph and let $\Lambda \subseteq V$ be finite and non-empty. Then the inequalities
\begin{align}
\bP_\beta\Bigl(|K_\mathrm{max}(\Lambda)| \geq \alpha M_\beta(\Lambda)\Bigr) &\leq \exp\left(-\frac{1}{9}\alpha \right)
\label{eq:BigClusterUnrooted}
\\
\text{and} \qquad \bP_\beta\Bigl(|K_\mathrm{max}(\Lambda)| < \eps M_\beta(\Lambda) \Bigr) &\leq 27 \eps \qquad \phantom{\text{and}} \qquad
\label{eq:SmallMaximum}
\end{align}
hold for every $\beta\geq 0$, $\alpha \geq 1$, and $0<\eps \leq 1$. Moreover, the inequality
\begin{equation}
\label{eq:BigClusterRooted}
\bP_\beta\Bigl(|K_u \cap \Lambda| \geq \alpha M_\beta(\Lambda)\Bigr) \leq e \cdot \bP_\beta\Bigl(|K_u \cap \Lambda| \geq  M_\beta(\Lambda)\Bigr) \exp\left(-\frac{1}{9}\alpha \right)
\end{equation}
holds for every $\beta \geq 0$, $\alpha \geq 1$, and $u \in V$.
\end{theorem}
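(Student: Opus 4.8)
The plan is to reduce all three inequalities to a single deterministic decomposition statement and then invoke the van den Berg--Kesten (BK) inequality. Throughout write $N=|K_\mathrm{max}(\Lambda)|$ and $M=M_\beta(\Lambda)$, and recall that $\bP_\beta(N\geq n)\leq e^{-1}$ for every $n\geq M$ by definition of $M$, and that $\{N\geq n\}$ and $\{|K_u\cap \Lambda|\geq n\}$ are increasing events.

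First I would prove the following decomposition lemma: for all integers $a,b\geq 0$ and every vertex $u$,
\[
\{N\geq a+2b\}\subseteq\{N\geq a\}\circ\{N\geq b\}
\qquad\text{and}\qquad
\{|K_u\cap\Lambda|\geq a+2b\}\subseteq\{|K_u\cap\Lambda|\geq a\}\circ\{N\geq b\},
\]
where $\circ$ denotes disjoint occurrence. On the event on the left I would choose a finite tree $T$ of open edges inside the relevant cluster (the cluster of $u$, in the rooted case) that spans $a+2b$ vertices of $\Lambda$, rooting $T$ at $u$ in the rooted case; for each vertex $v$ of $T$ let $w(v)$ be the number of $\Lambda$-vertices in the subtree below $v$. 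Choosing $v$ to be a deepest vertex with $w(v)\geq b$, every child-subtree of $v$ contains fewer than $b$ vertices of $\Lambda$, so I can adjoin child-subtrees of $v$ to $\{v\}$ one at a time until the running count of $\Lambda$-vertices first reaches $b$; the resulting connected open subgraph $H_2$ then spans between $b$ and $2b-1$ vertices of $\Lambda$. Deleting the edges of $H_2$ from $T$ leaves a connected open subgraph $H_1$ (a tree with a pendant subforest removed) whose $\Lambda$-count is at least $(a+2b)-2b=a$ and which still contains $u$, since $v$ is either $u$ or a strict descendant of $u$. As $E(H_1)$ and $E(H_2)$ are disjoint sets of open edges witnessing the two events on the right, the lemma follows.

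Given the lemma, the BK inequality yields $\bP_\beta(N\geq a+2b)\leq\bP_\beta(N\geq a)\bP_\beta(N\geq b)$ and $\bP_\beta(|K_u\cap\Lambda|\geq a+2b)\leq\bP_\beta(|K_u\cap\Lambda|\geq a)\bP_\beta(N\geq b)$. Taking $b=M$ turns these into $\bP_\beta(N\geq n+2M)\leq e^{-1}\bP_\beta(N\geq n)$ and $\bP_\beta(|K_u\cap\Lambda|\geq n+2M)\leq e^{-1}\bP_\beta(|K_u\cap\Lambda|\geq n)$, which on iteration give geometric decay on the scale $2M$; inequalities \eqref{eq:BigClusterUnrooted} and \eqref{eq:BigClusterRooted} then follow by choosing the number of iterations proportional to $\alpha$ when $\alpha$ is large, and directly from $\bP_\beta(N\geq M)\leq e^{-1}\leq e^{-\alpha/9}$ when $\alpha\leq 9$ (this bounded-range crudeness is what turns the $\tfrac12$ that the iteration literally produces into the stated $\tfrac19$). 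For the lower bound \eqref{eq:SmallMaximum} I would iterate the first half of the lemma $k-1$ times to obtain $\{N\geq(2k-1)\ell\}\subseteq\{N\geq\ell\}\circ\cdots\circ\{N\geq\ell\}$ ($k$ factors), so that BK gives $\bP_\beta(N\geq(2k-1)\ell)\leq(1-\delta)^k$ with $\delta:=\bP_\beta(N<\ell)$; taking $\ell$ of order $\eps M$ and $k$ of order $1/\delta$ forces the left-hand side below $e^{-1}$, so by definition of $M$ we get $M\leq(2k-1)\ell$, i.e.\ $M$ is of order $\eps M/\delta$, and rearranging gives $\delta=O(\eps)$ with a constant comfortably below $27$.

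I expect the decomposition lemma to be the only substantive step, and within it the points requiring care are that the peeled subgraph $H_2$ always has $\Lambda$-count in the window $[b,2b)$ — which works precisely because no single child-subtree of the chosen $v$ reaches $b$, so the argument is robust to unbounded vertex degrees (a star being the instructive test case) — and that removing $H_2$ keeps the remainder connected and leaves $u$ on the correct side in the rooted case. The remaining steps are short manipulations with BK and the definition of $M_\beta(\Lambda)$. The one background subtlety is the use of BK for the events $\{N\geq n\}$, which a priori depend on infinitely many edges; this causes no difficulty since these are increasing events each of whose occurrences is witnessed by a finite set of open edges, so BK applies by the usual approximation.
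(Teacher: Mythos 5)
Your proof is correct, but it takes a genuinely different route from the paper's. The paper derives all three inequalities from a single inclusion (its Theorem~3.4): after choosing a spanning tree of the cluster, it invokes a divide-and-conquer lemma (Lemma~3.5) partitioning the tree into $m \geq 3^{k-1}+1$ connected edge-sets each with $\Lambda$-count in $[3^{-k}|A|,\,3^{-k+1}|A|)$, and then applies BK once with $3^{k-1}+1$ factors. You instead prove a simpler \emph{two-piece} peeling lemma --- $\{N\geq a+2b\}\subseteq\{N\geq a\}\circ\{N\geq b\}$ and its rooted analogue --- by descending to a deepest vertex $v$ with $\Lambda$-weight $\geq b$ and accreting child-subtrees of $v$ (all of which have $\Lambda$-weight $<b$ by maximality of depth) until the running count first reaches $[b,2b)$, leaving a connected complement with $\Lambda$-count $\geq a$ that still contains the root. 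You then iterate this two-piece lemma and apply BK with $b=M$, or with $b=\ell\approx\eps M$ repeatedly, to recover all three bounds.

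Both approaches rest on the same underlying ideas --- decompose a spanning tree of the cluster into disjoint connected pieces that each intersect $\Lambda$ in a controlled fraction, then apply BK --- and indeed the paper's Lemma~3.5 is itself proven by an iterated binary splitting argument. But your peel-off lemma is noticeably more elementary to state and check, and its iteration is essentially bookkeeping; it even yields slightly sharper numerical constants (your argument for \eqref{eq:SmallMaximum} gives roughly $\delta\leq 4\eps/(1-2\eps)$, comfortably below $27\eps$). What the paper's version buys is a clean, self-contained ``single-shot'' inclusion (its Theorem~3.4) that is conveniently recorded as a quotable result in its own right and gives exponential decay directly rather than by induction. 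A minor remark: when computing the $\Lambda$-count of $H_1$ you wrote $(a+2b)-2b=a$, ignoring the double-counting of $v$; the exact accounting gives $|V(H_1)\cap\Lambda|=|V(T)\cap\Lambda|-|V(H_2)\cap\Lambda|+\mathbbm{1}(v\in\Lambda)\geq a+1$, so your bound is actually strict, but since all you need is $\geq a$ the imprecision is harmless. You also correctly flag and dispatch the two points that actually require care: robustness to unbounded degree (handled because no single child-subtree of $v$ reaches $b$) and the infinite-graph BK application (handled by finite exhaustion, exactly as the paper does).
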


We will deduce this theorem as a corollary of the following more general inequality.

\begin{theorem}
\label{thm:maximum_tail}
Let $G=(V,E,J)$ be a countable weighted graph and let $\Lambda \subseteq V$ be finite and non-empty. Then the inequalities
\begin{align}
&&\bP_\beta\bigl(|K_\mathrm{max}(\Lambda)| \geq 3^k \lambda \bigr) &\leq \bP_\beta\bigl(|K_\mathrm{max}(\Lambda)| \geq \lambda  \bigr)^{3^{k-1}+1}
\label{eq:maximum_tail_unrooted}
 \\
\text{and} &&\bP_\beta\bigl(|K_u \cap \Lambda| \geq 3^k \lambda \bigr) &\leq \bP_\beta\bigl(|K_\mathrm{max}(\Lambda)| \geq \lambda \bigr)^{3^{k-1}} \bP_\beta\bigl(|K_u \cap \Lambda| \geq \lambda \bigr)
\label{eq:maximum_tail_rooted}
\end{align}
hold for every $\beta \geq 0$, $\lambda \geq 1$, $k\geq 0$, and $u\in V$.
\end{theorem}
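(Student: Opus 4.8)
The plan is to deduce both inequalities from the van den Berg--Kesten (BK) inequality together with a purely deterministic fact about trees. For $k=0$ both inequalities hold trivially (or are read with the convention $3^{-1}:=0$), so assume $k\geq 1$ and put $m=3^{k-1}$, a positive integer, so that $3^k\lambda=3m\lambda$. It then suffices to prove, for every integer $m\geq 1$ and every $\lambda\geq 1$, the event inclusions
\[
\bigl\{|K_{\mathrm{max}}(\Lambda)|\geq 3m\lambda\bigr\}\subseteq\bigl\{|K_{\mathrm{max}}(\Lambda)|\geq\lambda\bigr\}^{\circ(m+1)}
\quad\text{and}\quad
\bigl\{|K_u\cap\Lambda|\geq 3m\lambda\bigr\}\subseteq\bigl\{|K_u\cap\Lambda|\geq\lambda\bigr\}\circ\bigl\{|K_{\mathrm{max}}(\Lambda)|\geq\lambda\bigr\}^{\circ m},
\]
where $\circ$ denotes disjoint occurrence of events. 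All events appearing here are increasing, so the BK inequality yields $\bP_\beta(|K_{\mathrm{max}}(\Lambda)|\geq 3m\lambda)\leq\bP_\beta(|K_{\mathrm{max}}(\Lambda)|\geq\lambda)^{m+1}$ and $\bP_\beta(|K_u\cap\Lambda|\geq 3m\lambda)\leq\bP_\beta(|K_u\cap\Lambda|\geq\lambda)\,\bP_\beta(|K_{\mathrm{max}}(\Lambda)|\geq\lambda)^{m}$, which are exactly \eqref{eq:maximum_tail_unrooted} and \eqref{eq:maximum_tail_rooted} after substituting $m=3^{k-1}$. (The witnesses produced below are finite edge sets, so the standard approximation by finite sub-configurations makes the BK inequality applicable even though $E$ may be infinite.)

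The two inclusions will follow from the deterministic lemma: \emph{if $T$ is a finite tree and $S\subseteq V(T)$ with $|S|\geq 3m\lambda$, then there are $m+1$ pairwise edge-disjoint connected subtrees $W_0,\dots,W_m$ of $T$ with $|W_i\cap S|\geq\lambda$ for each $i$, and moreover, given any prescribed vertex $v_0$ of $T$, one may arrange that $v_0\in W_0$.} Granting this, on the event $\{|K_{\mathrm{max}}(\Lambda)|\geq 3m\lambda\}$ pick a cluster $K$ with $|K\cap\Lambda|\geq 3m\lambda$, let $T$ be a spanning tree of $K$ consisting of open edges, and apply the lemma with $S=K\cap\Lambda$: the edges of each $W_i$ are open and $W_i$ contains at least $\lambda$ vertices of $\Lambda$ in a single open cluster, so $E(W_i)$ witnesses $\{|K_{\mathrm{max}}(\Lambda)|\geq\lambda\}$, and the $E(W_i)$ are disjoint. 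On $\{|K_u\cap\Lambda|\geq 3m\lambda\}$ one argues identically with $K=K_u$ and $v_0=u$, noting that $E(W_0)$, which contains $u$, witnesses the stronger event $\{|K_u\cap\Lambda|\geq\lambda\}$.

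It remains to prove the tree lemma. For the unrooted part I would use a greedy bottom-up decomposition: root $T$ anywhere, process the vertices in post-order, and at each vertex $v$ repeatedly emit a piece consisting of $v$ together with a minimal collection of the residual subtrees dangling from its already-processed children, stopping each time the piece contains between $\lambda$ and $2\lambda$ vertices of $S$; what remains (of $S$-size below $\lambda$) is forwarded to the parent, while $v$ is kept available as a connector for later pieces. Every vertex of $S$ is thereby placed either into some piece or into the residual surviving at the root; each piece carries at most $2\lambda$ vertices of $S$ and the root residual fewer than $\lambda$, so a short count shows that at least $m+1$ edge-disjoint pieces are emitted whenever $|S|\geq 3m\lambda$. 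For the rooted refinement no new decomposition is needed: among the emitted pieces $P_1,\dots,P_p$ (with $p\geq m+1$), let $P_1$ be one at minimal $T$-distance from $v_0$ and let $\pi$ be the $T$-path from $v_0$ to $P_1$; any piece meeting $\pi$ in an edge would contain a vertex strictly closer to $v_0$ than $P_1$, contradicting minimality, so $W_0:=P_1\cup\pi$ is a connected subtree through $v_0$ with $|W_0\cap S|\geq|P_1\cap S|\geq\lambda$ that is edge-disjoint from $P_2,\dots,P_p$; take $W_0,P_2,\dots,P_{m+1}$.

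The step I expect to require the most care is the combinatorial lemma itself. One must allow vertices of arbitrarily large degree — at such a vertex many children residuals can accumulate before a piece can be emitted, so several edge-disjoint pieces may be emitted that share only that one vertex — and one must check that no element of $S$ is double-counted across pieces and that the counting genuinely produces $m+1$ rather than merely $m$ pieces for every $m\geq 1$, the cases $m\in\{1,2\}$ being the tight ones. Treating integer $\lambda$ first is cleanest; the passage to real $\lambda\geq 1$ is a routine adjustment (replace $\lambda$ by $\lceil\lambda\rceil$, and in the few borderline small cases argue directly by pairing up vertices of $S$ along edge-disjoint paths).
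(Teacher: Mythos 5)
Your proof is correct and matches the paper's high-level reduction — realize the larger-volume event as a disjoint occurrence of smaller-volume events on a spanning tree of the cluster, then invoke the BK inequality — but the combinatorial input you use is genuinely different. The paper's \cref{lem:divide_and_conquer} produces a full \emph{partition} of the tree's edge set into connected pieces, each carrying between $3^{-k}|A|$ and $3^{-k+1}|A|$ vertices of $A$; it is proved by a recursive two-way split (any tree with $|A|\geq 3$ splits into two connected pieces with $A$-mass in $[\frac{1}{3}|A|,\frac{2}{3}|A|]$) followed by a minimality argument over all admissible partitions, and the two-sided mass bound is what yields the piece count $m\geq 3^{k-1}+1$. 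Because the pieces partition the entire edge set, every vertex is incident to one of them, so $u\in V_i$ for some $i$ comes for free and handles the rooted inclusion with no extra work. Your greedy bottom-up decomposition yields only edge-disjoint subtrees that need not exhaust the tree, so you need the separate path-extension argument to route $u$ into a piece; that argument is sound, and your count (each emitted piece is charged $<2\lambda$, the root residual $<\lambda$, so $p>(3m-1)/2\geq m$ and hence $p\geq m+1$) works for all $m\geq 1$ once the bookkeeping bills each $S$-vertex to exactly one piece, the caveat you flag. The tradeoff is that your construction is more directly algorithmic but needs that charging care, while the paper's partition lemma packages vertex-coverage and the piece-count into a single clean statement at the cost of proving the two-sided bounds. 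A minor remark on the theorem statement itself: read literally with exponent $3^{k-1}+1=4/3$, inequality \eqref{eq:maximum_tail_unrooted} is false at $k=0$ whenever $0<\bP_\beta(|K_\mathrm{max}(\Lambda)|\geq\lambda)<1$, so the intended range is $k\geq 1$ (which is all either proof actually establishes); your convention that $k=0$ is trivial is the right reading, and the paper only ever applies the bound with $k\geq 1$.
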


(This theorem does \emph{not} require $\lambda$ to be an integer.)

\begin{proof}[Proof of \cref{thm:universaltightness} given \cref{thm:maximum_tail}]
The inequalities \eqref{eq:BigClusterUnrooted} and \eqref{eq:BigClusterRooted} are trivial when $\alpha \leq 9$, while for $\alpha \geq 9$ they follow immediately from \eqref{eq:maximum_tail_unrooted} and \eqref{eq:maximum_tail_rooted} by taking $\lambda=M_\beta(\Lambda)$ and $k=\lfloor \log_3 \alpha \rfloor \geq 1$ and using that $3^{\lfloor \log_3 \alpha \rfloor-1} \geq \alpha /9$. We now turn to \eqref{eq:SmallMaximum}. Write $M=M_\beta(\Lambda)$. The inequality is trivial if $\eps M <1$ or $9 \eps \geq 1$, so we may assume that $M \geq 1/ \eps \geq 9$. The definitions ensure that $\bP_\beta(|K_\mathrm{max}(\Lambda)|\geq M-1) \geq e^{-1}$. Let $k=\lfloor \log_3 (1/\eps)\rfloor-1$, so that  $3^{-k}(M-1) \geq 3\eps (M-1) \geq \eps M$. The inequality \eqref{eq:maximum_tail_unrooted}
implies that
\[
\bP_\beta\Bigl(|K_\mathrm{max}(\Lambda)| \geq M -1 \Bigr)
\leq 
\bP_\beta\Bigl(|K_\mathrm{max}(\Lambda)| \geq 3^{-k} (M-1) \Bigr)^{3^{k-1}} \leq 
\bP_\beta\Bigl(|K_\mathrm{max}(\Lambda)| \geq  \eps M \Bigr)^{1/(27\eps)},
\]
which can be rearranged to yield that
\[
\bP_\beta\Bigl(|K_\mathrm{max}(\Lambda)| <  \eps M \Bigr) \leq 1-\bP_\beta\Bigl(|K_\mathrm{max}(\Lambda)| \geq M -1 \Bigr)^{27\eps} \leq 1- e^{-27\eps} \leq 27\eps
\]
as claimed, where we used that $1-e^{-x} \leq x$ in the third inequality.
\end{proof}

We now turn to the proof of \cref{thm:maximum_tail}. We will deduce the theorem as a consequence of the BK inequality together with the following combinatorial lemma.

\begin{lemma}
\label{lem:divide_and_conquer}
Let $G=(V,E)$ be a connected, locally finite graph, let $k\geq 1$, and let $A$ be a finite subset of $V$ such that $|A| \geq 3^k$. Then there exists $m \geq 3^{k-1}+1$ and a collection $\{E_i : 1 \leq i \leq m\}$ of disjoint, non-empty subsets of $E$ such that the following hold:
\begin{enumerate}
  \item For each $1\leq i \leq m$, the subgraph of $G$ spanned by $E_i$ is connected.
  \item Every vertex in $V$ is incident to some edge in $\bigcup_{i=1}^{m} E_i$.
  \item The set $V_i$ of vertices incident to an edge of $E_i$ satisfies
\[
3^{-k}  \leq \frac{|A \cap V_i|}{|A|} < 3^{-k+1}
\]
for each $1 \leq i \leq m$.
\end{enumerate}
\end{lemma}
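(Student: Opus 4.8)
The plan is to reduce to a spanning tree and recursively chop it into edge-disjoint connected pieces of controlled weight, where we assign a vertex weight $1$ if it lies in $A$ and $0$ otherwise, and define the weight of a piece to be the number of $A$-vertices it touches. First I would pass to a spanning tree $T$ of $G$, and then to the finite subtree $T_0\subseteq T$ spanned by the union of the $T$-geodesics joining a fixed vertex of $A$ to each other vertex of $A$; thus $T_0$ is finite, $A\subseteq V(T_0)$, and $|V(T_0)|\geq|A|\geq 3^k\geq 3$, so $T_0$ has at least one edge. Writing $N=|A|$ and $\tau:=3^{-k}N\geq 1$, the heart of the matter is the following sub-claim: \emph{if $S$ is a finite tree with at least two vertices, $B\subseteq V(S)$, and $|B|\geq\tau$, then $E(S)$ can be written as a disjoint union of non-empty edge sets $P_1,\dots,P_m$, each spanning a connected subgraph, with $\bigcup_i V(P_i)=V(S)$ and $\tau\leq|B\cap V(P_i)|<3\tau$ for every $i$.}

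I would prove the sub-claim by induction on $|E(S)|$. If $|B|<3\tau$, output all of $E(S)$ as a single piece. Otherwise root $S$ arbitrarily and choose a deepest vertex $v$ with $|B\cap V(S_v)|\geq\tau$ (the root qualifies, so $v$ exists); every child $c$ of $v$ then satisfies $|B\cap V(S_c)|<\tau$. If $v$ is a leaf — which forces $\tau=1$, $v\in B$, and, since $|B|\geq 3$, $v\neq$ root — split off the single edge from $v$ to its parent as a piece (weight $1$ or $2$, hence in $[\tau,3\tau)$) and recurse on $S-v$. Otherwise greedily attach child-subtrees of $v$ to $v$, stopping at the first index $j\geq 1$ for which $\{v\}\cup V(S_{c_1})\cup\dots\cup V(S_{c_j})$ has $B$-weight $\geq\tau$ (legitimate since $v$ has a child); because each child-subtree has weight $<\tau$, the resulting connected piece $P$, which contains the edge $vc_1$, has weight in $[\tau,2\tau)\subseteq[\tau,3\tau)$. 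Deleting $E(P)$ isolates exactly the vertices of $\bigcup_{i\leq j}V(S_{c_i})$ and leaves a subtree $S'\ni v$ with strictly fewer edges, at least two vertices (if $v$ is the root then not all children were used, else $|B\cap V(S_v)|<2\tau$ contradicts $\geq 3\tau$; if $v$ is not the root then $S'$ contains $v$ and its parent), and $B$-weight $|B|-(|B\cap V(P)|-|B\cap\{v\}|)>3\tau-2\tau=\tau$. Applying the inductive hypothesis to $S'$ and adjoining $P$, edge-disjointness, connectedness, the coverage $V(S)=V(S')\cup V(P)$, and the weight bounds all carry over.

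Applying the sub-claim to $S=T_0$, $B=A$ gives pieces $P_1,\dots,P_m\subseteq E(T_0)$. To cover the remaining vertices of $V$ (relevant when $T_0\subsetneq T$, in particular whenever $G$ is infinite), note that $T-E(T_0)$ has, for each $x\in V(T_0)$, a component $H_x$ meeting $V(T_0)$ only in $x$; these components partition $V$ and partition $E(T)\setminus E(T_0)$. Adjoin each $E(H_x)$ to some piece $P_i$ with $x\in V(P_i)$. Since $A\subseteq V(T_0)$, the only vertex of $H_x$ that can lie in $A$ is $x$, which is already in $V(P_i)$, so this enlargement preserves the $A$-weights and the connectedness, and the enlarged pieces $E_1,\dots,E_m$ now cover all of $V$. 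Finally, since $\bigcup_i V_i=V\supseteq A$ and each piece has $|A\cap V_i|<3\tau=3^{-k+1}N$, we get $N\leq\sum_i|A\cap V_i|<3^{-k+1}Nm$, i.e.\ $m\geq 3^{k-1}+1$, while $3^{-k}\leq|A\cap V_i|/N<3^{-k+1}$ by construction.

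The main obstacle is the induction in the sub-claim, specifically: (i) guaranteeing that each piece is non-empty (contains an edge) rather than a stray single vertex — this is exactly what forces the separate treatment of the case where the chosen $v$ is a leaf, which occurs precisely when $\tau=1$, i.e.\ $|A|=3^k$; and (ii) checking that after removing $E(P)$ the residual tree $S'$ still has at least two vertices and weight at least $\tau$, so the induction can continue and every piece lands in the half-open window $[\tau,3\tau)$ rather than at its right endpoint. The reduction steps (spanning tree, finite Steiner subtree, re-attaching the infinite part) are routine once one observes that enlarging a piece by edges disjoint from $A$ changes neither its connectedness nor its $A$-weight.
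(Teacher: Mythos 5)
Your proof is correct, and the argument is genuinely different from the paper's. Both begin by passing to a spanning tree $T$, but then diverge. The paper first proves a \emph{halving lemma}: a locally finite tree whose vertex set meets $A$ in at least three points admits a good partition of its edge set into two connected pieces, each of whose vertex set carries between $1/3$ and $2/3$ of the $A$-mass; this is established by growing a connected edge set $W_n$ from a root, adding either a single edge or a ``light'' descendant subtree at each step, and stopping the first time the $A$-mass of $V(W_n)$ exceeds $|A|/3$. The full decomposition is then obtained non-constructively by an extremal argument: among all good partitions of $E(T)$ whose pieces each carry mass at least $3^{-k}|A|$, take one that is minimal for a lexicographic order on the sorted mass vectors, and observe that the heaviest piece must carry mass strictly below $3^{-k+1}|A|$ --- otherwise the halving lemma, applied to that piece, would produce a strictly smaller admissible partition. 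Your argument instead produces the partition directly by a single greedy induction: you first restrict to the finite Steiner subtree $T_0$ of $T$ spanned by $A$ (a clean way to sidestep the infinite/local-finiteness bookkeeping that the paper handles inside its halving-lemma iteration), then repeatedly locate the deepest vertex $v$ whose rooted subtree carries $A$-mass at least $\tau=3^{-k}|A|$ and peel off a connected piece of mass in $[\tau,2\tau)$ hanging at $v$, with a separate treatment of the degenerate case where $v$ is a leaf (which forces $\tau=1$); the components of $T\setminus E(T_0)$ carry no $A$-mass and are grafted back onto the pieces afterwards without affecting connectivity, disjointness, or the mass window. Both approaches are valid and of comparable length; yours is more algorithmic and avoids the extremal step, while the paper's halving-lemma formulation isolates a reusable bisection principle. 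One tiny presentational point: your justification ``since $|B|\geq 3$, $v\neq$ root'' in the leaf case should really be unpacked as ``$|V(S)|\geq|B|\geq 3>1$, so a tree on at least two vertices cannot have its root be a leaf of $S$ equal to all of $S_v$''; as written it reads as a non sequitur, though the intended reasoning is sound.
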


When $G$ is finite, the proof of this lemma can be used to derive an explicit divide-and-conquer algorithm for finding such a collection of sets $E_1,\ldots,E_{m}$ after taking a spanning tree of $G$.

\begin{proof}[Proof of \cref{lem:divide_and_conquer}]
We may without loss of generality assume that $G=T$ is a tree, taking a spanning tree of $G$ otherwise.  In this case we will prove the stronger claim that the sets $\{E_i : 1 \leq i \leq m\}$ can be taken to be a partition of $E$. (Here, a \textbf{partition} of $E$ is a set of disjoint subsets of $E$ whose union is $E$.)
 We say that a partition $\pi$ of $E$ is \textbf{good} if each piece of $\pi$ spans a connected subgraph of $T$.


We first prove that if $T=(V,E)$ is a locally finite tree and $A \subseteq V$ has $3\leq |A| < \infty$ then there exists a good partition of $E$ into two non-empty sets $E_1$ and $E_2$ such that if $V(E_i)$ denotes the set of vertices incident to at least one edge of $E_i$ then
\[
\frac{1}{3}|A| \leq |A \cap V(E_i)| \leq \left\lceil \frac{2}{3}|A|\right\rceil
\]
for each $i=1,2$.
 Let $\rho$ be a vertex of $T$. We root $T$ at $\rho$, and call a vertex $v$ a \textbf{descendant} of an edge $e$ if the unique shortest path from $\rho$ to $v$ contains $e$. We will iteratively define a sequence $(v_n,W_n)_{n= 0}^N$, where $1 \leq N \leq \infty$, $v_n \in V$, and $W_n \subseteq E$ for each $0 \leq n \leq N$.  Start by setting $v_0=\rho$ and $W_0=\emptyset$. At each intermediate stage $0<n<N$ of the sequence, $W_n$ and $W_n^c$ will both be non-empty and span connected subgraphs of $T$, while $v_n$ will be incident to edges of both $W_n$ and $W_n^c$. Given $(v_n,W_n)$ for some $n\geq 0$, we let $V_n$ be the set of vertices of $T$ that are either equal to $\rho$ or incident to some edge of $W_n$, and define $(v_{n+1},W_{n+1})$ using the following procedure, which is illustrated in \cref{fig:centroid_example}.
 \begin{figure}[t]
 \centering
\includegraphics[width=\textwidth]{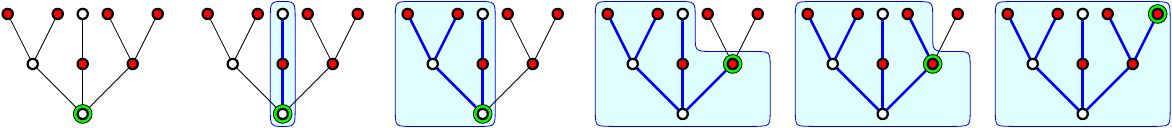}
\caption{A sequence of good partitions of a tree as constructed in the proof of \cref{lem:divide_and_conquer}. Red vertices represent elements of the distinguished set $A$. At each stage, edges belonging to $W_n$ are thick, blue, and contained in the blue shaded region, while edges belonging to the complement $W_n^c$ are thin and black. The vertex $v_n$, which lies at the boundary of $W_n$ and $W_n^c$, is represented with a green outer ring. In this example, $W_n$ is first incident to more than $1/3$ of the vertices of $A$ when $n=2$, in which case $W_n$ and $W_n^c$ are both incident to exactly $3$ vertices of $A$.}
\label{fig:centroid_example}
\end{figure}
\begin{enumerate}
\item If $v_n$ has exactly one edge $e\in W_n^c$ adjacent to it, we set $W_{n+1} = W_n \cup \{e\}$ and set $v_{n+1}$ to be the other endpoint of this edge. If $W_{n+1}=E$ then we set $N=n+1$ and terminate the sequence.
\item Otherwise, $v_{n+1}$ has at least two edges of $W_n^c$ adjacent to it. Enumerate these edges $e_1,\ldots,e_\ell$, and let $D_i$ be the set of descendants of $e_i$ for each $1\leq i \leq \ell$. Since $\sum_{i=1}^\ell |D_i \cap A| = |V_n^c \cap A|$, there must exist $1\leq i \leq \ell$ such that $|D_i \cap A| \leq |V_n^c \cap A|/2$. Choose one such $i$, set $W_{n+1}$ to be the union of $W_n$ with the set of edges incident to $D_i$ (i.e., having at least one endpoint in $D_i$), and set $v_{n+1}=v_n$.
\end{enumerate}

We may verify by induction that $W_n$ and $W_n^c$ are indeed both non-empty and span connected subgraphs of $T$ for every $0< n < N$ as claimed, so that $\{W_n,W_n^c\}$ is a good partition of $E$ for every $0< n < N$. Moreover, the assumption that $T$ is locally finite implies that $\bigcup_{n=0}^N W_n = E$ and hence that $\bigcup_{n=0}^N V_n = V$. Since $A$ is finite and $\bigcup_{n=0}^N V_n = V$, there exists a finite time $N'\leq N$ such that $V_n$ contains $A$ for every $ N' \leq n \leq N$. 
Observe that the set $\{0\leq  n \leq N' : |V_n \cap A| > |A|/3\}$ contains $N'$ but does not contain $0$ since $|A| \geq 3$. Letting $m \geq 1$ be the minimal element of this set, we have that
\begin{multline*}
\frac{1}{3}|A| < |V_{m} \cap A| \leq |V_{m-1} \cap A| + \max \Bigl\{ 1,\, \frac{1}{2}|V_{m-1}^c \cap A| \Bigr\} \\= \max \Bigl\{ |V_{m-1} \cap A| + 1,\, \frac{1}{2}\left(|A|+|V_{m-1} \cap A|\right) \Bigr\} \leq \frac{2}{3}|A|,
\end{multline*}
where we used that $|A| \geq 3$ in the final inequality. It follows in particular that $0 < m < N$.
Moreover, if $V_m'$ denotes the set of vertices incident to an edge of $W_m^c$ then $V_m \cup V_m'=V$ and $|V_m \cap V_m'|\leq 1$, so that
\[
\frac{1}{3} |A| \leq |A| - |V_m \cap A| \leq |V_m'\cap A| \leq |A|- |V_m \cap A| +1 \leq \left\lceil \frac{2}{3}|A|\right\rceil,
\]
where the final inequality follows since $|V_m \cap A| > \frac{1}{3}|A|$ is an integer. It follows that $\{W_{m},W_{m}^c\}$ is a good partition of $E$ with the desired properties.


We now apply the claim proven in the previous paragraph to complete the proof of the lemma. 
Let $T=(V,E)$ be a locally finite tree, let $k\geq 1$, and let $A \subseteq V$ satisfy $3^k \leq |A| < \infty$. For each subset $K$ of $E$, write $V(K)$ for the set of vertices incident to an edge of $K$. Construct a sequence $\pi_0,\pi_1,\ldots$ of good partitions of $E$ recursively as follows. Set $\pi_0=\{E\}$ to be the trivial partition. For each $i\geq 0$, define $\pi_{i+1}$ by retaining those pieces $W$ of $\pi_i$ satisfying $|V(W) \cap A| < 3^{k-1} |A|$ and splitting each piece $W$  of $\pi_i$ with $|V(W) \cap A| \geq 3^{-k+1}|A|$ into two pieces $W_1$ and $W_2$ that each span connected subgraphs of $T$ and satisfy 
\[ \frac{1}{3}|V(W) \cap A| \leq |V(W_1) \cap A|,|V(W_2) \cap A| \leq \left \lceil\frac{2}{3}|V(W) \cap A|\right\rceil.\] This can be done by applying the claim proven in the previous paragraph to the subgraph of $T$ spanned by the piece $W$. We have by induction on $i$ that 
\begin{align*}
\min \{V(W) \cap A : W \in \pi_{i} \} \geq 3^{-k} |A| 
\end{align*}
for every $i\geq 0$. Moreover, noting that $\lceil 2x/3 \rceil < 9x/10$ for every integer $x\geq 3$, we also have by induction on $i$ that
\begin{align*}
\max \{V(W) \cap A : W \in \pi_{i} \} < \max\left\{3^{-k+1}|A|,\, \left(\frac{9}{10}\right)^i |A| \right\}
\end{align*}
for every $i \geq 0$. It follows that there exists $i_0<\infty$ such that every piece $W$ of the good partition $\pi_{i_0}$ satisfies $3^{-k}|A| \leq |V(W) \cap A| < 3^{-k+1}|A|$ as required. 
Letting $m=|\pi_{i_0}|$ we have that $3^{-k+1} |A| m > \sum_{W \in \pi_{i_0}} |V(W) \cap A| \geq |A|$ and hence that $m \geq 3^{k-1}+1$ as desired.
\end{proof}


\begin{proof}[Proof of \cref{thm:maximum_tail}]
Let $G=(V,E,J)$ be a finite weighted graph. 
 Recall that
 if $A_1,\ldots,A_k$ are (not necessarily distinct), increasing subsets of $\{0,1\}^E$, the \textbf{disjoint occurrence} $A_1 \circ \cdots \circ A_k$ is the set of $\omega \in \{0,1\}^E$ such that there exist disjoint sets $W_1,\ldots,W_k \subseteq \{e:\omega(e)=1\}$ such that
 \[
(\omega'(e)=1 \text{ for every $e\in W_i$}) \Rightarrow (\omega' \in A_i) \qquad  \text{for every $\omega' \in \{0,1\}^E$ and $1 \leq i \leq k$.}
 \]
 (Here, a subset of $\{0,1\}^E$ is said to be \textbf{increasing} if $\omega \in A \Rightarrow \omega' \in A$ for every $\omega,\omega' \in \{0,1\}^E$ such that $\omega'(e)\geq \omega(e)$ for every $e\in E$.)
The sets $W_1,\ldots,W_k$ are known as \textbf{disjoint witnesses} for the events $A_1,\ldots,A_k$. The van den Berg and Kesten inequality \cite{MR799280}, a.k.a.\ the \textbf{BK inequality}, states that if $G=(V,E,J)$ is a finite weighted graph and $A_1,\ldots,A_k \subseteq \{0,1\}^E$ are increasing events then
\[
\bP_\beta(A_1 \circ \cdots \circ A_k) \leq \prod_{i=1}^k \bP_\beta(A_i)
\]
for every $\beta \geq 0$. See \cite[Chapter 2.3]{grimmett2010percolation} for further background.

Let $G=(V,E,J)$ be a finite weighted graph and let $\Lambda \subseteq V$. Suppose that the event $\{|K_\mathrm{max}(\Lambda)|\geq 3^k \lambda\}$ holds for some $\lambda \geq 1$ and $k\geq 1$, and let $v\in V$ be such that $|K_v \cap \Lambda| \geq 3^k \lambda$. Applying \cref{lem:divide_and_conquer} to $K_v$ yields that there exists $m\geq 3^{k-1}+1$ and $m$ disjoint sets of open edges $E_1,\ldots,E_{m}$, each spanning a connected subgraph of $K_v$, such that the set $V_i$ of vertices incident to an edge of $E_i$ satisfies $|V_i \cap \Lambda| \geq \lambda$ for every $1\leq i \leq m$. It follows that the sets $E_1,\ldots,E_{m}$ are all witnesses for the event $\{|K_\mathrm{max}(\Lambda)|\geq \lambda\}$, and since these sets are all disjoint we deduce that
\begin{equation}
\label{eq:BK_inclusion1}
\{
|K_\mathrm{max}(\Lambda)|\geq 3^k \lambda
\} \subseteq \underbrace{\{|K_\mathrm{max}(\Lambda)|\geq \lambda\} \circ \cdots \circ \{|K_\mathrm{max}(\Lambda)|\geq \lambda\}}_{\text{$3^{k-1}+1$ copies}}
\end{equation}
for every $\lambda \geq 1$ and $k\geq 1$.  Taking probabilities on both sides and applying the BK inequality yields the claimed inequality \eqref{eq:maximum_tail_unrooted} in the case that $G$ is finite. Now suppose that the event $\{|K_u \cap \Lambda|\geq 3^k \lambda\}$ holds for some $\lambda \geq 1$, $k\geq 1$, and $u\in V$. Similarly to above, applying \cref{lem:divide_and_conquer} to $K_u$ yields that there exists $m \geq 3^{k-1}+1$ and $m$ disjoint sets of open edges $E_1,\ldots,E_{m}$, each spanning a connected subgraph of $K_u$, such that the set $V_i$ of vertices incident to an edge of $E_i$ satisfies $|V_i \cap \Lambda| \geq \lambda$ for every $1\leq i \leq m$ and such that $\bigcup_{i=1}^{m} V_i$ is equal to the vertex set of $K_u$. In particular, $u\in V_i$ for some $1 \leq i \leq m$. Thus, at least one of the sets $E_1,\ldots,E_{m}$ is a witness for the event $\{
|K_u \cap \Lambda|\geq \lambda
\}$, while the remaining sets are all witnesses for the event $\{|K_\mathrm{max}(\Lambda)|\geq \lambda\}$. Since the sets $E_1,\ldots,E_{m}$ are all disjoint, we deduce that
\begin{equation}
\label{eq:BK_inclusion2}
\{
|K_u \cap \Lambda|\geq 3^k \lambda
\} \subseteq \{
|K_u \cap \Lambda|\geq \lambda
\} \circ \underbrace{\{|K_\mathrm{max}(\Lambda)|\geq \lambda\} \circ \cdots \circ \{|K_\mathrm{max}(\Lambda)|\geq \lambda\}}_{\text{$3^{k-1}$ copies}}
\end{equation}
for every $\lambda \geq 1$, $k\geq 1$, and $u \in V$. As before, taking probabilities on both sides  and applying the BK inequality yields the claimed inequality \eqref{eq:maximum_tail_rooted} in the case that $G$ is finite. The infinite cases of \eqref{eq:maximum_tail_unrooted} and \eqref{eq:maximum_tail_rooted} follow straightforwardly from the finite cases by passing to the limit in an exhaustion over finite subgraphs.
\end{proof}


\subsection{Proof of the hyperscaling inequality}
\label{subsec:hyperscalingproof}

We now apply \cref{thm:universaltightness} to prove \cref{thm:hyperscalingsimple}. In fact we will prove the following stronger theorem which also gives control of the maximal cluster size in $\Lambda$ and allows $0\leq \theta<1$.

\begin{thm}
\label{thm:hyperscaling}
There exists a universal continuous function $C:[0,1)\to (0,\infty)$ such that the following holds.
 Let $G=(V,E,J)$ be a countable weighted graph, let $\beta\geq 0$, let $\Lambda \subseteq V$ be finite, and suppose that there exist $A<\infty$ and $0\leq \theta <1$ such that $\bP_\beta( |K_u \cap \Lambda| \geq n) \leq A n^{-\theta}$ for every $u\in V$ and $n \geq 1$. Then 
\[
M_\beta(\Lambda) \leq C(\theta) A^{1/(1+\theta)} |\Lambda|^{1/(1+\theta)} \quad \text{ and } \quad 
\frac{1}{|\Lambda|}\sum_{v \in \Lambda} \bP_\beta(u\leftrightarrow v) 
\leq 
C(\theta) A^{2/(1+\theta)} |\Lambda|^{-2\theta/(1+\theta)}
\]
for every $u\in V$.
\end{thm}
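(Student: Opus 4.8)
The plan is to establish the two bounds in turn, deducing the averaged two-point estimate from the bound on $M_\beta(\Lambda)$. Throughout we may assume without loss of generality that $A\geq 1$, and we write $M=M_\beta(\Lambda)$; note that $M\geq 1$ since $\bP_\beta(|K_\mathrm{max}(\Lambda)|\geq 0)=1>e^{-1}$.

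\emph{Step 1: bounding $M_\beta(\Lambda)$ via a first-moment argument.} The point is that the rooted hypothesis can be upgraded to a bound on $\bP_\beta(|K_\mathrm{max}(\Lambda)|\geq n)$ with the improved exponent $1+\theta$. Let $N_n$ denote the number of clusters $C$ with $|C\cap\Lambda|\geq n$. On the event $\{|K_\mathrm{max}(\Lambda)|\geq n\}$ we have $N_n\geq 1$, while deterministically $N_n\leq \tfrac1n\sum_{C:\,|C\cap\Lambda|\geq n}|C\cap\Lambda| = \tfrac1n\sum_{v\in\Lambda}\mathbf 1(|K_v\cap\Lambda|\geq n)$, since every vertex of such a cluster that lies in $\Lambda$ is counted with multiplicity $|C\cap\Lambda|\geq n$. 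Taking expectations gives
\[
\bP_\beta\bigl(|K_\mathrm{max}(\Lambda)|\geq n\bigr)\leq \frac1n\sum_{v\in\Lambda}\bP_\beta\bigl(|K_v\cap\Lambda|\geq n\bigr)\leq A|\Lambda|\,n^{-(1+\theta)}\qquad\text{for all }n\geq1.
\]
The right-hand side drops below $e^{-1}$ once $n\geq (eA|\Lambda|)^{1/(1+\theta)}$, so $M\leq \lceil (eA|\Lambda|)^{1/(1+\theta)}\rceil\leq 2e^{1/(1+\theta)}A^{1/(1+\theta)}|\Lambda|^{1/(1+\theta)}$, which is the first claimed bound.

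\emph{Step 2: bounding $\bE_\beta|K_u\cap\Lambda|$ using universal tightness.} Fix $u\in V$ and write $\sum_{v\in\Lambda}\bP_\beta(u\leftrightarrow v)=\bE_\beta|K_u\cap\Lambda|=\sum_{n\geq 1}\bP_\beta(|K_u\cap\Lambda|\geq n)$, which we split at $M$. For the head, $\sum_{n=1}^{\lceil M\rceil}\bP_\beta(|K_u\cap\Lambda|\geq n)\leq A\sum_{n=1}^{\lceil M\rceil}n^{-\theta}\leq \tfrac{2}{1-\theta}\,A\,M^{1-\theta}$, using $M\geq 1$. For the tail, we invoke the rooted tightness inequality \eqref{eq:BigClusterRooted} of \cref{thm:universaltightness}, namely $\bP_\beta(|K_u\cap\Lambda|\geq \alpha M)\leq e\,\bP_\beta(|K_u\cap\Lambda|\geq M)\,e^{-\alpha/9}$ for $\alpha\geq 1$: bounding the tail sum by $\int_M^\infty\bP_\beta(|K_u\cap\Lambda|\geq t)\,dt = M\int_1^\infty \bP_\beta(|K_u\cap\Lambda|\geq \alpha M)\,d\alpha$, applying this inequality, and then applying the hypothesis once more at scale $M$ (so that $\bP_\beta(|K_u\cap\Lambda|\geq M)\leq AM^{-\theta}$), the tail is at most $9e^{8/9}A\,M^{1-\theta}$. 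Altogether $\bE_\beta|K_u\cap\Lambda|\leq C_2(\theta)\,A\,M^{1-\theta}$ for a continuous $C_2:[0,1)\to(0,\infty)$.

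\emph{Step 3: combine.} Substituting the bound $M\leq 2e^{1/(1+\theta)}A^{1/(1+\theta)}|\Lambda|^{1/(1+\theta)}$ from Step 1 and using $1+\tfrac{1-\theta}{1+\theta}=\tfrac{2}{1+\theta}$ gives $\bE_\beta|K_u\cap\Lambda|\leq C(\theta)A^{2/(1+\theta)}|\Lambda|^{(1-\theta)/(1+\theta)}$ for a continuous $C$; dividing by $|\Lambda|$ and using $\tfrac{1-\theta}{1+\theta}-1=-\tfrac{2\theta}{1+\theta}$ yields the second claimed bound. The resulting constant $C(\theta)$ is continuous on $[0,1)$ and blows up as $\theta\uparrow 1$ (through the $\tfrac{1}{1-\theta}$ in the head estimate), consistently with the statement. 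I do not expect a genuine obstacle here: the estimates are routine, and the only care needed is in tracking the $\theta$-dependence of constants to obtain a single continuous $C$. The conceptual content worth emphasising is that the entire improvement over the primitive bound $\bE_\beta|K_u\cap\Lambda|\leq C(\theta)A|\Lambda|^{1-\theta}$ of \eqref{eq:primitive} comes from the fact that the typical maximum cluster size $M_\beta(\Lambda)$ is of order $|\Lambda|^{1/(1+\theta)}\ll|\Lambda|$, and that inequality \eqref{eq:BigClusterRooted} is exactly what converts ``$|K_u\cap\Lambda|$ has a heavy tail out to scale $|\Lambda|$'' into ``$|K_u\cap\Lambda|$ has a heavy tail only out to scale $M_\beta(\Lambda)$, with exponential decay beyond.''
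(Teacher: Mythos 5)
Your proof is correct, but your Step 1 follows a genuinely different route from the paper's. The paper first uses its Corollary~\ref{cor:obvious} (an exponential-tail refinement of the rooted tightness bound \eqref{eq:BigClusterRooted}) to establish $\bE_\beta|K_u\cap\Lambda|\lesssim A M^{1-\theta}$, \emph{then} obtains the bound on $M$ by comparing the resulting upper bound $\sum_{u,v\in\Lambda}\bP_\beta(u\leftrightarrow v)\lesssim A M^{1-\theta}|\Lambda|$ against a second-moment lower bound $\sum_{u,v\in\Lambda}\bP_\beta(u\leftrightarrow v)\geq\bE_\beta[|K_\mathrm{max}(\Lambda)|^2]\geq(M-1)^2\bP_\beta(|K_\mathrm{max}(\Lambda)|\geq M-1)\geq\tfrac{1}{4e}M^2$, rearranging $M^2\lesssim AM^{1-\theta}|\Lambda|$ to get $M^{1+\theta}\lesssim A|\Lambda|$. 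You instead bound $M$ \emph{before} touching expectations, via a clean first-moment cluster count: the number $N_n$ of clusters intersecting $\Lambda$ in at least $n$ vertices satisfies $N_n\leq n^{-1}\sum_{v\in\Lambda}\mathbf 1(|K_v\cap\Lambda|\geq n)$, hence $\bP_\beta(|K_\mathrm{max}(\Lambda)|\geq n)\leq A|\Lambda|n^{-(1+\theta)}$, giving the same $M\lesssim(A|\Lambda|)^{1/(1+\theta)}$ directly, without any use of \cref{thm:universaltightness}. Your Step 2, which converts the rooted tightness inequality \eqref{eq:BigClusterRooted} into $\bE_\beta|K_u\cap\Lambda|\lesssim A M^{1-\theta}$ by splitting the sum at scale $M$, is essentially the same computation that appears in the paper's \cref{cor:obvious} together with \eqref{eq:lambda_susceptibility}. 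Your route is arguably more elementary and conceptually cleaner: it avoids the mild circularity of the paper's argument (where the $\bE|K_u\cap\Lambda|$ upper bound is used to derive the bound on $M$, and the bound on $M$ is then substituted back into the very same upper bound), and it exposes the fact that the $|\Lambda|^{1/(1+\theta)}$ bound on the maximum cluster size is a first-moment consequence of the pointwise tail hypothesis needing no BK input, while the BK-based universal tightness is what is actually responsible for the gain over the primitive estimate \eqref{eq:primitive}.
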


We begin by writing down the following immediate corollary of \cref{thm:universaltightness}.


\begin{corollary}
\label{cor:obvious}
 Let $G=(V,E,J)$ be a weighted graph and let $\beta\geq 0$. Let $u\in V$ and $\Lambda \subseteq V$ be finite, and suppose that there exist constants $A<\infty$ and $0\leq \theta <1$ such that $\bP_\beta( |K_u \cap \Lambda| \geq n) \leq A n^{-\theta}$ for every $n \geq 1$. Then 
 \vspace{-0.3em}
 \[
\bP_\beta(|K_u \cap \Lambda| \geq n) \leq e A \left(\frac{18}{n}\right)^{\theta} \!\exp\left[-\frac{n}{18 M_\beta(\Lambda)}\right] \qquad \text{for every $n\geq 1$.}
 \]
\end{corollary}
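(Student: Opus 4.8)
The plan is to read this off directly from the rooted estimate \eqref{eq:BigClusterRooted} of \cref{thm:universaltightness}, after separating the range of $n$ into a small regime, where the hypothesised polynomial bound is already strong enough on its own, and a large regime, where the exponential decay supplied by \eqref{eq:BigClusterRooted} takes over. Write $M=M_\beta(\Lambda)$ throughout. Since $\Lambda$ is finite and non-empty we have $|K_\mathrm{max}(\Lambda)|\geq 1$ deterministically, so $\bP_\beta(|K_\mathrm{max}(\Lambda)|\geq 1)=1>e^{-1}$ and hence $M\geq 1$; in particular the hypothesised bound $\bP_\beta(|K_u\cap\Lambda|\geq n)\leq An^{-\theta}$ may be evaluated at $n=M$ to give $\bP_\beta(|K_u\cap\Lambda|\geq M)\leq AM^{-\theta}$.

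First I would dispatch the regime $1\leq n\leq 18M$. Here $\exp[-n/(18M)]\geq e^{-1}$, so the claimed right-hand side is at least $eA(18/n)^{\theta}e^{-1}=A\cdot 18^{\theta}n^{-\theta}\geq An^{-\theta}$, using $\theta\geq 0$, and the desired bound follows at once from the hypothesis applied at the given $n$. No appeal to \cref{thm:universaltightness} is needed in this range.

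For the remaining regime $n>18M$ I would apply \eqref{eq:BigClusterRooted} with $\alpha=n/M>18\geq 1$, together with the hypothesis at $M$, to obtain
\[
\bP_\beta\bigl(|K_u\cap\Lambda|\geq n\bigr)\leq e\,\bP_\beta\bigl(|K_u\cap\Lambda|\geq M\bigr)\exp\!\Bigl[-\tfrac{n}{9M}\Bigr]\leq eAM^{-\theta}\exp\!\Bigl[-\tfrac{n}{18M}\Bigr]\exp\!\Bigl[-\tfrac{n}{18M}\Bigr].
\]
It then remains to absorb the factor $M^{-\theta}\exp[-n/(18M)]$ into $(18/n)^{\theta}$, i.e.\ to check $e^{-t}\leq t^{-\theta}$ with $t=n/(18M)>1$; taking logarithms this is $\theta\log t\leq t$, which holds since $0\leq\theta\leq 1$ and $\log t\leq t$ for all $t>0$. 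Substituting this back yields exactly $\bP_\beta(|K_u\cap\Lambda|\geq n)\leq eA(18/n)^{\theta}\exp[-n/(18M)]$, completing the argument.

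I do not expect any genuine obstacle here: all of the substantive work is already contained in \cref{thm:universaltightness}, and the only point requiring care is the bookkeeping of constants — choosing the threshold at $18M$ rather than $9M$ so that, after invoking \eqref{eq:BigClusterRooted}, one retains a spare factor $\exp[-n/(18M)]$ with which to soak up the polynomial prefactor $M^{-\theta}$ once $n\gg M$.
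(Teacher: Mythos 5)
Your proof is correct and follows essentially the same route as the paper's: both handle $n\leq 18M$ by the trivial observation that the claimed bound dominates $An^{-\theta}$ there, and both treat $n>18M$ by invoking the rooted inequality \eqref{eq:BigClusterRooted} with $\alpha=n/M$, splitting $\exp[-n/(9M)]=\exp[-n/(18M)]^2$, and absorbing $M^{-\theta}\exp[-n/(18M)]$ into $(18/n)^\theta$ via the elementary bound $t^\theta e^{-t}\leq 1$ for $t\geq 1$ (the paper phrases this last step as monotonicity of $x^\theta e^{-x/18}$ on $[18,\infty)$, which is the same inequality after rescaling $x=18t$).
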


\begin{proof}[Proof of \cref{cor:obvious}]
Write $M=M_\beta(\Lambda)$. 
The claim is trivial when $n \leq 18 M$. If not, we have by \cref{thm:universaltightness} that
\begin{equation*}
\bP(|K_u \cap \Lambda| \geq n) \leq e A M^{-\theta} \exp\left[-\frac{n}{9 M}\right] \leq e A n^{-\theta} \exp\left[-\frac{n}{18 M}\right] \left(\frac{n^{\theta}}{M^{\theta}}\exp\left[-\frac{n}{18M}\right] \right).
\end{equation*}
Using that $x^{\theta} e^{-x/C}$ is decreasing on $[C,\infty)$ yields the claimed inequality.
\end{proof}

\begin{proof}[Proof of \cref{thm:hyperscaling}]
 For each $u\in V$ we can apply \cref{cor:obvious} to compute that
 \begin{align}
 \nonumber
\sum_{v \in \Lambda} \bP_\beta(u\leftrightarrow v) &= \bE_\beta |K_u \cap \Lambda| = \sum_{n\geq 1} \bP_\beta(|K_u \cap \Lambda| \geq n) \leq e A \sum_{n=1}^\infty \left(\frac{18}{n}\right)^{\theta}  \exp\left[-\frac{n}{18 M}\right] \\ &\leq e A \int_0^\infty \left(\frac{18}{t}\right)^{\theta}  \exp\left[-\frac{t}{18 M}\right] \dif t 
= 18 e  \Gamma(1-\theta) A M^{1-\theta} 
\label{eq:lambda_susceptibility}
 \end{align}
 where $\Gamma(\alpha)=\int_0^\infty t^{\alpha-1} e^{-t} \dif t$ is the Gamma function and where we used the change of variables $s=t/(18M)$ in the final equality.
 Summing over $u\in \Lambda$, it follows that
\begin{equation}
\label{eq:max_to_expectation}
\sum_{u,v \in \Lambda} \bP_\beta(u \leftrightarrow v) = \sum_{u\in \Lambda} \bE_\beta |K_u \cap \Lambda| \leq 18 e  \Gamma(1-\theta) A M^{1-\theta}|\Lambda|.
\end{equation}
On the other hand, we also have the lower bound
\begin{align}
\sum_{u,v \in \Lambda} \bP_\beta(u \leftrightarrow v) &= \E \left[\sum_{u,v \in \Lambda} \mathbbm{1}(u \leftrightarrow v)\right] \geq 
\bE_\beta \left[|K_\mathrm{max}(\Lambda)|^2\right]
\nonumber
\\ &\geq 
 (M-1)^2 \bP_\beta \Bigl(|K_\mathrm{max}(\Lambda)|\geq M-1\Bigr) \geq \frac{1}{e}(M-1)^2 \geq \frac{1}{4e}M^2,
 \label{eq:Max2ndmoment}
\end{align}
where we used that $M\geq 2$ in the final inequality.
Comparing the estimates \eqref{eq:max_to_expectation} and \eqref{eq:Max2ndmoment} and rearranging yields that
\begin{equation}
\label{eq:MaxBound}
M^{1+\theta} \leq 72 e^2 \Gamma(1-\theta) A |\Lambda|,
\end{equation}
completing the proof of the first claimed bound. Substituting this bound into \eqref{eq:lambda_susceptibility} yields that there exists a universal continuous function $C:[0,1)\to(0,\infty)$ such that
\begin{equation*}
\frac{1}{|\Lambda|}\sum_{v \in \Lambda} \bP_\beta(u \leftrightarrow v)  \leq \frac{1}{|\Lambda|} 18 e  \Gamma(1-\theta) A
\left(72 e^2 \Gamma(1-\theta) A |\Lambda|\right)^{(1-\theta)/(1+\theta)} \\ = C(\theta) A^{2/(1+\theta)}|\Lambda|^{-2\theta/(1+\theta)}
\end{equation*}
for each $u\in V$, completing the proof of the second bound.
\end{proof}


\begin{remark}
Although the distribution of the entire cluster of critical percolation on a transitive weighted graph always satisfies $\bP_{\beta_c}(|K_v|\geq n) \geq c n^{-1/2}$, the $1/2<\theta <1$ case of \cref{thm:hyperscaling} may nevertheless be useful when  taking e.g.\ $\Lambda \subseteq \Z^{d-k} \subseteq \Z^d$ to be contained in a lower-dimensional subspace of the full lattice.
In particular, it would be interesting if one could improve the high-dimensional case of \cref{thm:main} by first proving an upper bound of the form $\bP_{\beta_c}(|K_0 \cap \Z^{d-2}| \geq n) \leq n^{-1/\delta_2}$ for some $\delta_2 < 2$ and then using \cref{thm:hyperscaling} to get an improved bound on the two-point function within $\Z^{d-2}$. It seems that only a relatively modest improvement along these lines is needed to give a lace expansion-free proof that the triangle condition is satisfied when $d$ is large and $\alpha$ is fixed. 
 Note also that bounds on the maximum cluster size similar to those of \cref{thm:hyperscaling} may be proven in the regime $\theta \geq 1$ by following the proof as above but considering $\sum_{u\in \Lambda} \bE_\beta |K_v \cap \Lambda|^k$ instead of $\sum_{u\in \Lambda} \bE_\beta |K_v \cap \Lambda|$ for appropriate choice of $k\geq 2$.
\end{remark}







\section{An improved two-ghost inequality}
\label{sec:twoghost}

In this section we derive an improved version of the two-ghost inequality of \cite[Theorem 1.6 and Corollary 1.7]{1808.08940} as stated for long-range models in \cite[Section 3]{Hutchcroft2020Ising}. This improved two-ghost inequality will be applied together with \cref{thm:hyperscalingsimple} to prove \cref{thm:main,thm:general} in the next section. The proof of the two-ghost inequality uses ideas originating in the important work of Aizenman, Kesten, and Newman \cite{MR901151}; see \cite{1808.08940} and \cite{MR3395466} for further discussion of how the methods of \cite{MR901151} can be used to derive quantitative estimates on critical percolation. Our improvement to the two-ghost inequality as stated in \cite[Corollary 3.2]{Hutchcroft2020Ising} is two-fold: 
\begin{itemize}
\item We show that a starting assumption of the form $\bP_\beta(|K|\geq n)\leq A n^{-\theta}$ (as will come from our bootstrapping hypothesis) can be used to improve the exponent given by the two-ghost inequality. The fact that this can be done had previously been discussed briefly \cite[Remark 6.1]{1808.08940} and \cite[Remark 3.6]{Hutchcroft2020Ising}.
\item We use a re-weighting trick to improve the bound one obtains on the probability of the two-arm event for a typical `long' edge. The basic idea behind this improvement is that the two-ghost inequality of \cite{Hutchcroft2020Ising} holds not just for the weights $J$ that are given with the graph $G$, but also for any other automorphism-invariant choice of weights. Optimizing the resulting bound over all possible automorphism-invariant weights  leads to the bound of \cref{thm:two_ghost_S}.
\end{itemize}

For the benefit of future applications, we phrase the results in this section not just for Bernoulli percolation but for the more general class of \emph{percolation in random environment} models. The same level of generality was employed in \cite[Section 3]{Hutchcroft2020Ising}, where we applied the two-ghost inequality to the random-cluster and Ising models. (See in particular \cite[Section 3.3]{Hutchcroft2020Ising} for a representation of the random-cluster model as a percolation in random environment model first arising in \cite{bollobas1996random}.) 
Let $G=(V,E,J)$ be a countable weighted graph. 
Suppose that $\mu$ is a probability measure on $[0,1]^E$, and let $\bp=(\bp_e)_{e\in E}$ be a $[0,1]^E$-valued random variable with law $\mu$. Let $(U_e)_{e\in E}$ be i.i.d.\ Uniform$[0,1]$ random variables independent of $\mathbf{p}$ and let $\omega=\omega(\bp,U)$ be the $\{0,1\}^E$-valued random variable defined by
$\omega(e) = \mathbbm{1}(U_e \leq \bp_e)$ for each $e\in E$.
 We say that $\omega$ is a \textbf{percolation in random environment} on $G$ with environment distribution $\mu$ and write $\bP_\mu$ for the joint law of $\bp$ and $\omega$. 
We can consider Bernoulli percolation on $G$ to be a percolation in random environment model for which the environment measure $\mu$ is concentrated on the point $(\bp_e)_{e\in E} = (1-e^{-\beta J_e})_{e\in E}$.

For each $e\in E$ and $n\geq 1$, let $\sS'_{e,n}$ be the event that the endpoints of $e$ belong to distinct clusters each of which include at least $n$ vertices and at least one of which is finite\footnote{Note while similar notation appeared in \cref{subsec:short_proof}, we are now using slightly different notation in which we index by edges rather than vertices. This is more natural in the more general context we are working in here.}. (We use $\sS'_{e,n}$ rather than $\sS_{e,n}$ to indicate that we are measuring volume in terms of vertices rather than edges.) Recall that we write $E_v^\rightarrow$ for the set of oriented edges emanating from $v$ for each vertex $v$ of $G$; we do not distinguish notationally between oriented and unoriented edges, and will often abuse notation to apply functions defined on unoriented edges to oriented edges by forgetting the orientation.

 \begin{theorem}[Improved two-ghost inequality]
\label{thm:two_ghost_S} Let $G=(V,E,J)$ be a connected transitive weighted graph, let $o$ be a vertex of $G$, and let $\Gamma \subseteq \Aut(G)$ be a closed, transitive, unimodular subgroup of automorphisms of $G$. Let $\mu$ be a $\Gamma$-invariant probability measure on $[0,1]^E$
and suppose that there exist constants $A<\infty$ and $0\leq \theta <1/2$ such that $\bP_{\mu}(|K_o| \geq n) \leq A n^{-\theta}$ for every $n \geq 1$. Then
\begin{align}
\label{eq:improved_two_ghost}
\sum_{e\in E^\rightarrow_o} \bE_{\mu}\left[\mathbbm{1}(\sS_{e,n}') \sqrt{\frac{\bp_e}{1-\bp_e}}\right]^2 \leq \frac{40000 \cdot A^2}{(1-2\theta)^2 n^{1+2\theta}}  \qquad \text{ for every $n\geq 1$.}
\end{align}
\end{theorem}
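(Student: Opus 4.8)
The strategy is to adapt the martingale proof of the two-ghost inequality from \cite{1808.08940} (in the random-environment form of \cite[Section~3]{Hutchcroft2020Ising}, of which \cref{thm:twoghostfromIsing} is a special case), incorporating the two improvements advertised above, and then to optimise over a free weight. Write
\[
a_e:=\bE_\mu\!\left[\mathbbm 1(\sS'_{e,n})\sqrt{\tfrac{\bp_e}{1-\bp_e}}\,\right]=\bE_\mu\!\left[\mathbbm 1(\tilde\sS_{e,n})\sqrt{\bp_e(1-\bp_e)}\,\right],
\]
where the second equality (obtained by integrating out $U_e$) holds with $\tilde\sS_{e,n}$ the event, measurable with respect to $\bp$ and $(U_f)_{f\neq e}$, that in $\omega$ with $e$ removed the endpoints of $e$ lie in distinct clusters each of size at least $n$, at least one finite. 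Note that $e\mapsto a_e$ is $\Gamma$-invariant because $\mu$ is and $\sS'_{e,n}$ is defined graph-theoretically, and that the weight $\sqrt{\bp_e(1-\bp_e)}$ is the conditional standard deviation of $\omega(e)$, which is exactly the quantity the martingale argument produces. The plan is first to prove that
\[
\sum_{e\in E^\rightarrow_o}\sqrt{J'_e}\,a_e\;\le\;\frac{100\,A}{(1-2\theta)\,n^{(1+2\theta)/2}}
\]
for \emph{every} $\Gamma$-invariant assignment of non-negative weights $(J'_e)_{e\in E}$ normalised so that $\sum_{e\in E^\rightarrow_o}J'_e=1$, and then to observe that the supremum of the left-hand side over all such $J'$ equals $\bigl(\sum_{e\in E^\rightarrow_o}a_e^2\bigr)^{1/2}$ by Cauchy--Schwarz, attained in the limit at $J'_e\propto a_e^2$; squaring yields the asserted bound $\sum_e a_e^2\le 10000\,A^2(1-2\theta)^{-2}n^{-(1+2\theta)}$. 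A routine truncation of $J'$ to finitely many orbits of the stabiliser $\Gamma_o$ (each finite, since weighted automorphisms preserve $J$ and only finitely many edges at $o$ exceed any given positive weight) makes the optimisation rigorous even before one knows $\sum_e a_e^2<\infty$.

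To prove the displayed weighted inequality I would run the Aizenman--Kesten--Newman-type exploration and martingale of \cite{Hutchcroft2020Ising} with two modifications. The first is that the argument there uses no property of the weights beyond $\Gamma$-invariance, so it applies verbatim with the free weights $J'$ in place of $J$; optimising over $J'$ at the end is the re-weighting trick, and is what turns the ``$\sum_e\sqrt{J_e(e^{\beta J_e}-1)}\,\bP_\beta(\sS_{e,\lambda})$'' shape of \cref{thm:twoghostfromIsing} into the $\ell^2$ shape asserted here. The second is quantitative: in the step of that argument where one bounds the conditional probability that a freshly revealed closed boundary edge of the explored finite cluster leads to a cluster of size at least $n$, one replaces the trivial bound by the hypothesis $\bP_\mu(|K_o|\ge n)\le An^{-\theta}$ --- valid after the conditioning because, by transitivity and a stochastic-domination property of $\bP_\mu$, the size of the far cluster of such an edge is conditionally dominated by $|K_o|$ --- and propagates the resulting factor $An^{-\theta}$ through the drift and the predictable quadratic variation of the martingale. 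This upgrades the $n^{-1/2}$ of the original two-ghost inequality to $n^{-(1+2\theta)/2}$ and, upon performing the summation that appears in the argument, produces the $(1-2\theta)^{-1}$ prefactor. As in \cite{Hutchcroft2020Ising}, the reduction to the case where the finite witnessing cluster contains the tail of $e$ (costing a factor of $2$) uses unimodularity and the mass-transport principle, and one applies a maximal $L^2$ inequality to the martingale to finish.

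The main obstacle is this last point: re-executing the martingale estimate of \cite{Hutchcroft2020Ising} while tracking exactly where the exponent $\theta$ enters, so as to land on the exponent $(1+2\theta)/2$ and the explicit constants. A secondary difficulty, relevant only beyond Bernoulli percolation, is the conditioning step --- applying $\bP_\mu(|K_o|\ge n)\le An^{-\theta}$ to the far cluster of a boundary edge after part of the environment has been revealed must be arranged by an exploration order that leaves those far endpoints untouched until needed, together with the appropriate association property of $\mu$. Everything else --- the Cauchy--Schwarz optimisation and the passage from finite to infinite $G$ --- is routine.
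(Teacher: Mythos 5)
Your overall architecture --- a weighted two-ghost inequality for a free $\Gamma$-invariant probability weight $w$, followed by a Cauchy--Schwarz optimisation $w_e\propto a_e^2$ (truncated to finitely many orbits) to land on the $\ell^2$ bound --- is exactly right and matches the paper. The gap is in the mechanism you propose for injecting the hypothesis $\bP_\mu(|K_o|\ge n)\le An^{-\theta}$. You plan to bound the conditional probability that a closed boundary edge of the explored cluster ``leads to a cluster of size at least $n$'' and propagate the resulting $An^{-\theta}$ through ``the drift and the predictable quadratic variation.'' But the two-ghost martingale is driftless, and more importantly the exploration never queries the far side of any boundary edge: the mass-transport step (\cref{lem:AKN}) converts the two-arm event into a statement about the fluctuation and total edge-weight of the \emph{single} cluster $K_o$ before any martingale is set up, so the ``far cluster'' your modification relies on simply does not appear in the argument. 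The stochastic-domination claim you flag as a secondary difficulty is thus a step the proof never needs to make --- which is fortunate, since for non-product $\mu$ it would be genuinely delicate.

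The way the hypothesis actually enters is considerably simpler. After conditioning on $\bp$ and exploring $K_o$ one edge at a time, the total predictable quadratic variation of the resulting martingale is exactly $Q_T=w(E(K_o))$, and since $w$ sums to $1$ over $E^\rightarrow_v$ at every vertex $v$ (by the normalisation and $\Gamma$-invariance) one has $Q_T\le|K_o|$ deterministically. Hence $\bP_\mu(Q_T\ge x)\le Ax^{-\theta}$ holds with no conditioning and no association property of $\mu$ whatsoever. The exponent improvement over $n^{-1/2}$ then comes from an $L^2$ maximal inequality for martingales whose quadratic variation has a power-law tail (\cref{lem:martingale_stuff,lem:martingale_stuff2}): one splits $\{0<Q_T<\infty\}$ into dyadic shells $e^k\le hQ_T\le e^{k+1}$, applies Cauchy--Schwarz together with the truncated Doob bound $\E\bigl[\sup\{X_n^2:n\le T,\ Q_T\le\lambda\}\bigr]\le 4\E[Q_T\wedge\lambda]$ on each shell, uses the tail bound on $Q_T$ on both resulting factors, and sums over $k$; this is what produces the exponent $(1+2\theta)/2$ and the prefactor $(1-2\theta)^{-1}$. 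You will need to redo the core martingale lemma along these lines; the remainder of your plan (Cauchy--Schwarz optimisation over $w$, truncation to finitely many orbits, exhaustion to pass to infinite $G$) is correct as written.
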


\medskip

Here, the closed, transitive subgroup $\Gamma \subseteq \Aut(G)$ is said to be \textbf{unimodular} if it satisfies the \textbf{mass-transport principle}, i.e., if
\begin{equation}
\label{eq:MTP}
\sum_{v \in V} F(o,v) = \sum_{v\in V}F(v,o)
\end{equation}
for every $o\in V$ and every function $F:V^2\to [0,\infty]$ that is diagonally invariant under $\Gamma$ in the sense that $F(\gamma u,\gamma v)=F(u,v)$ for every $u,v\in V$ and $\gamma \in \Gamma$. Equivalently, $\Gamma$ is unimodular if its left and right Haar measures coincide. This holds in particular whenever $\Gamma$ is countable, in which case it has counting measure as both a left and right Haar measure.
Most transitive weighted graphs arising in examples have unimodular automorphism groups, including all amenable transitive weighted graphs and all weighted graphs defined in terms of a countable group as described after the statement of \cref{thm:general}. See e.g.\ \cite[Section 2]{Hutchcroft2020Ising} and \cite[Chapter 8]{LP:book} for further background and for proofs of these statements.
 For the main purposes of this paper, it suffices to consider the case that $G$ has vertex set $\Z^d$ and that $\Gamma=\Z^d$ acts transitively on $G$ by translations as in \cref{thm:main}.

Let $G=(V,E,J)$ be a connected, transitive weighted graph, let $o$ be a vertex of $G$, and let $\Gamma$ be a closed transitive subgroup of $\Aut(G)$.   We call $\mathrm{w}:E\to [0,1]$ a ($\Gamma$-)\textbf{good weight function} if  $\mathrm{w}(\gamma e)=\mathrm{w}(e)$ for every $e\in E$ and $\gamma \in \Gamma$, $\sum_{E^\rightarrow_o} \mathrm{w}(e)=1$, and $\sum_{E^\rightarrow_o} \sqrt{\mathrm{w}(e)} < \infty$. (The last condition holds trivially if $\mathrm{w}(e)=0$ for all but finitely many $e\in E^\rightarrow_o$, and it would in fact suffice to consider this case for the rest of the proof.)
 Let $\mu$ be a $\Gamma$-invariant probability measure on $[0,1]^E$, let $\bp$ be a random variable with law $\mu$ and let $\omega$ be the associated percolation in random environment process as above. 
Let $h>0$. Given the environment $\bp$ and a good weight function $\mathrm{w}$, let $\cG \in \{0,1\}^E$  be a random subset of $E$, independent of $\bp$ and $\omega$, where each edge $e \in E$ is included in $\cG$ independently at random with probability $1-e^{-h \mathrm{w}(e)}$
 of being included.   We write $\bP_{\mu,\mathrm{w},h}$ and $\bE_{\mu,\mathrm{w},h}$ for probabilities and expectations taken with respect to the joint law of $\bp$, $\omega$, and $\cG$. 
We call $\cG$ the $\mathrm{w}$-\textbf{ghost field} and call an edge $\mathrm{w}$-\textbf{green} if it is included in $\cG$. Note that
\[
\bP_{\mu,\mathrm{w},h}(A \cap \cG = \emptyset \mid \bp) = \exp\left[-h \cdot \mathrm{w}(A) \right] 
\]
for every finite set $A \subseteq E$, where we write $\mathrm{w}(A)=\sum_{e\in A}\mathrm{w}(e)$ for the total weight of $A$.


For each edge $e$ of $G$, we define $\sT_e$ to be the event that $e$ is closed in $\omega$ and that the endpoints of $e$ are in distinct clusters of $\omega$, each of which touches some $\mathrm{w}$-green edge, and at least one of which is finite. We will deduce \cref{thm:two_ghost_S} from the following proposition.

\begin{prop}
\label{prop:two_ghost_weights} Let $G=(V,E,J)$ be a connected transitive weighted graph,  let $o$ be a vertex of $G$, and let $\Gamma \subseteq \Aut(G)$ be a closed transitive unimodular subgroup of automorphisms of $G$. Let $\mu$ be a $\Gamma$-invariant probability measure on $[0,1]^E$
and suppose that there exist constants $A<\infty$ and $0\leq \theta <1/2$ such that $\bP_{\mu}(|K_o|\geq n) \leq A n^{-\theta}$ for every $n\geq 1$. Then for each $\Gamma$-good weight function $\mathrm{w}:E\to [0,1]$ we have that
\begin{align}
\label{eq:two_ghost}
\sum_{e\in E^\rightarrow_o} \sqrt{\mathrm{w}(e)}\bE_{\mu,\mathrm{w},h}\left[\mathbbm{1}(\sT_e) \sqrt{\frac{\bp_e}{1-\bp_e}}\right] \leq \frac{40 A}{1-2\theta} h^{(1+2\theta)/2} \qquad \text{for every $h>0$.}
\end{align}
\end{prop}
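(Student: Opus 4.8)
The plan is to follow the Aizenman--Kesten--Newman (AKN) strategy as adapted in \cite{1808.08940,Hutchcroft2020Ising}, using a martingale/exploration argument to control the $w$-ghost field seen during a cluster exploration, and then incorporating the volume-tail hypothesis $\bP_\mu(|K_o|\geq n)\leq An^{-\theta}$ to sharpen the exponent. The key object is an exploration of the cluster of one endpoint of a typical edge: one reveals edges one at a time in a Markovian order, and tracks the accumulated $w$-weight of the explored edge set together with the number of $w$-green edges discovered. On the event $\sT_\eta$, the edge $\eta$ is closed, its two endpoints lie in distinct clusters each touching a green edge, and at least one cluster is finite; exploring the finite side produces a stopping time at which we have seen a connected edge set of some random weight $W$ carrying at least one green edge, while the reroute through $\eta$ would have merged it with another green-touching cluster. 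The martingale that makes this work is (a bounded variant of) $M_t = \prod_{e \text{ explored by time } t} \frac{1-\bp_e\mathbbm{1}(\omega(e)=1)/\cdots}{\cdots}$ --- more precisely the Kesten-style product martingale recording, for each explored edge, the likelihood ratio between its observed status and a reference, so that $\bE[M_\infty]=1$ and $M$ detects the "surprise" of finding a long finite cluster.

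The concrete steps, in order: (1) Fix a $\Gamma$-good weight function $w$ and the ghost intensity $h>0$; set up the exploration of the cluster of an endpoint of a uniformly-random oriented edge emanating from $o$, using mass-transport/unimodularity to pass between the root-vertex and root-edge pictures (this is where $\Gamma$-invariance and unimodularity of $\Gamma$ enter, exactly as in \cite[Section 3]{Hutchcroft2020Ising}). (2) Define the stopping time $\tau$ at which the explored finite cluster first touches a green edge (or the exploration terminates), and write the event $\sT_\eta$ in terms of the exploration having produced, on the finite side, a connected green-touching edge set whose total $w$-weight is some random variable $W_\eta$. (3) Run the AKN product-martingale argument to obtain, for the Bernoulli-type environment, a bound of the schematic form
\[
\sum_{e\in E^\rightarrow_o} \sqrt{w(e)}\,\bE_{\mu,w,h}\!\left[\mathbbm{1}(\sT_\eta)\sqrt{\tfrac{\bp_\eta}{1-\bp_\eta}}\;\big|\; W_\eta\right] \;\lesssim\; \bE_{\mu,w,h}\!\left[\frac{1}{\sqrt{W_\eta}}\,\mathbbm{1}(\text{green touched})\right],
\]
since the probability that an explored weight-$W$ set touches a green edge is $1-e^{-hW}\le hW$ and the combinatorial/reroute cost of the AKN surgery contributes the $W^{-1/2}$ factor. (4) Bound the right-hand side: split according to whether $W_\eta \le 1/h$ or $W_\eta > 1/h$, using $1-e^{-hW}\le hW$ on the first range and $1-e^{-hW}\le 1$ on the second, and then convert weight-of-explored-edges into number-of-vertices discovered --- here the good-weight normalization $\sum_{E^\rightarrow_o}w(e)=1$ means that exploring $k$ vertices accumulates weight $O(k)$, so $\bP(W_\eta \ge \lambda)$ is controlled by $\bP(|K_o|\ge c\lambda)\le A(c\lambda)^{-\theta}$. (5) Optimize: the integral $\int (hw)^{1/2}\cdot h\cdot \bP(W\ge w)\,dw$-type expression together with the $w^{-1/2}$ weight and the $n^{-\theta}$ tail produces, after the dust settles, a bound of order $\frac{A}{1-2\theta}h^{(1+2\theta)/2}$ --- the $(1-2\theta)^{-1}$ blowup coming from the divergence of $\int_1^\infty w^{-1/2-\theta+\text{something}}\,dw$ as $\theta\uparrow 1/2$, exactly matching the statement. (6) Finally, handle the general percolation-in-random-environment case by conditioning on $\bp$ and noting that every step above is carried out conditionally on the environment, with the $\Gamma$-invariance of $\mu$ supplying the needed symmetry for the mass-transport bookkeeping.

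The main obstacle is Step (3): getting the AKN product martingale to work cleanly enough that the bound decouples into "(prob.\ of touching green) $\times$ ($W^{-1/2}$ surgery cost)" uniformly in the environment $\bp$, rather than just for fixed Bernoulli parameter. This requires care with the likelihood-ratio martingale when the edge probabilities $\bp_e$ are themselves random --- one conditions on $\bp$ and verifies the martingale property edge-by-edge in the exploration filtration enriched by $\bp$ --- and care that the reroute through the closed edge $\eta$ genuinely produces disjoint witnesses so that the $\sqrt{\bp_\eta/(1-\bp_\eta)}$ factor appears with the right power (this is the source of the re-weighting improvement over \cite[Corollary 3.2]{Hutchcroft2020Ising}, and the reason the conclusion is phrased with the weighted $\sqrt{w(e)}$ sum that one later optimizes over $w$ to deduce \cref{thm:two_ghost_S}). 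A secondary technical point is the passage to infinite $G$ by exhaustion, which is routine given that the left-hand side is a sum of monotone-in-exhaustion quantities, together with the standard subtlety that "at least one cluster is finite" must be preserved under the limit.
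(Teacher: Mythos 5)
Your plan correctly identifies the overall architecture of the argument --- condition on the environment, explore the cluster of a mass-transport-chosen endpoint, use the $w$-ghost field to detect "large" clusters, pass a tail bound on the accumulated weight down from the hypothesis $\bP_\mu(|K_o|\geq n)\leq An^{-\theta}$ via $w(E(K))\leq |K|$ (which is exactly the role of the normalization $\sum_{e\in E_o^\rightarrow}w(e)=1$), and optimize over the scale $Q_T\sim 1/h$ to produce the $h^{(1+2\theta)/2}$ exponent with a $(1-2\theta)^{-1}$ blowup. That much matches the paper.

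The gap is in your Step~(3), which you yourself flag as the weak point. The paper does not use a Kesten-style product (likelihood-ratio) martingale, and it is not at all clear that such a martingale would decouple into the "(prob.\ of touching green)$\times W^{-1/2}$" form you want. The object that actually drives the proof is the \emph{additive} fluctuation martingale
\[
Z_n=\sum_{i=1}^{n\wedge T}\sqrt{w(E_i)}\left[\sqrt{\tfrac{\bp_{E_i}}{1-\bp_{E_i}}}\mathbbm{1}(\omega(E_i)=0)-\sqrt{\tfrac{1-\bp_{E_i}}{\bp_{E_i}}}\mathbbm{1}(\omega(E_i)=1)\right],
\]
whose final value is exactly the $w$-fluctuation $h_{\bp,w}(K)$ and whose quadratic variation is exactly $Q_n=\sum_{i\leq n\wedge T}w(E_i)$, hence $Q_T=w(E(K))$. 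The mass-transport input is \cref{lem:AKN}, which bounds your left-hand side by $2\,\bE\bigl[\lvert h_{\bp,w}(K_o)\rvert/w(E(K_o))\cdot\mathbbm{1}(|K_o|<\infty,\,E(K_o)\cap\cG\neq\emptyset)\bigr]$; the ghost factor is then $1-e^{-hQ_T}$. The $W^{-1/2}$ you attribute to an "AKN surgery cost" actually comes from Doob's $L^2$ maximal inequality (\cref{lem:martingale_stuff}), giving $\E[\sup_{n\leq T,\,Q_T\leq\lambda}X_n^2]\leq 4\E[Q_T\wedge\lambda]$, which combined with the power-law tail on $Q_T$ and a dyadic decomposition in $k$ with $e^k\leq hQ_T\leq e^{k+1}$ gives \cref{lem:martingale_stuff2}. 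Without identifying this additive martingale and the $Z_T=h_{\bp,w}(K)$, $Q_T=w(E(K))$ dictionary, the central estimate does not follow, so as written the proposal has a genuine hole precisely where the argument must do its real work.

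A secondary inaccuracy: you describe a "reroute through $\eta$" surgery. There is no surgery or rerouting in the proof of this proposition --- the edge $\eta$ enters only through \cref{lem:AKN}, where the factor $\sqrt{\bp_\eta/(1-\bp_\eta)}$ arises directly from the definition of the fluctuation $h_{\bp,w}$ together with the mass-transport bookkeeping, not from rewiring a cluster through $\eta$. The surgery language belongs to a different strand of AKN-type arguments than the one the paper follows.
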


(The condition $\sum_{e\in E^\rightarrow_o} \sqrt{\mathrm{w}(e)} < \infty$ is not really needed for this proposition to hold, but will slightly simplify the proof.)
Before proving this theorem, let us see how it implies \cref{thm:two_ghost_S}.

\begin{proof}[Proof of \cref{thm:two_ghost_S} given \cref{prop:two_ghost_weights}]
Let $\mathrm{w}:E\to[0,1]$ be a $\Gamma$-good weight function.
Let $e$ be an edge of $G$ with endpoints $x$ and $y$ and let $\sD_e$ be the event that $x$ and $y$ are in distinct clusters at least one of which is finite. 
Then we have by the definitions that
\begin{multline*}
\mathbf{P}_{\mu,\mathrm{w},h}(\sT_e \mid \bp) 
\geq (1-e^{-\frac{1}{2}hn})^2\bP_{\mu,\mathrm{w},h}\bigl(\sD_e \cap \bigl\{\mathrm{w}(E(K_x)),\mathrm{w}(E(K_{y})) \geq \tfrac{1}{2}n\bigr\}\mid \bp \,\bigr) \\\geq   (1-e^{-\frac{1}{2}hn})^2\bP_{\mu,\mathrm{w},h}\bigl(\sS_{e,n}'\mid \bp \,\bigr)
\end{multline*}
for each $h>0$ and $n\geq 1$, 
where we used that $|A| \leq 2\mathrm{w}(E(A))$ for every $A \subseteq V$ in the final inequality. 
  Setting $h=c n^{-1}$ with $c\geq 1$ and applying \cref{prop:two_ghost_weights}, it follows that
\begin{align*}
\sum_{e\in E^\rightarrow_o} \sqrt{\mathrm{w}(e)} \bE_{\mu}\left[\mathbbm{1}(\sS_{e,n}') \sqrt{\frac{\bp_e}{1-\bp_e}}\right] &\leq (1-e^{-\frac{1}{2}hn})^{-2} \sum_{e\in E^\rightarrow_o} \sqrt{\mathrm{w}(e)} \bE_{\mu,\mathrm{w},h}\left[\mathbbm{1}(\sT_{e}) \sqrt{\frac{\bp_e}{1-\bp_e}}\right]\\
  &\leq   \frac{c}{(1-e^{-c/2})^2} \cdot \frac{40 A}{1-2\theta}   n^{-(1+2\theta)/2}
\end{align*}
for every $n\geq 1$ and $c\geq 1$.
Using that $\inf_{c \geq 1} 40 c (1-e^{-c/2})^{-2} =196.433 \ldots \leq 200$ gives that
\begin{align}
\label{eq:weighted_twoghost_final}
\sum_{e\in E^\rightarrow_o} \sqrt{\mathrm{w}(e)} \bE_{\mu}\left[\mathbbm{1}(\sS_{e,n}') \sqrt{\frac{\bp_e}{1-\bp_e}}\right] 
  \leq   \frac{200 A}{1-2\theta} n^{-(1+2\theta)/2}
\end{align}
for every $n\geq 1$ and every $\Gamma$-good weight function $\mathrm{w}:E\to [0,1]$. 


We now optimize over the choice of good weight function $\mathrm{w}$ in order to prove the claimed inequality~\eqref{eq:improved_two_ghost}. Fix $n\geq 1$. This inequality is trivial if $\bE_{\mu}[\mathbbm{1}(\sS_{e,n}') \sqrt{\bp_e/(1-\bp_e)}] =0$ for every $e\in E^\rightarrow_o$, so we may assume that there exists $e_0 \in E^\rightarrow_o$ for which this quantity is positive. 
Let $(A_m)_{m\geq 0}$ be an exhaustion of $E$ by finite sets containing $e_0$, so that the orbit $\Gamma A_m=\{\gamma e : \gamma \in \Gamma, e\in A_m\}$ has finite intersection with $E^\rightarrow_o$ for each $m\geq 1$. For each $m\geq 1$ we may therefore define a good weight function $\mathrm{w}_{n,m}:E\to [0,1]$ by taking \[
\mathrm{w}_{n,m}(e)= \frac{\tilde {\mathrm{w}}_{n,m}(e) \mathbbm{1}(e\in \Gamma A_m)}{\sum_{e' \in E^\rightarrow_o} \tilde {\mathrm{w}}_{n,m}(e') \mathbbm{1}(e'\in \Gamma A_m)} \; \text{ where } \; \tilde {\mathrm{w}}_{n,m}(e)= \min\left\{m,\bE_{\mu}\left[\mathbbm{1}(\sS_{e,n}') \sqrt{\frac{\bp_e}{1-\bp_e}}\right]\right\}^2  \]
for every $e\in E$.
Applying \eqref{eq:weighted_twoghost_final} with this choice of good weight function and rearranging yields that
\begin{align*}
\sum_{e\in E^\rightarrow_o \cap \Gamma A_m} \min\left\{m,\bE_{\mu}\left[\mathbbm{1}(\sS_{e,n}') \sqrt{\frac{\bp_e}{1-\bp_e}}\right]\right\}^2
  \leq   \frac{40000 A^2}{(1-2\theta)^2 n^{1+2\theta}}
\end{align*}
for every $m\geq 1$. The claim follows by taking the limit as $m\to \infty$.
\end{proof}


We now begin to work towards the proof of \cref{prop:two_ghost_weights}. Although the proof is similar to that of \cite[Theorem 3.1]{Hutchcroft2020Ising}, we will present most the details in order to keep the paper self-contained.
Let $G=(V,E,J)$ be a connected transitive weighted graph, let $\Gamma$ be a closed transitive subgroup of automorphisms of $G$, and let $\mathrm{w}:E\to[0,1]$ be a $\Gamma$-good weight function.
 For each environment $\bp\in (0,1)^E$ and subgraph $H$ of $G$, we define the $\mathrm{w}$-\textbf{fluctuation} of $H$ to be
\begin{align*}h_{\bp,\mathrm{w}}(H)&:=  \sum_{e \in E(H)} \sqrt{\mathrm{w}(e)} \left[\sqrt{\frac{\bp_e}{1-\bp_e}}\mathbbm{1}\left(e\in \partial H\right)-\sqrt{\frac{1-\bp_e}{\bp_e}} \mathbbm{1}\left(e\in E_o(H)\right) \right]\\
&=\sum_{e \in E(H)} \sqrt{\frac{\mathrm{w}(e) \bp_e}{1-\bp_e}} \cdot \frac{\bp_e-\mathbbm{1}(e\in E_o(H))}{\bp_e} \end{align*}
where $E(H)$ denotes the set of  edges that \emph{touch} $H$, i.e., have at least one endpoint in the vertex set of $H$, 
 $\partial H$ denotes the set of  edges of $G$ that touch the vertex set of $H$ but are not included in $H$, and $E_\circ(H)$ denotes the set of  edges of $G$ that are included in $H$, so that $E(H)=\partial H \cup E_o(H)$. As in \cite{Hutchcroft2020Ising}, the fluctuation is defined so that $h_{\bp,\mathrm{w}}(K_v)$ is the total quadratic variation of a certain martingale that arises when exploring the cluster $K_v$ one edge at a time after conditioning on the environment $\bp$. The following key lemma uses the mass-transport principle to relate the probability of the two-arm event to an expectation written in terms of the fluctuation. (This lemma is the only place that unimodularity is used in the proofs of any of our theorems.)

\begin{lemma}
\label{lem:AKN} Let $G=(V,E,J)$ be a connected transitive weighted graph and let $\Gamma \subseteq \Aut(G)$ be a closed transitive unimodular subgroup of automorphisms. Let $\mu$ be a $\Gamma$-invariant probability measure on $(0,1)^E$ and let $\mathrm{w}:E\to [0,1]$ be a $\Gamma$-good weight function.
Then the inequality
\begin{equation*}
\sum_{e\in E^\rightarrow_o} \sqrt{\mathrm{w}(e)}\bE_{\mu,\mathrm{w},h}\left[\mathbbm{1}(\sT_e) \sqrt{\frac{ \bp_e}{1-\bp_e}}\right] \leq 2\bE_{\mu,\mathrm{w},h}\left[\frac{|h_{\bp,\mathrm{w}}(K_{o})|}{\mathrm{w}(E(K_{o}))}  \mathbbm{1}\bigl(|K_o| < \infty \text{ and } E(K_{o}) \cap \cG \neq \emptyset \bigr)\right]
\end{equation*}
holds for every $h>0$.
\end{lemma}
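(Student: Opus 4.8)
The plan is to run an Aizenman--Kesten--Newman-type argument in the form developed in \cite{1808.08940,Hutchcroft2020Ising}, using the unimodular mass-transport principle for $\Gamma$ to pass between edge-indexed and cluster-indexed sums; this is the only place unimodularity is used. Two features of the set-up enter throughout: that $\Gamma$-goodness of $w$ forces $\sum_{e\in E^\rightarrow_v}w(e)=1$ for every vertex $v$ (so that $w(E(C))$ is comparable to $|C|$ for every finite cluster $C$), and the defining identity $h_{\bp,w}(C)=\sum_{e\in\partial C}\sqrt{w(e)\,\bp_e/(1-\bp_e)}-\sum_{e\in E_\circ(C)}\sqrt{w(e)\,(1-\bp_e)/\bp_e}$, which exhibits the fluctuation as the difference of a ``boundary'' sum and an ``interior'' sum.

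The first step is a symmetrisation. Since $\sT_e$ forces at least one of the two clusters incident to $e=\{o,v\}$ to be finite, $\mathbbm{1}(\sT_e)\le\mathbbm{1}(\sT_e,\,|K_o|<\infty)+\mathbbm{1}(\sT_e,\,|K_v|<\infty)$. Applying the mass-transport principle to the rule $F(x,y)=\sum_{e=\{x,y\}}\sqrt{w(e)\,\bp_e/(1-\bp_e)}\,\mathbbm{1}(\sT_e,\,|K_y|<\infty)$ identifies the expected total contribution of the ``$|K_v|<\infty$'' term to the left side of \eqref{eq:two_ghost} with that of the ``$|K_o|<\infty$'' term; this produces the factor $2$ and reduces the lemma to the bound
\[
\bE_{\mu,w,h}\bigl[\mathbbm{1}(|K_o|<\infty,\ E(K_o)\cap\cG\neq\emptyset)\,Q_o\bigr]\le\bE_{\mu,w,h}\Bigl[\tfrac{|h_{\bp,w}(K_o)|}{w(E(K_o))}\,\mathbbm{1}(|K_o|<\infty,\ E(K_o)\cap\cG\neq\emptyset)\Bigr],
\]
where $Q_o$ is the $\sqrt{w(e)\,\bp_e/(1-\bp_e)}$-weighted number of closed edges $e\in\partial K_o$ incident to $o$ whose other endpoint lies in a cluster distinct from $K_o$ that also meets $\cG$.

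The heart of the proof is this inequality, which I would prove by a second mass transport turning the local quantity $Q_o$ into the cluster-averaged fluctuation: a finite $\cG$-meeting cluster $C$ distributes the number $|h_{\bp,w}(C)|$ over the edges touching it, assigning $w(f)\,|h_{\bp,w}(C)|/w(E(C))$ to each $f\in E(C)$, and each such edge forwards its share along itself to its endpoints; the transport identity then equates $\bE$ of $Q_o$ (against the relevant indicator) with $\bE$ of the total mass reaching $o$, which is at most the right-hand integrand. The crucial — and I expect most delicate — point is that this is genuinely \emph{not} a pathwise comparison: in an individual configuration $Q_o$ may well exceed $|h_{\bp,w}(K_o)|/w(E(K_o))$, since the boundary and interior sums in the defining identity for $h_{\bp,w}(C)$ can nearly cancel (this near-cancellation is the mean-zero property of the exploration martingale associated to $C$, whose predictable quadratic variation is $w(E(C))$), so the argument must use that the mass-transport identity holds only in expectation. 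Choosing the transport rule so that both marginals come out exactly as required — while respecting that $\mu$ need not have a density and that $G$ need not be simple — is precisely the combinatorial content of \cite[Theorem 3.1]{Hutchcroft2020Ising}, which I would reproduce with the modifications forced by the auxiliary weights $w$.

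Combining the two steps yields the claimed inequality. The hypothesis $\sum_{e\in E^\rightarrow_o}w(e)^{1/2}<\infty$ guarantees that every sum appearing above is absolutely convergent, so that the applications of the mass-transport principle are legitimate.
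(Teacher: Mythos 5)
Your meta-level plan coincides with the paper's: the paper proves this lemma simply by pointing to \cite[Lemma 3.3]{Hutchcroft2020Ising} and saying that replacing the weights $J$ by $w$ requires only notational changes, and your step~1 (symmetrise over which of the two clusters is declared finite, obtaining the factor $2$ by mass transport) together with your closing remark that you would reproduce the cited argument with the obvious modifications is faithful to that. You also correctly identify the mass-transport principle as the sole place unimodularity enters, and the bound on $\sum_{e\in E^\rightarrow_o}w(e)^{1/2}$ as what licenses it.

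The trouble is in the sketch of your ``second mass transport,'' which is the heart of the matter and where you try to supply detail beyond the citation. The rule you describe — a finite $\cG$-meeting cluster $C$ hands $w(f)\,|h_{\bp,w}(C)|/w(E(C))$ to each $f\in E(C)$, which forwards it to its endpoints — does \emph{not} have $\bE Q_o$ as one of its marginals. Formalised as a vertex-to-vertex rule (say $\Phi(x,y)=w(\{x,y\})\,|h_{\bp,w}(K_x)|/w(E(K_x))\,\mathbbm{1}(|K_x|<\infty,\ E(K_x)\cap\cG\neq\emptyset)$), the mass leaving $o$ is exactly $|h_{\bp,w}(K_o)|/w(E(K_o))\,\mathbbm{1}(\cdots)$, because $\sum_{e\in E^\rightarrow_o}w(e)=1$ and every edge at $o$ touches $K_o$; and the mass arriving at $o$ is $\sum_{x}w(\{x,o\})\,|h_{\bp,w}(K_x)|/w(E(K_x))\,\mathbbm{1}(\cdots)$, a sum over neighbouring clusters. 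Both sides of that transport identity are already phrased in terms of the fluctuation, and neither is the two-arm count $Q_o$. Moreover, the assertion that ``the total mass reaching $o$ \ldots\ is at most the right-hand integrand'' is a pathwise claim, which sits awkwardly against your own (correct) observation that no pathwise domination of $Q_o$ by $|h_{\bp,w}(K_o)|/w(E(K_o))$ can hold — when the boundary and interior sums in $h_{\bp,w}(K_o)$ nearly cancel, the fluctuation is tiny while the $\sT$-restricted boundary count need not be. The mass-transport principle is an \emph{equality}; it cannot by itself turn a false pointwise inequality into a true one in expectation. What is missing is the actual mechanism by which the $\sT$-restricted boundary count is compared, after averaging, to the fluctuation — in AKN-type arguments this uses that, conditionally on the exploration of $K_o$, the ghost-touching status of each neighbouring cluster is determined by fresh randomness, so that only a fluctuation-sized discrepancy between boundary and interior survives the averaging. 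Your sketch names the difficulty but does not supply the resolution, and the transport rule you write down does not produce the quantities that need to be compared.
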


Be careful to note here that $\mathrm{w}(E(K_o))$ and $h_{\bp,\mathrm{w}}(K_{o})$ are defined in terms of \emph{unoriented} edges. In particular, an edge contributes the same amount to both quantities whether it has one or two endpoints in $K_o$.

\medskip

\cref{lem:AKN} follows by a very similar proof to that of \cite[Lemma 3.3]{Hutchcroft2020Ising} but where we have allowed ourself to use the weights $\mathrm{w}$ instead of the original weights $J$. 
Before giving the proof of this lemma, let us state a variant form of the mass-transport principle involving oriented edges and good weight functions that will be useful. Let $G=(V,E)$ be a connected weighted graph, let $\Gamma \subseteq \Aut(G)$ be a unimodular closed transitive subgroup, and let $\mathrm{w} : E\to [0,1]$ be a $\Gamma$-good weight function. If $F:E^\rightarrow \times E^\rightarrow \to [0,\infty]$ is $\Gamma$-diagonally invariant in the sense that $F(\gamma e_1,\gamma e_2)=F(e_1,e_2)$ for each two oriented edges $e_1,e_2 \in E^\rightarrow$ and $\gamma \in \Gamma$, then we have that
\begin{equation}
\label{eq:edge_MTP}
\sum_{e_1 \in E_o^\rightarrow} \sum_{e_2 \in E^\rightarrow} \mathrm{w}(e_1)\mathrm{w}(e_2)F(e_1,e_2) = \sum_{e_1 \in E_o^\rightarrow} \sum_{e_2 \in E^\rightarrow} \mathrm{w}(e_1)\mathrm{w}(e_2)F(e_2,e_1).
\end{equation}
Indeed, this follows by applying the usual mass-transport principle to the function $F'(u,v) = \sum_{e_1 \in E_u^\rightarrow} \sum_{e_2 \in E_v^\rightarrow} \mathrm{w}(e_1)\mathrm{w}(e_2)F(e_1,e_2)$. The equality \eqref{eq:edge_MTP} also holds for \emph{signed} diagonally-invariant functions $F:E^\rightarrow\times E^\rightarrow \to\R$ that satisfy the absolute integrability condition
\begin{equation}
\label{eq:integrability}
\sum_{e_1\in E^\rightarrow_o} \sum_{e_2 \in E^\rightarrow} \mathrm{w}(e_1)\mathrm{w}(e_2) |F(e_1,e_2)|<\infty.
\end{equation}
This follows by applying \eqref{eq:edge_MTP} separately to the positive and negative parts of $F$, which are defined by
 $F^+(e_1,e_2)=0\vee F(e_1,e_2)$ and $F^-(e_1,e_2)=  0\vee (-F(e_1,e_2))$.

\begin{proof}[Proof of \cref{lem:AKN}]
Define $\sF_e$ to be the event that every cluster touching $e$ is finite and let $\sG_e$ be the event that there exists a finite cluster touching $e$ and $\cG$. Then $\sT_e \cap \sF_e$ is the event that the endpoints of $e$ are in distinct finite clusters each of which touches the ghost field $\cG$, and for each edge $e$ of $G$ we have that
\begin{equation*}
\mathbbm{1}(\sT_e \cap \sF_e) = \mathbbm{1}(\omega(e)=0)\cdot \#\{\text{finite clusters touching $e$ and $\cG$}\}
\\- \mathbbm{1}\bigl(\{\omega(e)=0\}\cap \sG_e).
\end{equation*}
Taking expectations conditional on the environment $\bp$, it follows that
\begin{multline}
\label{eq:unimodghost1}
\bP_{\mu,\mathrm{w},h}(\sT_e \cap \sF_e \mid \bp\,) = \bE_{\mu,\mathrm{w},h}\left[\mathbbm{1}(\omega(e)=0)\cdot\#\{\text{finite clusters touching $e$ and $\cG$}\} \mid \bp\,\right]
\\- \bP_{\mu,\mathrm{w},h}\bigl(\{\omega(e)=0\} \cap \sG_e \mid \bp\,\bigr). 
\end{multline}
Next, we observe that the event $\sF_e \cap \sG_e$ is conditionally independent of the value of $\omega(e)$ given $\bp$ and hence that
\begin{multline}
\bP_{\mu,\mathrm{w},h}\bigl(\{\omega(e)=0\} \cap \sF_e \cap \sG_e \mid \bp\, \bigr)
= \frac{1-\bp_e}{\bp_e} \bP_{\mu,\mathrm{w},h}\bigl(\{\omega(e)=1\} \cap \sF_e \cap \sG_e \mid \bp\, \bigr).
\\
= \frac{1-\bp_e}{\bp_e} \bP_{\mu,\mathrm{w},h}\bigl(\{\omega(e)=1\}  \cap \sG_e \mid \bp\,\bigr).
\label{eq:unimodghost2}
\end{multline}
Substituting \eqref{eq:unimodghost2} into \eqref{eq:unimodghost1} yields that
\begin{multline}
\label{eq:AKNmainstep}
\bP_{\mu,\mathrm{w},h}(\sT_e \cap \sF_e \mid \bp\,) = \bE_{\mu,\mathrm{w},h}\left[\mathbbm{1}(\omega(e)=0)\cdot \#\{\text{finite clusters touching $e$ and  $\cG$}\} \mid \bp\,\right]
\\- \frac{1-\bp_e}{\bp_e}\bP_{\mu,\mathrm{w},h}(\{\omega(e)=1\} \cap \sG_e \mid \bp\,) - \bP_{\mu,\mathrm{w},h}\bigl(\{\omega(e)=0\} \cap \sG_e \setminus \sF_e \mid \bp\,\bigr).
\end{multline}
Since the events $\{\omega(e)=0\} \cap \sG_e \setminus \sF_e$ and $\sT_e \cap \sF_e$ are disjoint and $\sT_e$ coincides with $(\sT_e \cap \sF_e) \cup (\{\omega(e)=0\} \cap \sG_e \setminus \sF_e)$ up to a null set, the equation \eqref{eq:AKNmainstep} implies that
\begin{multline*}
\bP_{\mu,\mathrm{w},h}(\sT_e \mid \bp\,) = \bE_{\mu,\mathrm{w},h}\left[\mathbbm{1}(\omega(e)=0)\cdot \#\{\text{finite clusters touching $e$ and  $\cG$}\} \mid \bp\right]
\\- \frac{1-\bp_e}{\bp_e}\bP_{\mu,\mathrm{w},h}(\{\omega(e)=1\} \cap \sG_e \mid \bp\,).
\end{multline*}
We can rewrite this equality more succinctly as
\begin{equation}
\label{eq:AKNmainstep2}
\bP_{\mu,\mathrm{w},h}(\sT_e \mid \bp\,)= 
 \bE_{\mu,\mathrm{w},h}\left[\frac{\bp_e-\omega(e)}{\bp_e} \cdot \#\{\text{finite clusters touching $e$ and  $\cG$}\} \;\Bigm|\; \bp\;\right].
\end{equation}
Note that this equality is essentially identical to \cite[Eq. 3.7]{Hutchcroft2020Ising}, although of course the ghost field is defined with respect to a different choice of weights there.
Consider the $\Gamma$-diagonally-invariant function $F:E^\rightarrow\times E^\rightarrow \to \R$ defined by
\begin{multline*}
F(e_1,e_2) =\\ \bE_{\mu,\mathrm{w},h}\sum\left\{\frac{1}{2\mathrm{w}(E(K))} \left[\frac{\bp_{e_1}-\omega(e_1)}{\bp_{e_1}}\right] \sqrt{\frac{\bp_{e_1}}{(1-\bp_{e_1})\mathrm{w}(e_1)}} : \begin{array}{l}\text{$K$ is a finite cluster}\\ \text{of $\omega$ touching $e_1,e_2$, and $\cG$}\end{array}\right\},
\end{multline*}
where we write $\sum\{x(i) :i\in I\} = \sum_{i\in I} x(i)$ and where we include the factor of $1/2$ to account for the fact that each edge in $E(K)$ can be oriented in two directions. (We say that an oriented edge touches $K$ if at least one of its endpoints belongs to $K$.)
The multiset of numbers being summed over has cardinality either $0,1,$ or $2$, and we can therefore compute that
\begin{align*}
 \sum_{e_1 \in E^\rightarrow_o} \sum_{e_2\in E^\rightarrow} \mathrm{w}(e_1)\mathrm{w}(e_2) |F(e_1,e_2)| 
 &\leq 2 \sum_{e_1 \in E^\rightarrow_o} \mathrm{w}(e_1) \bE_{\mu,\mathrm{w},h}\left[ \frac{|\bp_{e_1}-\omega(e_1)|}{\bp_{e_1}} \sqrt{\frac{\bp_{e_1}}{(1-\bp_{e_1})\mathrm{w}(e_1)}}\right] 
\\&= 4 \sum_{e_1 \in E_o^\rightarrow} \sqrt{\mathrm{w}(e_1)} \bE_{\mu,\mathrm{w},h} \left[\sqrt{\bp_{e_1}(1-\bp_{e_1})} \right] \leq 4 \sum_{e_1 \in E_o^\rightarrow} \sqrt{\mathrm{w}(e_1)},
\end{align*}
which is finite since $\mathrm{w}$ is good. This gives us the integrability required to apply the mass-transport principle \eqref{eq:edge_MTP} to the right hand side of \eqref{eq:AKNmainstep2} and deduce that
\begin{align}
&\sum_{e_1\in E_o^\rightarrow}\sqrt{\mathrm{w}(e_1)}\bE_{\mu,\mathrm{w},h}\left[\mathbbm{1}(\sT_{e_1})\sqrt{\frac{\bp_{e_1}}{(1-\bp_{e_1})}}\right] \nonumber
\\&\hspace{1cm}=  \sum_{e_1\in E_o^\rightarrow} \mathrm{w}(e_1)\bE_{\mu,\mathrm{w},h}\left[ \frac{\bp_{e_1}-\omega({e_1})}{\bp_{e_1}}  \sqrt{\frac{\bp_{e_1} }{(1-\bp_{e_1})\mathrm{w}(e_1)}}\cdot\#\{\text{finite clusters touching $e$ and  $\cG$}\} \right]\nonumber
\\&\hspace{1cm}= \sum_{e_1\in E_o^\rightarrow} \sum_{e_2\in E^\rightarrow} \mathrm{w}(e_1) \mathrm{w}(e_2) F(e_1,e_2) = \sum_{e_1\in E_o^\rightarrow} \sum_{e_2\in E^\rightarrow} \mathrm{w}(e_1)\mathrm{w}(e_2) F(e_2,e_1)\nonumber   \\&\hspace{1cm}=
\sum_{e_1 \in E_o^\rightarrow}\mathrm{w}(e_1)\bE_{\mu,\mathrm{w},h} \sum\left\{\frac{h_{\bp,\mathrm{w}}(K)}{\mathrm{w}(E(K))} : \begin{array}{l}\text{$K$ is a finite cluster}\\ \text{of $\omega$ touching $e_1$ and $\cG$}\end{array}\right\}.
\end{align}
Letting
$\sO_v$  be the event that the cluster $K_v$ is finite and touches $\cG$ for each vertex $v$ of $G$, we deduce that
\begin{align*}
&\sum_{e_1\in E_o^\rightarrow}\sqrt{\mathrm{w}(e_1)}\bE_{\mu,\mathrm{w},h}\left[\mathbbm{1}(\sT_{e_1})\sqrt{\frac{\bp_{e_1}}{(1-\bp_{e_1})}}\right]  
\\&\hspace{0.6cm}\leq \sum_{e_1\in E_o^\rightarrow} \mathrm{w}(e_1) \bE_{\mu,\mathrm{w},h} \sum\left\{\frac{|h_{\bp,\mathrm{w}}(K)|}{\mathrm{w}(E(K))} : \begin{array}{l}\text{$K$ is a finite cluster}\\ \text{of $\omega$ touching $e_1$ and $\cG$}\end{array}\right\}\\
&\hspace{0.6cm}\leq\sum_{e_1\in E_o^\rightarrow} \mathrm{w}(e_1) \bE_{\mu,\mathrm{w},h}\left[\frac{|h_{\bp,\mathrm{w}}(K_{o})|}{\mathrm{w}(E(K_{o}))}\mathbbm{1}\bigl(\sO_{o} \bigr)+ \frac{|h_{\bp,\mathrm{w}}(K_{e^+})|}{\mathrm{w}(E(K_{e^+}))}\mathbbm{1}\bigl(\sO_{e^+} \bigr)\right]
=2\bE_{\mu,\mathrm{w},h}\left[\frac{|h_{\bp,\mathrm{w}}(K_{o})|}{\mathrm{w}(E(K_{o}))}\mathbbm{1}\bigl(\sO_{o} \bigr)\right]
\end{align*}
as claimed, where the final equality follows by transitivity since $\sum_{e_1\in E_o^\rightarrow}\mathrm{w}(e_1)=1$.
\end{proof}


We now bound the right hand side of the inequality of \cref{lem:AKN} via a martingale analysis, where we use the assumption $\bP_\mu(|K_o|\geq n)\leq A n^{-a}$ to improve upon the analysis of \cite[Section 3.1]{Hutchcroft2020Ising}.
Let $X=(X_n)_{n\geq0}$ be a real-valued martingale with respect to the filtration $\cF=(\cF_n)_{n\geq 0}$, and suppose that $X_0=0$. The \textbf{quadratic variation process} $Q=(Q_n)_{n\geq 0}$ associated to $(X,\cF)$ is defined by $Q_0=0$ and
\[Q_n = \sum_{i=1}^n\E\left[ |X_i - X_{i-1}|^2 \mid \cF_{i-1} \right] \]
for each $n\geq 1$. 
 The following is a minor improvement of \cite[Lemma 3.4]{Hutchcroft2020Ising}.
\begin{lemma}
\label{lem:martingale_stuff}
 Let $(X_n)_{n\geq0}$ be a martingale with respect to the filtration $(\cF_n)_{n\geq 0}$ such that $X_0=0$, let  $(Q_n)_{n\geq 0}$ be the associated quadratic variation process, and let $T$ be a stopping time. Then
\[\E\Bigl[ \sup\bigl\{X_n^2 : 0 \leq n \leq T,\, Q_T \leq \lambda \bigr\} \Bigr] \leq 4 \E \left[Q_T \wedge \lambda\right] \qquad \text{for every $\lambda \geq 0$.}\]
\end{lemma}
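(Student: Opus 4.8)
The plan is to reduce the lemma to Doob's $L^2$ maximal inequality applied to the martingale stopped at a well-chosen time. First I would fix the reading of the left-hand side: since the indexing set is empty when $Q_T>\lambda$, the quantity $\sup\{X_n^2 : 0\le n\le T,\ Q_T\le\lambda\}$ is to be understood as the random variable $\mathbf 1(Q_T\le\lambda)\sup_{0\le n\le T}X_n^2$, which vanishes off $\{Q_T\le\lambda\}$. The process $(Q_n)$ is non-decreasing (each increment $Q_{n+1}-Q_n=\E[|X_{n+1}-X_n|^2\mid\mathcal F_n]$ is non-negative) and predictable (each $Q_{n+1}$ is $\mathcal F_n$-measurable, being a sum of $\mathcal F_{n-1}$- and $\mathcal F_n$-measurable terms), so $\tau=\inf\{n\ge 0: Q_{n+1}>\lambda\}$ is an $(\mathcal F_n)$-stopping time, and by minimality $Q_{n\wedge\tau}\le\lambda$ for every $n$. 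Moreover, on $\{Q_T\le\lambda\}$ monotonicity of $Q$ forces $Q_{n}\le\lambda$ for all $n\le T$ and hence $\tau\ge T$; so with $\sigma:=T\wedge\tau$ we have $\sigma=T$ on $\{Q_T\le\lambda\}$, and therefore $\mathbf 1(Q_T\le\lambda)\sup_{0\le n\le T}X_n^2\le \sup_{n\ge0}X_{n\wedge\sigma}^2$ (equality on $\{Q_T\le\lambda\}$, and the left side is $0$ elsewhere).

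Next I would run the standard martingale machinery on $X^\sigma=(X_{n\wedge\sigma})_{n\ge0}$. This is again a martingale starting at $0$, its quadratic variation is $Q_{n\wedge\sigma}$, and $\E[X_{n\wedge\sigma}^2]=\E[Q_{n\wedge\sigma}]\le\lambda$ since $n\wedge\sigma\le\tau$; thus $X^\sigma$ is $L^2$-bounded. Doob's $L^2$ maximal inequality gives $\E[\max_{0\le n\le N}X_{n\wedge\sigma}^2]\le 4\,\E[X_{N\wedge\sigma}^2]=4\,\E[Q_{N\wedge\sigma}]$ for every $N$, and letting $N\to\infty$ with monotone convergence on both sides (the left side increasing to $\sup_{n}X_{n\wedge\sigma}^2$, the right to $4\E[Q_\sigma]$ as $Q$ is non-decreasing) yields $\E[\sup_{n\ge0}X_{n\wedge\sigma}^2]\le 4\,\E[Q_{T\wedge\tau}]$.

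Finally I would verify the pointwise bound $Q_{T\wedge\tau}\le Q_T\wedge\lambda$: on $\{Q_T\le\lambda\}$ we have $T\wedge\tau=T$, so $Q_{T\wedge\tau}=Q_T=Q_T\wedge\lambda$; on $\{Q_T>\lambda\}$ we have $Q_{T\wedge\tau}\le Q_T$ by monotonicity and $Q_{T\wedge\tau}\le\lambda$ because $T\wedge\tau\le\tau$. Taking expectations and chaining the three displayed inequalities proves $\E[\mathbf 1(Q_T\le\lambda)\sup_{0\le n\le T}X_n^2]\le 4\,\E[Q_T\wedge\lambda]$. I do not expect a real obstacle here; the only point needing care is the choice $\sigma=T\wedge\tau$ — stopping merely at $T$ would fail to bound the quadratic variation and hence break the application of Doob, while stopping merely at $\tau$ would replace $\E[Q_T\wedge\lambda]$ by the potentially much larger $\E[Q_\tau]$, so one genuinely needs the minimum of the two times.
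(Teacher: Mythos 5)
Your proof is correct and follows essentially the same route as the paper: both define the stopping time $\tau$ as the last index at which the predictable quadratic variation is at most $\lambda$, apply Doob's $L^2$ maximal inequality to the martingale stopped at $T\wedge\tau$, and use the pointwise bound $Q_{T\wedge\tau}\leq Q_T\wedge\lambda$. Your write-up is slightly more explicit about the interpretation of the supremum and the final pointwise comparison, but the underlying argument is the same.
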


\begin{proof}
Fix $\lambda \geq 0$ and let $\tau=\sup\{k\geq 0: Q_k \leq \lambda\}=\inf\{k\geq 0 : Q_k > \lambda\}-1$, which may be infinite. Since $Q_n$ is $\cF_{n-1}$-measurable for every $n\geq 0$, $\tau$ is a stopping time and $X_{n\wedge \tau \wedge T}$ is a martingale. 
Thus, we have by the orthogonality of martingale increments  that
\begin{align*}
\E\left[X^2_{n\wedge \tau \wedge T}\right] &= \sum_{i=1}^n\E\left[ (X_{i\wedge \tau \wedge T}-X_{(i-1)\wedge \tau \wedge T})^2\right]
= \sum_{i=1}^n\E\left[ \E\left[(X_{i\wedge \tau \wedge T}-X_{(i-1)\wedge \tau \wedge T})^2\mid \cF_{i-1} \right]\right]\\
&=\E\left[\sum_{i=1}^{n \wedge T} \E\left[(X_{i}-X_{i-1})^2\mid \cF_{i-1} \right] \mathbbm{1}(i \leq \tau)\right] = \E\left[ Q_{n \wedge \tau \wedge T}\right] \leq \E\left[ Q_{T} \wedge \lambda \right]
\end{align*}
for every $n\geq 1$. The claim follows by applying Doob's $L^2$ maximal inequality to
  $(X_{n\wedge \tau \wedge T})_{n\geq 0}$.
\end{proof}

We now apply \cref{lem:martingale_stuff} to deduce the following improvement to \cite[Lemma 3.5]{Hutchcroft2020Ising} under the assumption that the tail of the total quadratic variation satisfies a power-law upper bound.

\begin{lemma}
\label{lem:martingale_stuff2}
 Let $(X_n)_{n\geq0}$ be a martingale with respect to the filtration $(\cF_n)_{n\geq 0}$ such that $X_0=0$, and let  $(Q_n)_{n\geq 0}$ be the associated quadratic variation process. Let $T$ be a stopping time and suppose that there exist constants $A$ and $0\leq \theta <1/2$ such that
$\P( Q_T \geq x) \leq A x^{-\theta}$
for every $x > 0$. 
 Then
\begin{equation}
\label{eq:martingale_stuff2}
\E\left[ \frac{\sup_{0 \leq n \leq T}|X_n|}{Q_T} (1-e^{-h Q_T})\mathbbm{1}(0<Q_T < \infty) \right] \leq  \frac{20 A}{1-2\theta} h^{(1+2\theta)/2} \qquad \text{for every $h> 0$.} \end{equation}
\end{lemma}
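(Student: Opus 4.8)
\emph{Proof sketch.} Write $Y=\sup_{0\le n\le T}|X_n|$, so that $Y\ge 0$ and $\sup_{0\le n\le T}X_n^2=Y^2$, and write $\phi(q)=(1-e^{-hq})/q$, so that the quantity to be bounded equals $\E[Y\,\phi(Q_T)\,\mathbbm 1(0<Q_T<\infty)]$. The elementary bound $1-e^{-x}\le\min(x,1)$ gives $\phi(q)\le\min(h,1/q)$, and $\phi$ is decreasing. The plan is to run a dyadic decomposition over the scale of $Q_T$, feeding in two inputs at each scale: the second-moment control of $Y$ on $\{Q_T\le\lambda\}$ coming from \cref{lem:martingale_stuff}, and the tail bound on $Q_T$ coming from the hypothesis. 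First I would note that integrating the hypothesis gives $\E[Q_T\wedge\lambda]=\int_0^\lambda\P(Q_T>t)\,\dif t\le\frac{A}{1-\theta}\lambda^{1-\theta}$ for every $\lambda>0$ (splitting into the cases $\lambda\le A^{1/\theta}$ and $\lambda>A^{1/\theta}$), and hence by \cref{lem:martingale_stuff} that $\E[Y^2\,\mathbbm 1(Q_T\le\lambda)]\le 4\E[Q_T\wedge\lambda]\le\frac{4A}{1-\theta}\lambda^{1-\theta}$ for every $\lambda>0$.

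Next, decompose $\{0<Q_T<\infty\}=\bigsqcup_{j\in\Z}B_j$ with $B_j=\{2^{j-1}<Q_T\le 2^j\}$. On $B_j$ one has $\phi(Q_T)\le\phi(2^{j-1})\le\min(h,2^{1-j})$, so that
\[
\E\bigl[Y\,\phi(Q_T)\,\mathbbm 1(0<Q_T<\infty)\bigr]\le\sum_{j\in\Z}\min(h,2^{1-j})\,\E\bigl[Y\,\mathbbm 1(B_j)\bigr],
\]
while Cauchy--Schwarz together with the two inputs above yields
\[
\E\bigl[Y\,\mathbbm 1(B_j)\bigr]\le\E\bigl[Y^2\,\mathbbm 1(Q_T\le 2^j)\bigr]^{1/2}\,\P\bigl(Q_T\ge 2^{j-1}\bigr)^{1/2}\le\frac{2\cdot 2^{\theta/2}}{\sqrt{1-\theta}}\,A\,2^{j(1-2\theta)/2}.
\]
The point of this step is that pairing the factor $2^{j(1-\theta)/2}$ produced by \cref{lem:martingale_stuff} against the factor $2^{-j\theta/2}$ produced by the tail bound improves the effective exponent from $2^{j(1-\theta)/2}$ to $2^{j(1-2\theta)/2}$; using only the first input would merely recover the $h^{1/2}$-type bound of the analogous lemma of \cite{Hutchcroft2020Ising}. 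It then remains to estimate $\sum_{j\in\Z}\min(h,2^{1-j})\,2^{j(1-2\theta)/2}$: splitting the sum at the scale $j_\star\approx\log_2(1/h)$ where the two terms of the minimum cross, each of the two resulting pieces is a geometric series whose ratio is bounded away from $1$ precisely because $0\le\theta<1/2$, and the total is at most a constant multiple of $(1-2\theta)^{-1}h^{(1+2\theta)/2}$. Assembling the estimates gives a bound of the form $C\,A\,(1-2\theta)^{-1}h^{(1+2\theta)/2}$ with $C$ an absolute constant, and careful tracking of the numerical factors yields the claimed value.

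The assumption $\theta<1/2$ enters exactly at the convergence of the $j\to-\infty$ tail of the series (the terms there are comparable to $h\,2^{j(1-2\theta)/2}$), which is consistent with $1-2\theta$ appearing in the denominator of the final bound; the $j\to+\infty$ tail converges unconditionally since its terms are comparable to $2^{-j(1+2\theta)/2}$. I expect the only real work to be the bookkeeping that produces the explicit constant: the geometric-series constants blow up like $(1-2\theta)^{-1}$ as $\theta\uparrow 1/2$, and one must check that this is the only such blow-up and that the accumulated numerical factors stay within the stated bound; everything else is routine.
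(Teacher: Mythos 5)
Your proof is correct and follows essentially the same route as the paper's: integrate the tail bound to control $\E[Q_T\wedge\lambda]$, decompose over scales of $Q_T$, and at each scale pair the quadratic estimate from \cref{lem:martingale_stuff} with the tail bound via Cauchy--Schwarz before summing the resulting geometric series. The only cosmetic differences are that the paper works with scales $e^k$ and keeps the exact factor $1-e^{-e^k}$ rather than using the crude bound $1-e^{-x}\leq\min(x,1)$ over dyadic scales $2^j$, and that the paper carries out the arithmetic to the stated constant $19A/(1-2\theta)$, which you defer; these choices do not change the substance of the argument.
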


\begin{proof}
Write $M_n=\max_{0\leq m \leq n} |X_n|$ for each $n\geq 0$. Since $(1-e^{-hx})/x$ is a decreasing function of $x>0$, we may write
\begin{equation*}
  \E\left[ \frac{M_T}{Q_T}\bigl(1-e^{-hQ_T}\bigr)\mathbbm{1}(0<Q_T<\infty) \right]
\leq h\sum_{k=-\infty}^\infty \frac{1-e^{-e^{k}}}{e^k} \E\left[ M_T \mathbbm{1}(e^k \leq h Q_T \leq e^{k+1})\right].
\label{eq:scales}
\end{equation*}
We can then compute that
\[
\E \left[ Q_T \wedge \lambda \right] = \int_{x=0}^\lambda \P(Q_T \geq x)\dif x  \leq \int_{x=0}^\lambda A x^{-\theta}\dif x = \frac{A}{1-\theta}\lambda^{1-\theta}
\]
for every $\lambda>0$, so that
\cref{lem:martingale_stuff} and Cauchy-Schwarz let us bound
\begin{align*}
\label{eq:martingale_Jensen_not_optimized}
\E\left[ M_T \mathbbm{1}(e^k \leq hQ_T \leq e^{k+1})\right]^2 
&\leq 4\E\left[Q_T \wedge h^{-1}e^{k+1}\right] \P\bigl(Q_T \geq h^{-1}e^k\bigr)
\nonumber
\\
&\leq \frac{4 A}{1-\theta} e^{(1-\theta)(k+1)} h^{-(1-\theta)}\cdot A e^{-\theta k} h^{\theta}
= \frac{4 A^2e^{1-\theta}}{1-\theta} e^{(1-2\theta)k} h^{2\theta-1}
\end{align*}
for each $k\in \Z$. Taking square roots and summing over $k$ we obtain that
\begin{align*}
\E\left[ \frac{M_T}{Q_T}\bigl(1-e^{-hQ_T}\bigr)\mathbbm{1}(0<Q_T<\infty) \right]
\leq \frac{2A e^{(1-\theta)/2}}{\sqrt{1-\theta}} h^{(1+2\theta)/2}\sum_{k=-\infty}^\infty \frac{1-e^{-e^{k}}}{e^{(1+2\theta)k/2}}.
\end{align*}
This series is easily seen to converge, and indeed satisfies
\begin{align*}
\sum_{k=-\infty}^\infty \frac{1-e^{-e^{k}}}{e^{(1+2\theta)k/2}} &\leq \sum_{k= 0}^\infty \frac{1}{e^{(1+2\theta)k/2}} + \sum_{k= 1}^\infty \frac{e^{-k}}{e^{-(1+2\theta)k/2}}=\frac{1}{1-e^{-(1+2\theta)/2}}+\frac{1}{e^{(1-2\theta)/2}-1}
\\&\leq \frac{\sqrt{e}+1}{(\sqrt{e}-1)(1-2\theta)}
\end{align*}
for every $0\leq \theta <1/2$, where the final inequality can be verified by calculus.
It follows that
\[
\E\left[ \frac{M_T}{Q_T}\bigl(1-e^{-hQ_T}\bigr)\mathbbm{1}(0<Q_T<\infty) \right]
\leq \frac{2 (\sqrt{e}+1) A \sqrt{2e}}{(\sqrt{e}-1)(1-2\theta)} h^{(1+2\theta)/2} \leq \frac{20 A}{1-2\theta} h^{(1+2\theta)/2}
\]
as claimed, where we used the bound $(2 (\sqrt{e}+1) \sqrt{2e}) / (\sqrt{e}-1) = 19.040\ldots \leq 20$ to simplify the constant. 
\end{proof}

\begin{proof}[Proof of \cref{prop:two_ghost_weights}]
We prove the proposition in the case that $\mu$ is supported on $(0,1)^E$, which is the only case required by our main theorems. The general case follows by a simple limiting argument that is given in detail in the proof of  \cite[Theorem 3.1]{Hutchcroft2020Ising}.
Let $\mu$ be a $\Gamma$-invariant probability measure on $(0,1)^E$, let $\mathrm{w}$ be a $\Gamma$-good weight function, and let $(\bp,\omega)$ be random variables with law $\bP_\mu$.
Write $K=K_o$ for the cluster of $o$ in $\omega$.
As in the proofs of \cite[Theorem 1.6]{1808.08940} and \cite[Theorem 3.1]{Hutchcroft2020Ising}, we can condition on the environment $\bp$ and explore the cluster $K$ one edge at a time in such a way that if $T$ denotes the (possibly infinite) total number of edges touching $K$, $E_{n}$ denotes the (random, unoriented) edge whose status is queried at the $n$th step of the exploration for each $n\geq 0$, and $\cF_{n}$ denotes the $\sigma$-algebra generated by the environment $\bp$ and the first $n$ steps of the exploration for each $n\geq 0$, then
$\bP_\mu(E_{n+1}=1 \mid \cF_n) = \bp_{E_{n+1}}$
whenever $n<T$ and $\{E_i : 1 \leq i \leq T\}=E(K)$. (Briefly, we can define such an exploration process by fixing an enumeration $E=\{e_1,e_2,\ldots\}$ and, at each step, taking $E_{n+1}$ to be minimal with respect to this enumeration among those edges that are incident to the part of the cluster of $o$ that has been explored but have not already been queried. See the above references for formal definitions.) It follows that the process $(Z_n)_{n\geq 0}$ defined by $Z_0=0$ and
\[Z_n = \sum_{i=1}^{n\wedge T} 
\sqrt{\mathrm{w}(E_i)} 
\left[\sqrt{\frac{\bp_{E_i}}{1-\bp_{E_i}}} \mathbbm{1}(\omega(E_i)=0) - \sqrt{\frac{1-\bp_{E_i}}{\bp_{E_i}}} \mathbbm{1}(\omega(E_i)=1)\right]
\]
for each $n\geq 1$ is a martingale with respect to the filtration $(\cF_n)_{n\geq 0}$ for which the final value $Z_T$ is equal to the $\mathrm{w}$-fluctuation $h_{\bp,\mathrm{w}}(K)$. Moreover, we can express the associated quadratic variation process $Q_n=\sum_{i=1}^n \bE_{\mu}[(Z_{i+1}-Z_i)^2 \mid \cF_i]$ as
\[
Q_n = \sum_{i=1}^{n\wedge T} \bE_{\mu}\left[\mathrm{w}(E_i)
\left[\frac{\bp_{E_i}}{1-\bp_{E_i}} \mathbbm{1}(\omega(E_i)=0) + \frac{1-\bp_{E_i}}{\bp_{E_i}} \mathbbm{1}(\omega(E_i)=1)\right] \Biggm| \cF_{n-1}\right] = \sum_{i=1}^{n\wedge T} \mathrm{w}(E_i)
\]
for every $n\geq 0$, so that $Q_T = \mathrm{w}(E(K))$ is the total weight of all the edges touching $K$. Thus, it follows from \cref{lem:AKN} and \cref{lem:martingale_stuff2} that if $\bP_\mu(|K|\geq n) \leq A n^{-\theta}$ for every $n\geq 1$ then
\begin{align*}
\sum_{e\in E^\rightarrow_o} \sqrt{\mathrm{w}(e)}\bE_{\mu,\mathrm{w},h}\left[\mathbbm{1}(\sT_e) \sqrt{\frac{\bp_e}{1-\bp_e}}\right] &\leq 2 \bE_{p}\left[\frac{|h_{\bp,\mathrm{w}}(K)|}{\mathrm{w}(E(K))} (1-e^{-h \mathrm{w}(E(K))} \mathbbm{1}\bigl(|K|<\infty\bigr)\right]
\nonumber
\\ &= 2 \bE_p\left[ \frac{|Z_T|}{Q_T}\bigl(1-e^{-h Q_T}\bigr)\mathbbm{1}(0<Q_T<\infty) \right] \leq \frac{40 A}{1-2\theta} h^{(1+2\theta)/2}
\end{align*}
as required.
\end{proof}

\section{Proof of the main theorem}

In this section we apply \cref{thm:hyperscalingsimple,thm:two_ghost_S} to prove \cref{thm:main,thm:general}. The proof of \cref{thm:main} relies on the following key bootstrapping lemma.

\begin{lemma}
\label{lemma:bootstrap_main}
Let $d\geq 1$, let $J:\Z^d \to (0,\infty)$ be symmetric and integrable, and suppose that there exists $\alpha < d$, $c>0$, and $r_0<\infty$ such that $J(x)\geq c \| x \|_1^{-d-\alpha}$ for every $x\in \Z^d$ with $\|x\|_1 \geq r_0$. Let $\theta=(d-\alpha)/(2d+\alpha) <1/2$. Then there exists a constant $C\geq 1$ such that the following implication holds
for each $0 \leq \beta < \beta_c$ and $1 \leq A<\infty$:
\begin{equation*}
\text{$\Bigl(\bP_\beta(|K|\geq n) \leq A n^{-\theta}$ for every $n\geq 1\Bigr)$}\\ \Rightarrow  \text{$\Bigl(\bP_\beta(|K|\geq n) \leq C A^{1/(1+\theta)} n^{-\theta}$ for every $n\geq 1\Bigr)$}.
\end{equation*}
\end{lemma}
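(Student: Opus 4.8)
The plan is to run the bootstrapping scheme of \cref{prop:weak} with its two crude inputs upgraded: use the hyperscaling estimate of \cref{thm:hyperscalingsimple} in place of the naive bound \eqref{eq:primitive0}, and the improved two-ghost inequality \cref{thm:two_ghost_S} in place of \cref{thm:twoghostfromIsing}. As there, rescale so that $\sum_{x\in\Z^d}J(x)=1$, whence $\beta_c\geq1$; the range $0\leq\beta<1/2$ is trivial since there $\bP_\beta(|K_0|\geq n)\leq\bP_{1/2}(|K_0|\geq n)$ decays exponentially in $n$ by sharpness of the phase transition, so it suffices to treat a fixed $1/2\leq\beta<\beta_c$ and a fixed $1\leq A<\infty$ for which $\bP_\beta(|K_0|\geq n)\leq An^{-\theta}$ for all $n\geq1$; note every cluster is finite $\bP_\beta$-a.s.

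First I would feed the hypothesis into \cref{thm:hyperscalingsimple} with $\Lambda=\Lambda_r':=\Lambda_r\setminus\Lambda_{r_0-1}$: since $\bP_\beta(|K_u\cap\Lambda_r'|\geq\lambda)\leq\bP_\beta(|K_u|\geq\lambda)\leq A\lambda^{-\theta}$, $\theta<1/2$, and $|\Lambda_r'|\asymp r^d$, this gives
\[
\frac{1}{|\Lambda_r'|}\sum_{x\in\Lambda_r'}\bP_\beta(0\leftrightarrow x)\ \leq\ C_1\,A^{2/(1+\theta)}\,r^{-2d\theta/(1+\theta)}.
\]
Next I would apply \cref{thm:two_ghost_S} to Bernoulli-$\beta$ percolation on $\Z^d$ with $\Gamma=\Z^d$ acting by translations, which is unimodular. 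For the ``long edge'' $e=\{0,x\}$ (present since $J>0$) the event $\sS_{e,n}'$ agrees $\bP_\beta$-a.s.\ with the event $\sS_{x,n}'$ that $0$ and $x$ lie in distinct clusters each of size at least $n$, and $\bp_e/(1-\bp_e)=e^{\beta J(x)}-1$, so \eqref{eq:improved_two_ghost} yields
\[
\sum_{x\in\Z^d}\bigl(e^{\beta J(x)}-1\bigr)\,\bP_\beta(\sS_{x,n}')^2\ \leq\ \frac{10000\,A^2}{(1-2\theta)^2\,n^{1+2\theta}}.
\]
A Cauchy--Schwarz step then controls the corresponding linear sum: using $e^{\beta J(x)}-1\geq\beta J(x)\geq\tfrac{c}{2}\|x\|_1^{-d-\alpha}$ for $\|x\|_1\geq r_0$ --- here is where $\beta\geq1/2$ and the polynomial lower bound on $J$ enter --- together with $\sum_{x\in\Lambda_r'}\|x\|_1^{d+\alpha}\le C_2\,r^{2d+\alpha}$, I would get
\[
\frac{1}{|\Lambda_r'|}\sum_{x\in\Lambda_r'}\bP_\beta(\sS_{x,n}')\ \leq\ \frac{C_3\,A}{1-2\theta}\,r^{\alpha/2}\,n^{-(1+2\theta)/2}.
\]

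To put these together, note by Harris--FKG and a union bound that $\bP_\beta(\sS_{x,n}')\geq\bP_\beta(|K_0|\geq n)^2-\bP_\beta(0\leftrightarrow x)$; averaging over $x\in\Lambda_r'$ and substituting the two displays gives, for all $r\geq2r_0$ and $n\geq1$,
\[
\bP_\beta(|K_0|\geq n)^2\ \leq\ C_1\,A^{2/(1+\theta)}\,r^{-2d\theta/(1+\theta)}+\frac{C_3\,A}{1-2\theta}\,r^{\alpha/2}\,n^{-(1+2\theta)/2}.
\]
I would then optimise over $r$, choosing it to balance the two terms (and using the trivial bound $\bP_\beta(|K_0|\geq n)\leq1$, after enlarging $C$, for the bounded range of $n$ where this $r$ would drop below $2r_0$). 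Balancing fixes $r$ as a function of $n$ and $A$, and the power of $n$ in the resulting common bound equals $-2\theta$ \emph{precisely} when $\theta=(d-\alpha)/(2d+\alpha)$ --- this is how the value of $\theta$ in the statement is arrived at. For that $\theta$ the right-hand side is at most $C\,A^{2/(1+a)}\,n^{-2\theta}$, where the exponent $1/(1+a)$ appearing in the statement is strictly less than $1$ because $\alpha<d$; taking square roots yields the claimed implication $\bP_\beta(|K_0|\geq n)\leq C\,A^{1/(1+a)}\,n^{-\theta}$, with $C$ depending only on $d,\alpha,c$ and in particular uniform in $\beta$. The $\beta<1/2$ case was already disposed of.

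The step I expect to be the main obstacle is extracting the two-ghost contribution: \cref{thm:two_ghost_S} only bounds the \emph{sum of squares} $\sum_{e\in E^\rightarrow_o}\bE_\beta[\mathbbm1(\sS_{e,n}')\sqrt{\bp_e/(1-\bp_e)}]^2$, so recovering the \emph{linear} sum $\sum_{x\in\Lambda_r'}\bP_\beta(\sS_{x,n}')$ needed for the Harris--FKG step costs a Cauchy--Schwarz factor $\bigl(\sum_{x\in\Lambda_r'}(e^{\beta J(x)}-1)^{-1}\bigr)^{1/2}$. Pinning down the sharp $r^{\alpha/2}$ growth of this factor while keeping all constants uniform over $\beta\in[1/2,\beta_c)$ is the delicate part, and it is exactly this factor --- weighed against the $r^{-2d\theta/(1+\theta)}$ gain from \cref{thm:hyperscalingsimple} --- that forces both the value of $\theta$ and the exponent of $A$. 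Everything else is the bookkeeping of \cref{prop:weak} redone with the sharper inputs.
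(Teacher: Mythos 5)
Your proposal follows essentially the same route as the paper's own proof of \cref{lemma:bootstrap_main}: rescale so $\sum J=1$, dispose of $\beta<1/2$ trivially, then bootstrap for $\beta\in[1/2,\beta_c)$ by combining \cref{thm:hyperscalingsimple} applied to $\Lambda'_r$ with the improved two-ghost inequality \cref{thm:two_ghost_S} via the Cauchy--Schwarz step, the Harris--FKG plus union bound inequality $\bP_\beta(\sS'_{x,n})\geq\bP_\beta(|K_0|\geq n)^2-\bP_\beta(0\leftrightarrow x)$, averaging over $\Lambda'_r$, and optimising $r\asymp n^{(1-2\theta)/\alpha}$ so that $\theta=(d-\alpha)/(2d+\alpha)$ makes the two powers of $n$ agree. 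One small remark: in the paper's own write-up of the step after \cref{thm:two_ghost_S}, the right-hand side is written $C_1An^{-(1+2\theta)}$ rather than $C_1A^2n^{-(1+2\theta)}$ (since \eqref{eq:improved_two_ghost} carries $A^2$), so the Cauchy--Schwarz step there shows $A^{1/2}$ where your version shows the correct $A$; this does not affect the conclusion, because $A\leq A^{2/(1+\theta)}$ for $A\geq1$, so the $A^{2/(1+\theta)}$ term dominates in either case and the square root still gives the claimed exponent $1/(1+\theta)$ (the $a$ in the lemma's displayed implication is a typo for $\theta$). You have correctly identified the balancing act that fixes both $\theta$ and the exponent of $A$, and the place where $\beta\geq1/2$ and the lower bound on $J$ enter.
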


\begin{proof}[Proof of \cref{lemma:bootstrap_main}]
By rescaling if necessary, we may assume without loss of generality that $\sum_{e\in E^\rightarrow_o} J_e =1$. Fix  $0 \leq \beta <\beta_c$ and suppose that $1 \leq A <\infty$ is such that $\bP_\beta(|K|\geq n) \leq A n^{-\theta}$ for every $n\geq 1$, where $\theta=(d-\alpha)/(2d+\alpha) <1/2$. 
We wish to prove that there exists a constant $C$ that may depend on $d,$ $\alpha$, $c$, and $r_0$ but not on the choice of $1\leq A < \infty$ or $0\leq \beta <\beta_c$ such that
\[
\bP_\beta(|K|\geq n) \leq C A^{1/(1+\theta)} n^{-\theta}
\]
for every $n\geq 1$. If $\beta \leq 1/2$ then a standard path-counting argument implies that $\bE_\beta |K_o| \leq 2$, so that the claim holds trivially  in this case by Markov's inequality provided that we take $C\geq 2$. We may therefore assume that $\beta \geq 1/2$ for the remainder of the proof.

All the constants appearing in the remainder of the proof may depend on  $d,$ $\alpha$, $c$, and $r_0$ but not on the choice of $1\leq A < \infty$ or $1/2\leq \beta <\beta_c$. 
For each $x\in \Z^d$ and $n\geq 1$, let $\sS_{x,n}'$ be the event that $0$ and $x$ belong to distinct clusters each of which contains at least $n$ vertices; both clusters are automatically finite since $\beta < \beta_c$.
Since $\theta<1/2$, we have by \cref{thm:two_ghost_S} that
there exists a constant $C_1$ such that
\[
\sum_{x \in \Z^d} (e^{\beta J_x}-1) \bP_\beta(\sS_{x,n}')^2 \leq C_1 A n^{-(1+2\theta)}
\]
for every $n \geq 1$. Let $\Lambda'_r=\Lambda_r \setminus \Lambda_{r_0-1}$ for each $r\geq r_0$. It follows by Cauchy-Schwarz that there exists a constant $C_2$ such that
\begin{align}
\hspace{-0.5em}\sum_{x \in \Lambda_r'} \bP_\beta(\sS_{x,n}') &\leq \left[\sum_{x \in \Lambda_r'} (e^{\beta J_x}-1) \bP_\beta(\sS_{x,n}')^2\right]^{1/2}\left[\sum_{x \in \Lambda_r'} \frac{1}{e^{\beta J_x}-1}\right]^{1/2} 
\nonumber
\\
&\leq C_1^{1/2} A^{1/2} n^{-(1+2\theta)/2} \left( \frac{1}{c\beta r^{-d-\alpha}} |\Lambda_r'| \right)^{1/2} 
\leq C_2 A^{1/2} n^{-(1+2\theta)/2} r^{\alpha/2} |\Lambda_r| 
\label{eq:two_ghost_massage}
\end{align}
for every $r\geq r_0$, where we used the inequality $e^x-1 \geq x$ in the first inequality on the second line.
 On the other hand, since $\theta<1/2$,
 it follows immediately from \cref{thm:hyperscalingsimple} that there exists a constant $C_3$ such that
\begin{equation}
\frac{1}{|\Lambda_r'|}\sum_{x\in \Lambda_r} \bP_\beta(0 \leftrightarrow x) \leq C_3 A^{2/(1+\theta)} |\Lambda_r'|^{-2\theta/(1+\theta)} \leq C_3 A^{2/(1+\theta)} r^{-2\theta d/(1+\theta)} 
\label{eq:hyperscaling_massage}
\end{equation}
for every $r\geq r_0$. 
We now apply these two bounds to obtain a new bound on $\bP_\beta(|K_0|\geq n)$. We have by a union bound and the Harris-FKG inequality that
\[
\bP_\beta(\sS_{x,n}') \geq \bP_\beta(|K_0|\geq n,|K_x|\geq n)-\bP_\beta(0 \leftrightarrow x) \geq \bP_\beta(|K_0|\geq n)^2-\bP_\beta(0 \leftrightarrow x).
\]
for each $x\in \Z^d$ and $n\geq 1$. Rearranging and averaging over $x\in \Lambda_r$, it follows that
\begin{align}
\bP_\beta(|K_0|\geq n)^2 &\leq 
\frac{1}{|\Lambda_r'|}\sum_{x\in \Lambda_r'}\bP_\beta(\sS_{x,n}')+
\frac{1}{|\Lambda_r'|}\sum_{x\in \Lambda_r'} \bP_\beta(0 \leftrightarrow x)
\\
&\leq  C_2 A^{1/2} r^{\alpha/2} n^{-(1+2\theta)/2}+C_3 A^{2/(1+\theta)} r^{-2d\theta/(1+\theta)}
\label{eq:nearlythere}
\end{align}
for every $r\geq r_0$ and $n\geq 1$. 
Taking 
$r=r_0 \vee \left\lceil n^{(1-2\theta)/\alpha }\right \rceil$ yields that there exists a constant $C_4$ such that
\begin{align}
\bP_\beta(|K_0|\geq n)^2 
&\leq C_4 \left( A^{1/2} n^{-2\theta}+ A^{2/(1+\theta)} n^{-2d\theta(1-2\theta)/(\alpha+\alpha \theta)}  \right)
\label{eq:nearly_done}
\end{align}
for every $n \geq 1$. Since $\theta=(d-\alpha)/(2d+\alpha)$, the two powers of $n$ appearing in this expression and equal.
Since we also have that $A^{1/2} \leq A^{2/(1+\theta)}$, it follows by taking square roots on both sides of \eqref{eq:nearly_done} that
$\bP_\beta(|K_0|\geq n)
\leq \sqrt{2C_4} A^{1/(1+\theta)} n^{-\theta}$
for every $n\geq 1$. This completes the proof.
\end{proof}

\begin{proof}[Proof of \cref{thm:main}]
We follow the same argument  used to deduce  \cref{prop:weak} from the implication \eqref{eq:simple_bootstrap_claim}; we include the details again here for ease of reading. Let $\theta=(d-\alpha)/(2d+\alpha)<1/2$. 
For each $0 \leq \beta < \beta_c$, we have by sharpness of the phase transition \cite{aizenman1987sharpness,duminil2015new} that $|K_0|$ has finite mean, and in particular that there exists $1 \leq A < \infty$ such that $\bP_\beta(|K_0|\geq n) \leq A n^{-\theta}$ for every $n\geq 1$. For each $0\leq \beta < \beta_c$ we may therefore define
\[
A_\beta = \min\bigl\{1 \leq A < \infty : \bP_\beta(|K_0|\geq n) \leq A n^{-\theta} \text{ for every $n\geq 1$}\bigr\} < \infty.
\]
Observe that the set we are minimizing over is closed, so that $\bP_\beta(|K_0|\geq n) \leq A_\beta n^{-\theta}$ for every $n\geq 1$ and $0 \leq \beta < \beta_c$.
\cref{lemma:bootstrap_main} implies that there exists a constant $C=C(d,\alpha,c,r_0)$ such that 
$A_\beta \leq C A_\beta^{1/(1+\theta)}$
for every $0 \leq \beta < \beta_c$. Since $A_\beta$ is finite for every $0\leq \beta <\beta_c$ we may safely rearrange this inequality to obtain that $A_\beta \leq C^{(1+\theta)/\theta}$ for every $0\leq \beta <\beta_c$ and hence that
\[
\bP_\beta(|K_0|\geq n) \leq C^{(1+\theta)/\theta} n^{-\theta}
\]
for every $0 \leq \beta < \beta_c$ and $n\geq 1$. This implies in particular that $\beta_c<\infty$. Considering the standard monotone coupling of $\bP_\beta$ and $\bP_{\beta_c}$ for $\beta \leq \beta_c$ and taking limits as $\beta \uparrow \beta_c$, it follows that the same estimate holds for all $0\leq \beta\leq\beta_c$ as claimed.  The claimed bound on the averaged two-point function $|\Lambda_r|^{-1}\sum_{x\in |\Lambda_r|} \bP_\beta(0\leftrightarrow x)$ follows immediately from the bound $\bP_\beta(|K_0|\geq n) \leq C^{(1+\theta)/\theta} n^{-\theta}$ together with \cref{thm:hyperscaling}.
\end{proof}

\begin{proof}[Proof of \cref{thm:general}]
This proof is very similar to that of \cref{thm:main}, and we will omit most the details.
As before, we may assume without loss of generality that $\sum_{e\in E^\rightarrow_o} J_e =1$. The analogue of \cref{lemma:bootstrap_main} is as follows: Let $\theta=(2a-1)/(a+1)<1/2$. Then there exists a constant $C$ such that the implication
\begin{multline}\text{$\Bigl(\bP_\beta(|K|\geq n) \leq A n^{-\theta}$ for every $n\geq 1\Bigr)$}\\ \Rightarrow  \text{$\Bigl(\bP_\beta(|K|\geq n) \leq C A^{1/(1+\theta)} n^{-\theta}$ for every $n\geq 1\Bigr)$}
\label{eq:general_bootstrap}
\end{multline}
holds for every $1 \leq A < \infty$ and $0 \leq \beta < \beta_c$. This will be proven via essentially the same argument as above but where we replace the set $\Lambda_r'$ with the analogous set $\Lambda_\eps=\{x \in V : \{o,x\} \in E, J_{\{o,x\}} \geq \eps \}$, which satisfies $|\Lambda_\eps| \geq c\eps^{-a}$ for every $0<\eps\leq \eps_0$ by assumption. As before, it suffices to consider the case that $\beta \geq 1/2$. Fix $1/2 \leq \beta <\beta_c$ and $1 \leq A < \infty$ and suppose that $\bP_\beta(|K|\geq n) \leq A n^{-\theta}$ for every $n\geq 1$. The derivations of \eqref{eq:two_ghost_massage} and \eqref{eq:hyperscaling_massage} from \cref{thm:two_ghost_S} and \cref{thm:hyperscalingsimple} yield in this context that there exist constants $C_1$, $C_2$, and $C_3$ such that
\begin{align}
&&\frac{1}{|\Lambda_\eps|}\sum_{x \in \Lambda_\eps} \bP_\beta(\sS'_{x,n}) &\leq C_1 A^{1/2}n^{-(1+2\theta)/2} \eps^{-(1-a)/2} 
\\
\text{and} && 
\frac{1}{|\Lambda_\eps|}\sum_{x\in \Lambda_r} \bP_\beta(0 \leftrightarrow x) &\leq C_2 A^{2/(1+\theta)} |\Lambda_\eps|^{-2\theta/(1+\theta)} \leq C_3 A^{2/(1+\theta)} \eps^{2a\theta /(1+\theta)}
\end{align}
for every $0<\eps \leq \eps_0$ and $n\geq 1$.
The same union bound and Harris-FKG argument used to derive \eqref{eq:nearlythere} then yields that
\[
\bP_\beta(|K_o| \geq n)^2 \leq C_1 A^{1/2}n^{-(1+2\theta)/2} \eps^{-1/2} + C_3 A^{2/(1+\theta)} \eps^{2a\theta/(1+\theta)}
\]
for every $0<\eps \leq \eps_0$ and $n\geq 1$. Taking $\eps=\eps_0 \wedge n^{-(1-2\theta)/(1-a)}$ implies that there exists a constant $C_4$ such that
\[
\bP_\beta(|K_o| \geq n)^2 \leq C_4 A^{1/2} n^{-2\theta} + C_4 A^{2/(1+\theta)} n^{-2a\theta (1-2\theta)/((1-a)(1+\theta))}
\]
for every $n\geq 1$. As before, the definition of $\theta$ is chosen such that these two powers of $n$ are equal, and we obtain that 
$\bP_\beta(|K_o|\geq n) \leq \sqrt{2C_4} A^{1/(1+\theta)} n^{-\theta}$
for every $n\geq 1$.  This completes the proof of the implication \eqref{eq:general_bootstrap}. The derivation of \cref{thm:general} from the implication \eqref{eq:general_bootstrap} is identical to the derivation of \cref{thm:main} from \cref{lemma:bootstrap_main} and is omitted.
\end{proof}


\paragraph{Acknowledgments.}
We thank Jonathan Hermon for his careful reading of an earlier version of this manuscript, and thank Gordon Slade for helpful discussions on the physics literature. We also thank the anonymous referee for their helpful comments and corrections.

 \setstretch{1}
 \footnotesize{
  \bibliographystyle{abbrv}
  \bibliography{unimodularthesis.bib}
  }

   \emph{Data sharing not applicable to this article as no datasets were generated or analysed during the current study.}

\end{document}